\documentclass[a4paper,twoside]{article}

\usepackage{a4}
\usepackage[left=2.5cm,right=2.0cm,top=1.5cm,bottom=1cm,includeheadfoot]{geometry} 

\usepackage[ngerman, english]{babel}

\usepackage{lmodern}
\usepackage[T1]{fontenc}
\usepackage{textcomp}

\usepackage[utf8]{inputenc}

\usepackage{autobreak}
\usepackage{amsfonts}
\usepackage{amstext}
\usepackage{amssymb}
\usepackage{amsmath}
\usepackage{amsthm}
\usepackage{mathrsfs} 
\usepackage{wasysym}
\usepackage{mathtools}
\usepackage{appendix}

\usepackage{graphicx}
\usepackage{tikz}
\usepackage{tikz-cd}
\usepackage{tikz-3dplot}
\usetikzlibrary{matrix, calc, arrows,intersections,shapes.arrows}
\usepackage{pgfplots}
\pgfplotsset{
  compat=1.12,
  axis lines=middle
}

\usepackage{listings}
\usepackage{xcolor}

\usepackage{pdfpages}
\usepackage[pdfborder={0 0 0},colorlinks=false]{hyperref}
\usepackage[capitalize,nameinlink,noabbrev]{cleveref}

\usepackage[babel,german=quotes]{csquotes}
\usepackage[backend=biber, style=numeric]{biblatex}
\renewbibmacro*{in:}{}

\DeclareFieldFormat[article]{title}{\mkbibemph{#1}}
\DeclareFieldFormat[article]{journaltitle}{#1}
\DeclareFieldFormat[article]{pages}{#1}

\AtEveryBibitem{%
  \clearfield{year}%
  \clearfield{date}%
}

\renewbibmacro*{date}{}

\renewbibmacro*{issue+date}{}

\DeclareFieldFormat[article]{volume}{\mkbibbold{#1}}
\DeclareFieldFormat[article]{number}{\mkbibbold{#1}}

\makeatletter
\newtheoremstyle{theorem}
  {}
  {}
  {\itshape}
  {}
  {\bfseries}
  {.}
  { }
  {\thm@notefont{\bfseries}\thmhead@plain{#1}{#2}{#3}}
  
\newtheoremstyle{definition}
  {}
  {}
  {}
  {}
  {\bfseries}
  {.}
  { }
  {\thm@notefont{\bfseries}\thmhead@plain{#1}{#2}{#3}}  
\makeatother

\theoremstyle{definition}
\newtheorem{definition}{Definition}[section]
\crefname{definition}{Definition}{Definitions}

\crefname{assumption}{Assumption}{Assumptions}

\theoremstyle{theorem}
\newtheorem{theorem}[definition]{Theorem}
\crefname{theorem}{Theorem}{Theorems}
\newtheorem{theoremx}{Theorem}
 
\newtheorem{prop}[definition]{Proposition}
\crefname{prop}{Proposition}{Propositions}
\newtheorem{lemma}[definition]{Lemma}
\crefname{lemma}{Lemma}{Lemmas}
\newtheorem{corollary}[definition]{Corollary}
\crefname{corollary}{Corollary}{Corollaries}

\usepackage{enumitem}

\newcommand{\comment}[1]{}

\newcommand{\IN}{\ensuremath{\mathbb{N}}}
\newcommand{\IZ}{\ensuremath{\mathbb{Z}}}
\newcommand{\IQ}{\ensuremath{\mathbb{Q}}}
\newcommand{\IR}{\ensuremath{\mathbb{R}}}
\newcommand{\IC}{\ensuremath{\mathbb{C}}}
\newcommand{\IS}{\ensuremath{\mathbb{S}}}

\DeclareMathOperator{\Hom}{Hom}

\DeclareMathOperator{\Mat}{Mat}
\DeclareMathOperator{\GL}{GL}
\DeclareMathOperator{\coker}{coker}

 {}  
 {}  

\DeclareMathOperator{\id}{id} 		
\DeclareMathOperator{\im}{im} 		

\DeclareMathOperator{\Hess}{Hess} 			
\DeclareMathOperator{\ind}{ind} 			

\let\originalleft\left
\let\originalright\right
\renewcommand{\left}{\mathopen{}\mathclose\bgroup\originalleft}
\renewcommand{\right}{\aftergroup\egroup\originalright}

\begin{document}

\title{Isotropy sets of periodic geodesic flows on \\three-dimensional manifolds}
\author{Jakob Dittmer}
\date{}

\maketitle



\begin{abstract}
\noindent

A classical theorem of Wadsley states that if all geodesics on a Riemannian manifold $M$ are periodic, then they have a common period, and the geodesic flow induces an $\IS^1$-action on the unit tangent bundle. Moreover, a conjecture of Berger asserts that this action is free whenever $M$ is simply connected.\\
In this paper, we investigate the Berger conjecture in dimension three by studying the isotropy set of the $\IS^1$-action induced by the geodesic flow. Using Morse theory for the energy functional on the free loop space of $M$, we establish two results in this direction. First, we show that all isotropy groups of the geodesic flow have odd order. Second, we prove that the geodesic flow either acts freely on the unit tangent bundle $T^1M$ or has at least two singular orbit types.\\
This paper serves as preparation for the subsequent paper The Berger conjecture for three-dimensional manifolds, in which the conjecture is proved in dimension three.
 
\end{abstract}

\tableofcontents

\section{Introduction}

Given a Riemannian manifold $(M,\langle \cdot,\cdot\rangle )$, a geodesic $\gamma:I\rightarrow M$ defined on some interval $I\subset \IR$ is called a closed geodesic if it extends to a periodic map $\gamma:\IR \rightarrow M$. If $\gamma$ is unit speed, then its shortest period is called the length of $\gamma$. Manifolds all of whose geodesics are closed have been objects of special interest for quite a while, mainly since Zoll and Tannery discovered metrics with this property on the sphere that are different from the round metric. The most common examples of manifolds all of whose geodesic are closed are compact symmetric Spaces of rank $1$, the so-called CROSSes, namely the spheres equipped with the round metric, the real, complex and quaternionic projective spaces equipped with their Fubini-Study metric and the Cayley plane. A theorem of Wadsley says that if $(M,\langle \cdot, \cdot\rangle)$ is a manifold all of whose geodesics are closed, then all geodesics have a common period. This implies that the geodesic flow induces a smooth $\IS^1$-action on the unit tangent bundle $T^1M$ of $M$. 
A conjecture of Berger states that on a simply connected manifold with this property all unit speed geodesics have the same minimal period. This means that the least common period that exists by Wadsley's theorem is also the smallest period of each single unit speed geodesic. This is the case if and only if the $\IS^1$-action of the geodesic flow on the unit tangent bundle is free. 
The conjecture was proven to be true for two-dimensional manifolds by Gromoll and Grove in \cite{gromoll1981metrics}. Marco Radeschi and Burkhard Wilking were the first to use Morse theory on the free loop space to attack the conjecture. In \cite{radeschi2017berger}, they showed that the conjecture also holds for topological spheres of dimension greater than or equal to four. In dimension three, the very same approach does not directly lead to a proof of the full Berger conjecture, but John Olsen was able to use it to show that the isotropy set of the geodesic flow consists of a disjoint union of finitely many three-dimensional connected manifolds with fixed isotropy group, each of which is diffeomorphic to a lens space whose fundamental group has even order. In this paper we will sharpen John Olsen's results about the isotropy set of the geodesic flow. First we show that the order of the isotropy group of each component of the isotropy set of the geodesic flow is odd.  

\begin{theoremx}\label{isoorder}
Let $(M,\langle \cdot,\cdot\rangle)$ be a simply connected three dimensional Riemannian manifold with all geodesics closed. Then all the isotropy groups of the geodesic flow have odd order.
\end{theoremx}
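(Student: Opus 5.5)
We argue by contradiction and suppose some isotropy group has even order. By Wadsley's theorem, normalize the common period to $2\pi$, so the geodesic flow gives an $\IS^1=\IR/2\pi\IZ$-action on $T^1M$. The isotropy group of a unit vector $v$ is $\IZ_k$ with $k$ the number of times the geodesic $\gamma_v$ closes up in time $2\pi$. Since $\IZ_k\supset\IZ_2$ whenever $k$ is even, an even-order isotropy group yields a nonempty fixed point set $B$ of the subgroup $\IZ_2$. $B$ consists exactly of those $v$ whose geodesic has length dividing $\pi$.

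By Olsen's results quoted in the introduction, $B$ is a union of components of the isotropy set. Each such component is a closed three-dimensional submanifold of $T^1M$ and is diffeomorphic to a lens space. Because $T^1M$ is a connected five-manifold, $B$ is a proper closed submanifold of codimension two, and it is invariant under the flip $v\mapsto -v$.

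The main tool will be Morse--Bott theory for the energy functional $E$ on the free loop space $\Lambda M$, following Radeschi--Wilking. The critical manifolds are the constant loops $M$ and the sets $\Sigma_m$ of closed geodesics of parameter length $2\pi$ traversed with $m$ ``turns''. For $m=1$ the critical set $\Sigma_1\cong T^1M$ contains the loops with shorter period, and at those loops the nullity jumps. The plan has four steps.

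\textbf{Step 1 (indices).} For generic $v$, i.e.\ free orbits, compute the Morse index of $\gamma_v$ over one full period. By Bott's iteration formula and the fact that all geodesics close (so the Poincaré map is the identity, as in the Gromoll--Grove and Radeschi--Wilking setup), we obtain that $\operatorname{ind}(\gamma_v)$ is constant on $T^1M$. Comparing with $H_*(\Lambda M)$, which is known since $M\simeq S^3$, gives the value $\operatorname{ind}=2$ for the first critical level.

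\textbf{Step 2 (Bott iteration on $B$).} For $v\in B$, view $\gamma_v$ as the double iterate of a closed geodesic $c$ of period $\pi$. Bott's formula expresses $\operatorname{ind}(c^2)$ as $\operatorname{ind}(c)$ plus a contribution from the eigenvalue $-1$ of the linearized Poincaré map. This contribution is exactly the jump in nullity, i.e.\ the codimension of $B$ in $T^1M$, which is $2$. Hence $2\operatorname{ind}(c)+\text{(rotation term)} = 2$ holds with the rotation term even.

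\textbf{Step 3 (parity obstruction).} Apply the same reasoning to $c$ itself as a critical point at energy level $\pi^2/2$. Since $\IZ_2$ acts by the flip on the normal bundle of $B$, an orientation (Bott orientability) argument shows that the negative bundle over the critical manifold $B$ must be orientable. A nonorientable negative bundle would produce $\IZ_2$-homology in odd degree that has to cancel, and the homology $H_*(\Lambda S^3;\IZ)$ is torsion-free in the relevant low degrees. I expect this step to be the main obstacle: the negative bundle over $B$ of $c$ is identified with the $(-1)$-eigenbundle of the Poincaré map. Over a lens space of even fundamental group order, the monodromy of this eigenbundle is the generator acting by $-1$, which makes it nonorientable.

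\textbf{Step 4 (conclusion).} In the spectral sequence of the energy filtration, a nonorientable negative bundle over $B$ contributes a torsion class. Concretely, $H_*(B;\mathcal O)$ with local coefficients has $\IZ_2$ in low degree. This class cannot be killed by the neighboring critical levels, whose indices are determined in Steps 1 and 2. It therefore gives $2$-torsion in $H_*(\Lambda M;\IZ)$ in degree $\operatorname{ind}(c)+1$, contradicting torsion-freeness of $H_*(\Lambda S^3;\IZ)$ in that degree. Hence $B=\emptyset$, and all isotropy groups have odd order.
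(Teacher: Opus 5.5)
\textbf{There is a genuine gap: the contradiction in Step~3 never materializes, and Step~4 cannot work in any case.} Step~3 is the only place where a contradiction is supposed to arise. The negative bundle of $c$ has rank $\ind(c)$, and it is not the $(-1)$-eigenbundle of the linearized Poincar\'e map $P$. Because all geodesics have a common period, $P^2=\mathrm{id}$, so the $(-1)$-eigenbundle of $P$ over $B$ is just the normal bundle of $B$ in $T^1M$. That bundle has rank two and is orientable, since both $B$ and $T^1M$ are. The generator of $\IZ/2\IZ\subset\IS^1$ (which is not the flip $v\mapsto -v$) acts on it by $-\mathrm{id}$ on a $2$-plane, and this preserves orientation.

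More fundamentally, the negative bundles of all critical manifolds are known to be orientable here (Radeschi--Wilking, Olsen; the paper uses this throughout). Your own Bott bookkeeping already shows this. By Wilking every index is even, so the contribution $I(-1)=\ind(c^2)-\ind(c)$ is even. Hence the deck transformation acts on the negative space of $c^2$ with determinant $(-1)^{I(-1)}=1$. So there is no non-orientable bundle and no contradiction.

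Steps~1--2 are also off:
\begin{itemize}
\item $E$ is Morse--Bott, so the nullity is constant (equal to $5$) along $T^1M$. The nullity does not jump inside $T^1M$.
\item The shorter geodesics form separate critical manifolds of lower energy. Once $B\neq\emptyset$, the regular critical manifold is not the lowest one: in the paper's notation its index is $2i_0\geq 4$.
\item Therefore your relation $2\ind(c)+(\text{rotation term})=2$ rests on the false value $\ind(c^2)=2$.
\end{itemize}

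Step~4 could not work even with a correct Step~3. Every singular critical manifold is a lens space with $H^2\cong\IZ/\#\pi_1\IZ$ of even order (Olsen), and its negative bundle is orientable. So $2$-torsion enters the Morse--Bott spectral sequence at every singular level regardless, and it is always cancelled by the adjacent level (see Corollary~\ref{propbasicisos} and Lemma~\ref{lemmadegfundcorrespondence}). Torsion-freeness of $H^*(\Lambda\IS^3;\IZ)$ therefore says nothing about the parity of the isotropy orders. The reasoning ``this torsion cannot be killed by neighbouring levels'' would, applied to those $\IZ/\#\pi_1\IZ$ classes, exclude singular critical manifolds altogether.

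The missing ingredient is $\IS^1$-equivariant cohomology with $\IZ/2\IZ$ coefficients, which does see isotropy. The paper argues as follows:
\begin{enumerate}
\item It counts the singular levels below the first regular one: $i_0-1=\sum_i(q_i-1)$.
\item At most one $q_i$ is even, since two even ones would give two distinct singular critical manifolds at the same energy (Proposition~\ref{propatmostoneevenisoorder}).
\item If $i_0$ is even, then some even-isotropy component $\tilde K_j^{q_j}\subset K_{i_0}$ forces $H^3_{\IS^1}(K_{i_0};\IZ/2\IZ)\neq0$ (Lemma~\ref{lemregularleveltwotorsion}).
\item This is refuted using the B\"okstedt--Ottosen Poincar\'e series of $H^*_{\IS^1}(\Lambda\IS^3;\IZ/2\IZ)$ together with $H^{2i_0}(\Lambda^{i_0-1};\IZ)=0$ (Proposition~\ref{propoddisotropyorder}).
\end{enumerate}
Hence $i_0$ is odd, so the number of even $q_i$ is even; being at most one, it is zero.
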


Apart from this we have the following result about the number of connected components of the isotropy set of the geodesic flow.

\begin{theoremx} \label{thmoneorbittype}
Let $(M,\langle \cdot,\cdot \rangle)$ be a three dimensional Riemannian manifold all of whose geodesics are closed. Then the $\IS^1$-action induced by the geodesic flow is either free or its isotropy set has at least two connected components.
\end{theoremx}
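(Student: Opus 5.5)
We argue by contradiction. Assume the action is not free and its isotropy set $E\subset T^1M$ is connected. By Olsen's results, $E$ is a three-dimensional closed submanifold with constant isotropy group $\IZ_q$, $q\ge 2$, and $E$ is a lens space. First we reduce to the simply connected case: lifting to the universal cover $\tilde M$ (compact, since all geodesics are closed) gives a manifold with all geodesics closed, and the isotropy set upstairs is the preimage of $E$. Alternatively we run the argument on $M$ directly, working with one free homotopy class of loops at a time. We then describe the geometry. Away from $E$, the geodesics have the common minimal period $L$. Along $E$ they have minimal period $L/q$, and these shorter closed geodesics form the critical manifold $E/\IS^1$ of length $L/q$ in the free loop space $\Lambda M$. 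Its $\IS^1$-quotient $E/\IS^1$ is a closed surface $\Sigma$, an orbifold-free quotient because the isotropy is constant on $E$.

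Next we set up Morse--Bott theory for the energy functional on $\Lambda M$, or on its $\IS^1$-equivariant version. The critical manifolds are:
\begin{itemize}
\item the constant loops $M$;
\item for each multiplicity $k$, the $k$-fold covers of $T^1M$ of period $kL$;
\item the $k$-fold iterates of the short geodesics in $E$, at lengths $kL/q$ with $q\nmid k$.
\end{itemize}
The critical manifolds at lengths $kL$ are all of $T^1M$, and $E$ sits inside them as the $q$-fold iterate. We compute the indices using the Bott iteration formula together with the fact that the full family is a Morse--Bott nondegenerate manifold of dimension $2n-1=5$ (Wadsley/Bott--Samelson type index computations). The expected output is:
\begin{itemize}
\item the index of $T^1M$ at level $kL$ is $\lambda_k=k\cdot(\text{const})+\text{const}$, with nullity $2$ on the generic part;
\item at $E$-iterates the index is determined by a rotation number $\theta$ of the linearized Poincaré map;
\item since $E$ is the whole isotropy set, consistency between the short family and the full family at level $L$ forces $\theta$ to be of the form $2\pi p/q$.
\end{itemize}

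The contradiction comes from comparing homology. The homology of $\Lambda M$ (or $\Lambda\tilde M$ for $\tilde M\simeq S^3$) is known, in particular its ranks in each degree, for instance via the Ziller/Sullivan computation for $S^3$. We use $\IZ_2$ or rational coefficients, and we may assume perfectness of the Morse--Bott decomposition, as in Radeschi--Wilking and Olsen, by using the fact that the relevant critical manifolds have orientable negative bundles. The key quantitative step is as follows. The critical manifold $\Sigma$, or $E$, of iterated short geodesics contributes homology in degrees $\ind(kL/q)+\{0,1,2,3\}$. These degrees are prescribed by $\theta$ and lie strictly between the bands coming from the $T^1M$ levels. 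With a single component, the Euler characteristic and total rank count in a fixed degree window (say up to the first level $L$) do not match the known Betti numbers of $\Lambda S^3$. In that window $\Lambda S^3$ has rank one in each of degrees $0,2,3,4,5,\ldots$ in a fixed pattern. A single extra lens-space critical manifold adds a block of rank $(1,1,1,1)$ with $\IZ_2$-coefficients (or $(1,0,0,1)$ rationally) that cannot be cancelled. A second component would be needed to cancel it through a nontrivial boundary map.

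The main obstacle is the index/degree bookkeeping. We must show that the extra homology from one component cannot be absorbed by the cancellation the paper's spectral sequence permits. I would try first to exploit the symmetry $\theta\mapsto 2\pi-\theta$ under orientation reversal of geodesics ($v\mapsto -v$ maps $E$ to itself). This should show that the contributions come in pairs with indices $\ind$ and $\ind'$ that differ unless $\theta=\pi$. The case $\theta=\pi$ would force $q=2$, which Theorem~\ref{isoorder} excludes in the simply connected case. So a single component cannot be self-paired, and a second component is needed, namely the image $-E$, giving the contradiction with connectedness.
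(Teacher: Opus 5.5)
There is a genuine gap: nothing in your proposal actually produces a contradiction.

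\textbf{The orientation-reversal step fails.} The map $c\mapsto\bar c$, $\bar c(t)=c(-t)$, is an isometry of $\Lambda M$. It preserves the energy functional and the index, and via $v\mapsto -v$ it maps the isotropy set to itself. The symmetry $\theta\mapsto 2\pi-\theta$ is automatic for the spectrum of a symplectic Poincar\'e map. So $\bar c$ always has the same index as $c$, and under your hypothesis $-E=E$ for your isotropy set $E$. In fact, for simply connected $M$ every critical manifold is reversal-invariant, because each $K_i$ is connected (Corollary \ref{corconnectedcritsets}). Hence every component satisfies $-C_j=C_j$, and \emph{$-E$ is a second component} merely restates what you want to prove.

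\textbf{The rank count fails as well.} Rationally the energy functional is perfect (Lemma \ref{lemrationalperfectness}). Each singular level contributes $\IQ$ exactly in degrees $\ind(K_i)$ and $\ind(K_i)+3$, which matches the Betti numbers of $\Lambda\IS^3$. With $\IZ/2\IZ$ coefficients the energy is not perfect: the surplus classes coming from the even torsion of the lens spaces are cancelled by nonzero connecting maps between consecutive levels (compare Lemma \ref{lemmadegfundcorrespondence}). Both counts see only the number of singular levels, not the number of components, so neither can distinguish one component from two.

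\textbf{The reduction to the simply connected case is also incorrect.} A lift of a closed geodesic closes only after the order of the corresponding deck transformation, so the isotropy set upstairs is not the preimage of $E$. For the round $L(p,1)$ with $p\geq 3$ the action is not free, while on the round $\IS^3$ it is free. The paper simply works with $M\cong\IS^3$ throughout.

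\textbf{The missing idea} is a comparison of \emph{integral} $\IS^1$-equivariant torsion with the round metric.
\begin{itemize}
\item By Theorem \ref{isoorder} the isotropy order $q$ is odd, so $q\geq 3$. Hence the two lowest critical manifolds are $K_1$ and $K_2=K_1^2$, both singular, of indices $2$ and $4$.
\item For $K_1^2$, use the Gysin sequence of $K_1\times_{\IS^1}B(\IZ/2\IZ)\to K_1/\IS^1\times B\IS^1$. Its Euler class is $\alpha x-2c$ with $\alpha=\#\pi_1(K_1)$ even, which gives $H^2_{\IS^1}(K_2;\IZ)\cong\IZ\oplus\IZ/2\IZ$.
\item After the index shift this $\IZ/2\IZ$ lies in $H^6_{\IS^1}(\Lambda^2,\Lambda^1;\IZ)$. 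Using $H^6(\Lambda^2;\IZ)=0$ (from the degree relations in Lemma \ref{lemmadegfundcorrespondence}) and the relative Gysin sequence, it survives to $H^6_{\IS^1}(\Lambda M,M;\IZ)\cong\IZ^2\oplus\IZ/2\IZ$.
\item This group depends only on $M\cong\IS^3$. The round-metric computation (Lemma \ref{lemequcohomgroups}) shows it is $\IZ^2$, with no torsion below degree $10$.
\end{itemize}
It is this metric-independence of integral equivariant torsion that rules out a single component, not a Betti-number count or a symmetry argument.
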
 

\comment{
We study the topology of the free loop space 
\begin{align*}
\Lambda M\coloneqq \mathcal{H}^1(\IS^1,M),
\end{align*}
namely the Hilbert manifold of Sobolev $H^1$-maps from $\IS^1$ to $M$. More precisely, we consider the energy functional $E:\Lambda M\rightarrow \IR$ given by the assignment
\begin{align*}
E(c)=\int_0 ^1 |\dot{c}(t)|^2dt.
\end{align*}}

In order to apply Morse theory of the energy functional we relate subset of isotropy set with the same isotropy type of the geodesic flow to critical set of the Energy functional with a fixed critical energy. A critical set of a given energy will be a connected manifold and either diffeomorphic to a component of the isotropy set of the geodesic flow or to $T^1M$. We will show that the Energy functional is perfect with respect ordinary rational cohomology and use this to deduce that the index of a closed geodesic is monotone with respect to the energy. Afterwards we will compute the $\IS^1$-equivariant  and ordinary cohomology groups of the critical sets.\\

To show that the isotropy groups are all odd, we consider the groups $H^{*} _{\IS^1}(\Lambda;\IZ/2\IZ)$. We will derive a recursion formula for the Betti numbers of $\Lambda M$ and count the number of critical manifolds that have smaller energy than the smallest energy of an critical manifold that is diffeomorphic to $T^1M$. We will show that this number is even.\\

For the second result we note that if the isotropy set of the geodesic flow is connected, the closed geodesics with the second lowest critical energy will be obtained by iterating the geodesics of the lowest energy. Moreover, the critical manifolds of the lowest and second lowest energy will be singular. We will use Morse theory and the cohomology of the critical sets to compute the groups $H^{*} _{\IS^1}(\Lambda M,M ;\IZ)$ and $H^{*}(\Lambda M,M;\IZ)$ for $*\leq 10$. It turns out that the result, we obtain under this assumption differs from the result, we would obtain, if we assumed that $\langle \cdot,\cdot\rangle $ is the round metric and the geodesic flow acts freely.\\

{\bf Acknowledgements}: This paper is part of the author's doctoral thesis at the University of Münster. The author would like to express his deep gratitude to his advisor, Burkhard Wilking, for giving him the opportunity to work on this project, for his guidance and encouragement, and especially for suggesting how to attack the Berger conjecture. He would also like to warmly thank his mentor, Johannes Ebert, as well as Peter Albers and Michael Wiemeler, for many helpful conversations and valuable advice.


\newpage


\section{Preliminaries}

\subsection{Geodesic flow}\label{sec::critmanhom}

Let $(M,\langle\cdot,\cdot\rangle)$ be a simply connected three-dimensional Riemannian manifold all of whose geodesics are closed. We always parametrize geodesics by arc length unless stated otherwise. Throughout this paper we denote the canonical bundle projection of the tangent bundle $TM$ to $M$ by $\pi_M$. When all geodesics are closed, Wadsley's theorem shows that there is a least common period $L>0$ for all unit speed geodesics. Hence the geodesic flow induces an $\IS^1\cong \IR/L\IZ$-action on the unit tangent bundle $T^1M$. The Berger conjecture is equivalent to this action being free.\\

\noindent Since $M$ is three-dimensional, $M$ must be diffeomorphic to $\IS^3$ and $T^1M\cong \IS^3\times\IS^2$. By \cite{olsen2010three}, in this case the isotropy set of the geodesic flow is a finite disjoint union \(
C_1\sqcup\cdots\sqcup C_k,\) where each $C_j$ has isotropy group $\IZ/q_j\IZ$ for some $q_j\in \IZ$ and is diffeomorphic to a lens space with $\#\pi_1(C_j)$ even.

\subsection{Free loop space}
The \emph{free loop space} of $M$ is defined to be the Sobolev space 
$$
\Lambda M \coloneqq W^{1,2}(\IS^1,M).
$$
Its elements are the continuous maps $c:S^1\rightarrow M$ whose derivative $\dot{c}(t)$ exist almost everywhere and are square integrable with respect to the Riemannian metric. It is a smooth Hilbert manifold and carries a continuous $\IS^1$-action given by reparametrization of the loops, namely
\[
([t],c)\longmapsto (s\mapsto c(s+t)).
\]
It comes with a canonical fibration $p:\Lambda M\rightarrow M, \gamma \mapsto \gamma(0)$, given by evaluating a curve at zero. 
For more details on the free loop space we refer the interested reader to the book of Klingenberg (\cite{klingenberg2012lectures}). Since $M$ is in our case diffeomorphic to $\IS^3$, the free loop space $\Lambda M$ is homeomorphic to the free loop space of $\IS^3$. This means in particular that we obtain for any coefficient ring $R$ isomorphisms
\begin{align}
H^*(\Lambda M;R)\cong H^{*} (\Lambda \IS^3;R)\cong H^*(\Omega\IS^3 ;R)\otimes H^*(\IS^3;R),
\end{align}
where $\Omega\IS^3$ is the loop space over some point in $\IS^3$. 

\subsection{Critical manifolds of the energy functional}
The energy functional is defined by
\[
E:\Lambda M\longrightarrow\IR,\qquad E(c)=\int_{\IS^1}|\dot c(t)|^2\,dt.
\]
It is smooth and $\IS^1$-invariant, and its critical points are precisely the closed geodesics (See \cite{klingenberg2012lectures} for more details). If all geodesics are closed, then $E$ is a Morse--Bott function; thus every connected component of the critical set of $E$ is a non-degenerate critical submanifold of $\Lambda M$. Having this in mind we will throughout this paper refer to these connected components as \emph{critical manifolds}. We write
\[
0=e_0<e_1<e_2<\cdots
\]
for the critical values of $E$, set
\[
K_i:=\operatorname{Crit}(E)\cap E^{-1}(e_i),
\]
and choose $\epsilon_i>0$ so that the intervals $(e_i-\epsilon_i,e_i+\epsilon_i)$ are pairwise disjoint. We denote the corresponding sublevel sets by
\[
\Lambda^i:=E^{-1}(( -\infty,e_i+\epsilon_i]).
\]
The critical set $K_0$ consists of the constant loops, and can therefore be identified with $M$.\\

\noindent If $K$ is a component of a critical set with positive energy, then evaluation at $0$ and normalization of the velocity defines an embedding
\[
\Phi:K\longrightarrow T^1M,\qquad
c\longmapsto\left(c(0),\frac{\dot c(0)}{\sqrt{E(c)}}\right).
\]
Its image is either all of $T^1M$ or one of the isotropy components $C_j$. Accordingly, we call $K$ \emph{regular} or \emph{singular}. We seem that a closed geodesic is regular if its length is a multiple of $L$, and singular otherwise. If $K$ is singular, then $K$ is diffeomorphic to a lens space and
\begin{align}\label{eqrelativehomology}
H^*(K;\IZ)\cong
\begin{cases}
\IZ,& *=0,3,\\
\IZ/(\#\pi_1(K))\IZ,& *=2,\\
0,&\text{otherwise},
\end{cases}
\end{align}
with $\#\pi_1(K)$ even. We write $c^q(t):=c(qt)$ for the $q$-fold iterate of a closed geodesic $c$ and  for each critical manifold $K\subset \Lambda M$ we denote by $K^q$ the critical manifold, consisting of the $q$-times iterations of $K$. For each isotropy component $C_j$, let $\widetilde K_j$ denote the component of the critical set of least energy with $\Phi(\widetilde K_j)=C_j$, it consists of prime geodesics, and all further connected components of critical sets mapping to $C_j$ are obtained by iteration.  \\
Finally, $L^2$ is the smallest critical value whose critical set is regular. We denote by $i_0$ the minimal index such that $K_{i_0}$ is regular. For this critical set,
\[
\Phi:K_{i_0}\longrightarrow T^1M,\qquad
c\longmapsto\left(c(0),\frac{\dot c(0)}{L}\right)
\]
is an $\IS^1$-equivariant diffeomorphism.
\subsection{Morse--Bott theory and the topology of the free loop space}\label{sec::topologyfreeloopspace}

Let $B\subset K_i$ be a connected component of the critical set. Since $E$ is Morse--Bott, the Hessian gives for every $c\in B$ an orthogonal splitting
\begin{align}\label{eq:tangentspacedecomp}
T_c\Lambda M=T_cB\oplus T_c\Lambda M^-\oplus T_c\Lambda M^+,
\end{align}
where the Hessian $\Hess E$ is zero on $T_cB$, the Hession $\Hess E$ is negative definite on $T_c\Lambda M^-$ and positive definite on $T_c\Lambda M^+$. 
The space $T_c\Lambda M^+$ has finite dimension and its dimension is called the \emph{index} of $c$. Since each critical manifold is connected, the index is constant along $B$.
It follows that the spaces $T_c\Lambda M^+$ form a finite-dimensional vector bundle $N(B)\to B$, called the negative bundle. By \cite{olsen2010three,radeschi2017berger}, these bundles $N(B)$ is orientable, if $M$ is a sphere. Moreover, the $\IS^1$-action preserves its orientation.\\

\noindent The gradient flow gives a diffeomorphism between sublevel sets corresponding to two values lying between consecutive critical values. Furthermore, after choosing the numbers $\varepsilon_i$ sufficiently small, Morse--Bott theory implies (see \cite{meyer1967kritische} for more details), that up to equivariant homotopy $\Lambda^i$ is obtained from $\Lambda^{i-1}$ by attaching for any critical manifold $B$ with energy $e_i$ handlebodies of type $DN(B)\oplus D\nu^{+}(B)$  along the boundary of $DN(B)$. In particular there are isomorphisms 
\begin{align}\label{sublevelsetsrealsublevelsetsiso}
H^*(\Lambda^i,\Lambda^{i-1};R)
&\cong \bigoplus_{B\subset K_i}H^*(DN(B),\partial DN(B);R),\\
\label{sublevelsetsrealsublevelsetsisoequiv}
H_{\IS^1}^*(\Lambda^i,\Lambda^{i-1};R)
&\cong \bigoplus_{B\subset K_i}H_{\IS^1}^*(DN(B),\partial DN(B);R).
\end{align}
Since $N(B)$ is orientable, the Thom isomorphism gives
\begin{align}\label{sublevelthomiso}
H^*(DN(B),\partial DN(B);R)&\cong H^{*-\ind(B)}(B;R),\\
\label{subsevelthomisoequivariant}
H_{\IS^1}^*(DN(B),\partial DN(B);R)&\cong H_{\IS^1}^{*-\ind(B)}(B;R).
\end{align}
Consequently,
\begin{align}\label{sublevelfinalthomiso}
H^*(\Lambda^i,\Lambda^{i-1};R)
&\cong \bigoplus_{B\subset K_i}H^{*-\ind(B)}(B;R),\\
\label{sublevelfinalthomisoequiv}
H_{\IS^1}^*(\Lambda^i,\Lambda^{i-1};R)
&\cong \bigoplus_{B\subset K_i}H_{\IS^1}^{*-\ind(B)}(B;R).
\end{align}
These two formulas are the Morse--Bott input used below to reconstruct the ordinary and equivariant cohomology of the free loop space from its critical manifolds. For a more detailed treatment we refer the interested reader to the work of Wolfgang Meyer \cite{meyer1967kritische}.


\section{Monotonicity of the index}

Using the preceding section, we now deduce two basic properties of the energy functional on three-dimensional manifolds: rational perfectness and monotonicity of the indices of its critical manifolds.

\begin{lemma}\label{lemrationalperfectness}
The Morse--Bott function $E$ is perfect with respect to the rational cohomology of $\Lambda M$.
\end{lemma}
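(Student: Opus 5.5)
The plan is a lacunarity argument: with rational coefficients every critical manifold contributes only in degrees $\ind(B)$ and $\ind(B)+3$ (or $\ind(B)+2,\ind(B)+5$), and we show that no nonzero connecting homomorphism can occur between such contributions. Perfectness means that for every $i$ the long exact sequence of the pair $(\Lambda^i,\Lambda^{i-1})$ in rational cohomology splits into short exact sequences
\[
0\to H^*(\Lambda^i,\Lambda^{i-1};\IQ)\to H^*(\Lambda^i;\IQ)\to H^*(\Lambda^{i-1};\IQ)\to 0 .
\]
Equivalently, the Morse--Bott inequalities are equalities, i.e.
\[
\sum_i \sum_{B\subset K_i}\dim H^{*-\ind(B)}(B;\IQ)=\dim H^*(\Lambda M;\IQ)
\]
in each degree.

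\textbf{Step 1: the local contributions.} By \eqref{sublevelfinalthomiso}, a critical manifold $B$ contributes $H^{*-\ind(B)}(B;\IQ)$. If $B$ is singular it is a rational homology $3$-sphere by \eqref{eqrelativehomology}, so its contribution is $\IQ$ in degrees $\ind(B)$ and $\ind(B)+3$. If $B$ is regular, $B\cong T^1M\cong \IS^3\times\IS^2$, so its contribution is $\IQ$ in degrees $\ind(B)$, $\ind(B)+2$, $\ind(B)+3$ and $\ind(B)+5$. The constant loops $K_0\cong M$ have index $0$ and contribute in degrees $0$ and $3$.

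\textbf{Step 2: a parity statement.} The key step is to show that every index is even. For this I would use the standard Bott iteration argument, as in Radeschi--Wilking and Olsen for spheres. The index of a closed geodesic in $\Lambda M$ equals its Morse index as a geodesic loop plus the concavity term. For a geodesic of length $\ell$ on a manifold all of whose geodesics are closed, it is given by counting conjugate points along the geodesic. Since all geodesics through a point $p$ return to $p$ and the return time is a multiple of $L$ on a set of full measure, each conjugate point has multiplicity $n-1=2$. Together with the orientability of $N(B)$, this yields $\ind(B)\equiv 0\pmod 2$; concretely, $\ind = 2k$ for an integer $k$ determined by the Bott index function. I expect this to be the main obstacle. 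The singular geodesics need care: conjugate points could occur with multiplicity $1$ there, so I would show that along singular geodesics in dimension $3$ the conjugate points still come with even total multiplicity. The argument is that the Jacobi fields vanishing at the conjugate point are invariant under the return map, which is a rotation of $\IR^2$ of order $q$; for $q>1$ the invariant subspace is either $0$ or all of $\IR^2$.

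\textbf{Step 3: conclusion by degree counting.} With all indices even, the rational contributions sit in degrees of the form even, even$+2$, odd$=$even$+3$, odd$=$even$+5$. This alone does not kill the connecting maps, since these maps raise degree by one and go from $H^*(\Lambda^{i-1})$ to $H^{*+1}(\Lambda^i,\Lambda^{i-1})$. I would therefore argue by a total count instead. It is well known that $H^*(\Lambda\IS^3;\IQ)$ has dimension $1$ in degree $0$ and $2$ in each degree $2k$ and $2k+1$ for $k\ge1$, up to the shift by $H^*(\IS^3)$. Moreover, by Ziller/Bott the $\IS^1$-equivariant and iterate structure forces indices to grow, roughly $\ind(K^q)\approx q\cdot\text{const}$. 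Then I would compare the number of critical manifolds below energy $e$ with the rational Betti numbers of $\Lambda M$. A nonzero differential would kill pairs of classes in adjacent degrees (odd, even), and the Euler-characteristic-type count per energy window would then fall below the known Betti numbers.

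Alternatively, and more cleanly, I would pass to the $d$-fold covering trick of Bott. The inclusion $\Lambda^{i}\to\Lambda M$ induces a surjection in rational cohomology because each critical manifold of the round-like metric carries a completing manifold (the iteration/unstable-manifold completing cycles of Hingston–Ziller type). This shows that every Thom class extends to $\Lambda M$, which is exactly perfectness. I would try the completing-manifold argument first and use the degree count only as a fallback.
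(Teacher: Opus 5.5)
\textbf{There is a genuine gap: Step~3, where perfectness actually has to be proved, is not carried out, and the counting idea cannot work as stated.}

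Evenness of the indices plus the local contributions does not force the connecting maps to vanish, as you note yourself. A count only helps if the Morse numbers are bounded \emph{above}. You also start from wrong Betti numbers: $H^*(\Lambda\IS^3;\IQ)\cong H^*(\Omega\IS^3;\IQ)\otimes H^*(\IS^3;\IQ)$ is one-dimensional in every degree except degree $1$, where it vanishes.

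The missing inputs are:
\begin{itemize}
\item[(a)] there is at most one critical manifold of each index (Corollary~3.1 in \cite{olsen2010three});
\item[(b)] the index gap formula of \cite{radeschi2017berger}, which shows that for a regular critical manifold $K$ there is no critical manifold of index $\ind(K)+2$.
\end{itemize}
With even indices, an even degree $k$ receives contributions only from $B$ with $\ind(B)=k$, or from a regular $B$ with $\ind(B)=k-2$. An odd degree $k$ receives contributions only from $\ind(B)=k-3$, or from a regular $B$ with $\ind(B)=k-5$. By (a) and (b), at most one of these occurs. Hence $m_k\le 1$ for all $k$ and $m_1=0$; this also gives the local finiteness you need. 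The weak Morse inequalities $m_k\ge b_k$ then force $m_k=b_k$ in every degree, and so all connecting homomorphisms vanish. This is the paper's proof.

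Without (a) and (b), nothing in your argument excludes, for example, the degree $\ind(B)+3$ class of a singular $B$ being killed by a nonzero connecting map against the bottom class of a later critical manifold of index $\ind(B)+4$. The completing-manifold fallback is not available either. Ziller-type completing manifolds are built from the isometry group of a symmetric metric, and nothing comparable exists for an arbitrary metric with all geodesics closed.

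\textbf{Step~2 also does not hold up.}
\begin{itemize}
\item Only conjugate points at multiples of the common period $L$ are forced to have full multiplicity $n-1=2$. Nothing forces intermediate conjugate points to have multiplicity $2$.
\item Orientability of $N(B)$ says nothing about the parity of its rank.
\item The concavity term is never controlled.
\item The return-map argument is incorrect. The linearized Poincar\'e map sends a Jacobi field vanishing at $0$ and $t_0$ to one vanishing at $-\ell$ and $t_0-\ell$. So it does not act on the space of Jacobi fields vanishing at a fixed conjugate point. Moreover, the $\IZ/q$-rotation you have in mind acts on the $2$-dimensional slice normal to the isotropy component in $T^1M$, not on Jacobi fields.
\end{itemize}
The evenness of all indices is Wilking's index parity theorem \cite{wilking2001index}, which the paper simply quotes. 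You should quote it too rather than attempt this reproof.
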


\begin{proof}
Each critical manifold is either a $\IQ$-cohomology sphere or diffeomorphic to $T^1M\cong \IS^3\times \IS^2$. By \cite{wilking2001index}, the index of every critical manifold has parity opposite to $\dim M$; since $\dim M=3$, all indices are even. Moreover, if $c^q$ is a regular geodesic and $0<l<q$, the index gap formula (Proposition 4.2 in \cite{radeschi2017berger}) gives
\begin{align}
\ind(c^{q+l})&=\ind(c^q)+\ind(c^l)+2, \label{indexformula1}\\
\ind(c^q)&\geq \ind(c^l)+2. \label{indexformula2}
\end{align}
Hence, if $c$ is regular, there is no geodesic $\tilde c$ with $\ind(\tilde c)=\ind(c)+2$; in particular, if $K$ is regular, there is no critical manifold of index $\ind(K)+2$.\\
By Corollary 3.1 in \cite{olsen2010three}, there is also at most one critical manifold of any given index. Consequently,
\begin{align*}
\bigoplus_{i\in\IN}H^*(\Lambda^i,\Lambda^{i-1};\IQ)
&\cong \bigoplus_{\substack{B\ \textup{critical manifold}\\ \textup{of energy}}}H^{*-\ind(B)}(B;\IQ)\\
&\cong
\begin{cases}
\IQ,& *=0\ \text{or}\ *\geq2,\\
0,& *=1.
\end{cases}
\end{align*}
These are the rational cohomology groups of $\Lambda\IS^3$. The Morse inequalities therefore force all boundary maps in the corresponding long exact sequences to vanish, proving rational perfectness.
\end{proof}

\begin{prop}[Index monotonicity]\label{propindexmonotonicity}
As in Proposition \ref{propmappingdegreecorespondence}, let $B_1$ and $B_2$ be critical manifolds of critical energies $e_i$ and $e_j$. If $e_i<e_j$, then $\ind(B_1)<\ind(B_2)$.
\end{prop}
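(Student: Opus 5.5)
I will prove index monotonicity by combining the rational perfectness of \cref{lemrationalperfectness} with the counting of dimensions of $H^*(\Lambda M;\IQ)$. The idea is that the critical manifolds, ordered by energy, must build up the rational cohomology of $\Lambda\IS^3$ degree by degree, and this forces their indices to increase.

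First I record, from the proof of \cref{lemrationalperfectness}, how much cohomology each critical manifold contributes. A singular critical manifold $B$ is a $\IQ$-homology $3$-sphere. By the Thom isomorphism \eqref{sublevelfinalthomiso} it contributes $\IQ$ exactly in degrees $\ind(B)$ and $\ind(B)+3$. A regular one, $B\cong \IS^3\times\IS^2$, contributes $\IQ$ in degrees $\ind(B)$, $\ind(B)+2$, $\ind(B)+3$ and $\ind(B)+5$. Since $E$ is rationally perfect, the Poincaré series of $\Lambda M$ is the sum of these contributions. That series is $1+\sum_{k\ge2}t^k$, so every degree receives at most one contribution. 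In particular all indices are even and pairwise distinct (Corollary 3.1 of \cite{olsen2010three}), and distinct critical manifolds are never ``interleaved'' in a way that would double a degree.

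Next I use perfectness on the sublevel sets to get a filtration statement. Perfectness means that each $H^*(\Lambda^i;\IQ)\to H^*(\Lambda^{i-1};\IQ)$ is surjective and the long exact sequences split. So $H^*(\Lambda^i;\IQ)$ is the direct sum of the contributions of all critical manifolds of energy at most $e_i$. Suppose, to get a contradiction, that $e_i<e_j$ but $\ind(B_1)>\ind(B_2)$. I would compare with the iteration structure. The index gap formulas \eqref{indexformula1} and \eqref{indexformula2} give $\ind(c^q)>\ind(c^l)$ for $l<q$, so monotonicity already holds along each tower of iterates, whose energies increase as $q^2$. The remaining task is to compare different towers, namely the singular towers $\widetilde K_j^q$ and the regular tower $K_{i_0}^q$.

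For the cross-tower comparison, I would argue via Bott's iteration/resonance theory: the index is governed by a common mean index. Since all geodesics lie in a single $\IS^1$-orbit structure of period $L$, the mean index per unit length, $\lim \ind(c^q)/(q\,\ell(c))$, is the same constant $\alpha$ for all geodesics; this holds because the regular ones fill $T^1M$ and the singular ones are limits of them. The index of any closed geodesic of length $\ell$ then differs from $\alpha\ell$ by a bounded amount, at most $\dim M-1=2$. The gap formula \eqref{indexformula2} shows that the regular geodesic of length $qL$ has index exactly determined by $\alpha$. Combined with distinctness and evenness of the indices and the gap of size $2$ after each regular index, I expect to squeeze $\ind(B)$ into the window between consecutive regular indices, ordered by length.

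The main obstacle is this cross-tower comparison: making precise that two singular geodesics with lengths $\ell_1<\ell_2$ cannot have reversed indices when both fall in the same window $(qL,(q+1)L)$. There I would try counting. Perfectness requires that the degrees between $\ind(K_{i_0}^q)+5$ and $\ind(K_{i_0}^{q+1})$ be filled exactly once by the singular manifolds in that window. I would then use upper semicontinuity of the index under the degeneration of regular geodesics toward singular ones, which says the index is nondecreasing in length within a window, to fix their order.
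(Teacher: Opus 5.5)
\textbf{There is a genuine gap}, at exactly the point you flag as the main obstacle: the cross-tower comparison is never carried out, and the tools you propose cannot carry it out.

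\emph{Counting cannot see the energy order.} The Poincar\'e-series identity, and the additive description of $H^*(\Lambda^i;\IQ)$ as the sum of contributions of energy at most $e_i$, are consistent with \emph{any} energy ordering of critical manifolds with the given indices. So perfectness used only through dimensions can never detect the order of attachment. Your window-filling count also presupposes that the singular indices already lie between consecutive regular indices, which is part of what has to be proved.

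\emph{The mean index does not localize enough.} Even granting $|\ind(c)-\alpha\ell(c)|\leq 2$, this only places each index in a window of width $4$. Two singular geodesics with $\alpha(\ell_2-\ell_1)<4$ are therefore not ordered by it.

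\emph{Semicontinuity does not give monotonicity in length.} Semicontinuity compares a limit geodesic with geodesics converging to it. Geodesics from different towers $\widetilde K_j^q$ are not limits of one another. Regular geodesics degenerate onto the iterates $\widetilde K_j^{q_j}$, not onto singular geodesics of intermediate length.

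\textbf{The missing ingredient is multiplicative.} Since $\Lambda\IS^3\cong\Omega\IS^3\times\IS^3$, you have $H^*(\Lambda M;\IQ)\cong H^*(\Omega\IS^3;\IQ)\otimes H^*(\IS^3;\IQ)$ as rings. Here $H^*(\Omega\IS^3;\IQ)$ is polynomial on a class $c$ of degree $2$, so $\cup c^l$ maps $H^{2m}(\Lambda M;\IQ)\cong\IQ$ isomorphically onto $H^{2m+2l}(\Lambda M;\IQ)$.

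Now suppose $e_i<e_j$ but $\ind(B_1)>\ind(B_2)$.
\begin{enumerate}[label=(\roman*)]
\item Your filtration statement shows that restriction $\iota^*\colon H^{\ind(B_1)}(\Lambda M;\IQ)\to H^{\ind(B_1)}(\Lambda^i;\IQ)$ is an isomorphism.
\item It also gives $H^{\ind(B_2)}(\Lambda^i;\IQ)=0$, because $B_2$ is the only contribution in degree $\ind(B_2)$ and $B_2\not\subset\Lambda^i$.
\item Set $l=(\ind(B_1)-\ind(B_2))/2$. A generator of $H^{\ind(B_1)}(\Lambda M;\IQ)$ can be written as $a\cup c^l$ with $a\in H^{\ind(B_2)}(\Lambda M;\IQ)$.
\item Naturality of the cup product then gives $\iota^*(a\cup c^l)=\iota^*(a)\cup\iota^*(c^l)=0$, contradicting (i).
\end{enumerate}
This is the paper's argument. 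It needs no iteration theory, index gap formula or mean index. It uses only rational perfectness and the evenness and distinctness of the indices, which you already recorded.
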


\begin{proof}
Assume, to the contrary, that $i<j$ but $\ind(B_1)>\ind(B_2)$. Rational perfectness gives
\begin{align}\label{eqrationalperfectnessusage1}
H^*(\Lambda^i;\IQ)
&\cong \bigoplus_{l\leq i}H^*(\Lambda^l,\Lambda^{l-1};\IQ)
\cong \bigoplus_{\substack{B\ \textup{critical manifold}\\ \textup{of energy}\ \leq e_i}}H^{*-\ind(B)}(B;\IQ).
\end{align}
Thus the inclusion $\iota:\Lambda^i\hookrightarrow\Lambda M$ induces isomorphisms in degrees $\ind(B_1)$ and $\ind(B_1)+3$. Since $B_1$ is the only critical manifold of index $\ind(B_1)$ and $e_i<e_j$, we also have
\[
H^{\ind(B_2)}(\Lambda^i;\IQ)=0.
\]

For $p\in\IS^3$, the ring $H^*(\Omega_pM;\IQ)$ is polynomial on one generator of degree $2$, and
\[
H^*(\Lambda M;\IQ)\cong H^*(\Omega_p\IS^3;\IQ)\otimes_{\IQ}H^*(\IS^3;\IQ).
\]
Hence there is $c\in H^2(\Lambda M;\IQ)$ such that cup product with $c^l$ induces an isomorphism
\[
\cup c^l:H^{2*}(\Lambda M;\IQ)\xrightarrow{\cong}H^{2*+2l}(\Lambda M;\IQ)
\qquad(l\in\IN).
\]
Naturality of the cup product gives the commutative diagram
\begin{center}
\begin{tikzpicture}[commutative diagrams/every diagram]
\matrix[matrix of math nodes, name=m,column sep=3cm,row sep=1.1cm,commutative diagrams/every cell] {
H^{\ind(B_2)}(\Lambda M;\IQ)&H^{\ind(B_1)}(\Lambda M;\IQ)\\
H^{\ind(B_2)}(\Lambda^i;\IQ)\cong0&H^{\ind(B_1)}(\Lambda^i;\IQ)\\};
\path[commutative diagrams/.cd,every arrow,every label]
(m-1-1) edge node {$\cup c^{\frac{\ind(B_1)-\ind(B_2)}2}$}(m-1-2)
(m-2-1) edge node {$\cup\iota^*\!\left(c^{\frac{\ind(B_1)-\ind(B_2)}2}\right)$}(m-2-2)
(m-1-1) edge node {$\iota^*$}(m-2-1)
(m-1-2) edge node {$\iota^*$}(m-2-2);
\end{tikzpicture}
\end{center}
The upper horizontal map is an isomorphism, as is the right vertical map by rational perfectness, whereas the lower-left group vanishes. This contradiction proves the claim.
\end{proof}

\begin{corollary}\label{corconnectedcritsets}
For each $i\in\IN$, the critical set $K_i$ of energy $e_i$ is connected. The orders of the isotropy groups of $C_1,\ldots,C_k$ are pairwise distinct.
\end{corollary}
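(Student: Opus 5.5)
The first claim follows from \cref{propindexmonotonicity} together with the uniqueness statement from \cite{olsen2010three} (Corollary 3.1) used in the proof of \cref{lemrationalperfectness}: there is at most one critical manifold of any given index. Suppose $K_i$ had two distinct connected components $B$ and $B'$. Both would have energy $e_i$, and we compare their indices.

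\emph{Equal indices.} If $\ind(B)=\ind(B')$, this contradicts uniqueness directly.

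\emph{Different indices.} If, say, $\ind(B)<\ind(B')$, we apply \cref{propindexmonotonicity} in its contrapositive form. Its proof only uses the comparison of $H^{\ind}(\Lambda^i;\IQ)$ with $H^{\ind}(\Lambda M;\IQ)$ via cup product with powers of $c$. I would first check that the argument also covers critical manifolds at the same level: ties in energy must force equal index. To see this, take $\Lambda^{i-1}\cup$ (a handle for $B$ only) as an intermediate space and rerun the cup-product argument. Rational perfectness still identifies its cohomology as a direct summand. A class in degree $\ind(B')$ would then have to be hit from degree $\ind(B)$ by $\cup c^{(\ind B'-\ind B)/2}$, and that class vanishes on this intermediate space. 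So the indices agree after all, and we are back in the previous case. Hence $K_i$ is connected. I expect this adaptation of \cref{propindexmonotonicity} to equal energies to be the main point needing care.

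For the second claim, suppose $C_a\neq C_b$ have the same isotropy order $q$. The prime critical manifolds $\widetilde K_a$ and $\widetilde K_b$ consist of geodesics of length $L/q$, since a geodesic in $C_j$ with isotropy $\IZ/q\IZ$ has minimal period $L/q$. They therefore have the same energy $(L/q)^2$. Both are critical manifolds, and $\Phi$ maps them onto the disjoint sets $C_a$ and $C_b$, so they are distinct components of one critical set $K_i$. This contradicts the first part. Hence the orders $q_1,\dots,q_k$ are pairwise distinct.
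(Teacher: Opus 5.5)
Your outline matches the paper's: critical manifolds at the same level must have equal index, and then Olsen's uniqueness of index gives the contradiction. You are also right that \cref{propindexmonotonicity} as stated only treats $e_i<e_j$. The paper simply cites it here, so an intermediate space really is needed.

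\textbf{The gap.} The intermediate space you chose does not produce a contradiction. With $\ind(B)<\ind(B')$ and $X=\Lambda^{i-1}\cup h_B$, you get $H^{\ind(B')}(X;\IQ)=0$. So every class of degree $\ind(B')$ restricts to zero on $X$ for trivial reasons. Writing such a class as $c^l\cup y$ with $\deg y=\ind(B)$ is then perfectly consistent. It only says that $\cup\,\iota^*c^l$ kills $H^{\ind(B)}(X;\IQ)$, which is what happens at the top of any truncated sublevel set. The cup-product argument needs the space to contain the manifold of \emph{larger} index and to omit the one of smaller index.

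\textbf{The fix.} Take $X=\Lambda^{i-1}\cup h_{B'}$ instead.
\begin{enumerate}[label=(\roman*)]
\item By excision, $H^*(\Lambda^i,\Lambda^{i-1};\IQ)\cong H^*(X,\Lambda^{i-1};\IQ)\oplus H^*(\Lambda^i,X;\IQ)$. The total dimension count is unchanged, so the long exact sequences of the refined filtration $\Lambda^{i-1}\subset X\subset\Lambda^i$ still split.
\item Hence $H^*(\Lambda M;\IQ)\to H^*(X;\IQ)$ is onto, and $H^{\ind(B')}(X;\IQ)\neq0$ because it contains $H^0(B';\IQ)$.
\item Also $H^{\ind(B)}(X;\IQ)=0$, exactly as in the proof of \cref{propindexmonotonicity}. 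Indeed, $B$ is the only manifold of index $\ind(B)$, and a regular one of index $\ind(B)-2$ is excluded by the index gap.
\item Every class in $H^{\ind(B')}(\Lambda M;\IQ)$ is $c^l\cup y$ with $\deg y=\ind(B)$. It therefore restricts to $\iota^*c^l\cup\iota^*y=0$, contradicting surjectivity.
\end{enumerate}

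With this swap the first part goes through. Your second part is correct and more explicit than the paper's: equal isotropy order $q$ gives prime critical manifolds $\widetilde K_a,\widetilde K_b$ of the same energy $(L/q)^2$ with disjoint images $C_a,C_b$.
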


\begin{proof}
Suppose that $B$ and $B'$ are distinct critical manifolds of the same energy. By Proposition \ref{propindexmonotonicity}, they must have the same index, whereas Corollary 3.1 in \cite{olsen2010three} allows at most one critical manifold of a fixed index. Hence the result follows.
\end{proof}

\begin{corollary}\label{totalcohomiso}
Let $i\in\IN$ and let $R$ be a coefficient ring. For $*\leq\ind(K_{i+1})-2$, the inclusion $\Lambda^i\hookrightarrow\Lambda M$ induces isomorphisms
\begin{align*}
H^*_{\IS^1}(\Lambda M,M;R)&\xrightarrow{\cong}H^*_{\IS^1}(\Lambda^i,M;R),\\
H^*_{\IS^1}(\Lambda M;R)&\xrightarrow{\cong}H^*_{\IS^1}(\Lambda^i;R).
\end{align*}
\end{corollary}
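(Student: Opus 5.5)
The plan is to filter $\Lambda M$ by the sublevel sets $\Lambda^j$, $j\geq i$, and show that every relative group $H^*_{\IS^1}(\Lambda^{j},\Lambda^{j-1};R)$ with $j\geq i+1$ vanishes in degrees $*\leq \ind(K_{i+1})-1$. The long exact sequences of the pairs will then give the isomorphisms in the stated range. The connectivity input is index monotonicity. By Proposition~\ref{propindexmonotonicity} and Corollary~\ref{corconnectedcritsets}, for $j\geq i+1$ the critical set $K_j$ is a single critical manifold with $\ind(K_j)\geq \ind(K_{i+1})$.

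\textbf{Step 1: vanishing of the relative groups.} By \eqref{sublevelfinalthomisoequiv},
\[
H^*_{\IS^1}(\Lambda^j,\Lambda^{j-1};R)\cong H^{*-\ind(K_j)}_{\IS^1}(K_j;R),
\]
and this is zero for $*<\ind(K_j)$. Since $\ind(K_j)\geq\ind(K_{i+1})$, the relative groups vanish for $*\leq \ind(K_{i+1})-1$. The same argument with \eqref{sublevelfinalthomiso} handles ordinary cohomology, though only the equivariant version is required. The Thom isomorphism needs $N(K_j)$ to be orientable, with the $\IS^1$-action preserving the orientation; both were recorded above. For $R=\IZ/2\IZ$ no orientability is needed at all.

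\textbf{Step 2: finite stages.} An induction on $j$, using the long exact sequence of the triple $(\Lambda^j,\Lambda^{j-1},M)$ (respectively of the pair $(\Lambda^j,\Lambda^{j-1})$), shows that
\[
H^*_{\IS^1}(\Lambda^j,M;R)\to H^*_{\IS^1}(\Lambda^{j-1},M;R)
\]
is an isomorphism for $*\leq\ind(K_{i+1})-2$ and injective in degree $\ind(K_{i+1})-1$. The isomorphism range is exactly where both flanking relative groups, in degrees $*$ and $*+1$, vanish. This explains the bound $\ind(K_{i+1})-2$ in the statement. Composing these maps gives the isomorphism $H^*_{\IS^1}(\Lambda^j,M;R)\cong H^*_{\IS^1}(\Lambda^i,M;R)$ for every $j\geq i$ in this range.

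\textbf{Step 3: passing to the limit.} This is the step I expect to be the main obstacle. First, since $E$ satisfies Palais--Smale and all critical values are isolated, $\Lambda M$ is exhausted by the $\Lambda^j$. Second, by \eqref{indexformula1} and \eqref{indexformula2} the indices grow without bound, so only finitely many $K_j$ have index below any fixed bound. Third, I would show the Milnor $\varprojlim^1$ term vanishes in the relevant degrees. The inverse system $H^{*-1}_{\IS^1}(\Lambda^j)$ is eventually constant, hence Mittag-Leffler, so $\varprojlim^1=0$ there.

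For equivariant cohomology I would use the Borel construction $E\IS^1\times_{\IS^1}\Lambda^j$. Its cohomology is again the colimit over a CW-approximation in which cells of dimension $<\ind(K_j)$ come only from finitely many stages. Alternatively, one can argue directly that $\Lambda M$ is homotopy equivalent to $\Lambda^j$ with cells of dimension $\geq\ind(K_{j+1})$ attached, equivariantly. That cellular description gives $H^*_{\IS^1}(\Lambda M,\Lambda^i;R)=0$ for $*\leq\ind(K_{i+1})-1$. The long exact sequence of the triple $(\Lambda M,\Lambda^i,M)$ then yields both claimed isomorphisms; the absolute version follows the same way from the pair $(\Lambda M,\Lambda^i)$.
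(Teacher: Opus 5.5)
Your proposal is correct and follows the same route as the paper. You show that $H^*_{\IS^1}(\Lambda^j,\Lambda^{j-1};R)$ vanishes below $\ind(K_{i+1})$ via the Thom isomorphism and index monotonicity, get isomorphisms at each finite stage from the pair and triple sequences, and then pass to the limit over $j$. Your Step 3 is more careful than the paper, which simply asserts $H^*_{\IS^1}(\Lambda M,M;R)\cong\lim_j H^*_{\IS^1}(\Lambda^j,M;R)$ without addressing the $\varprojlim^1$ term that your Mittag-Leffler argument handles.
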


\begin{proof}
By index monotonicity and equations (\ref{sublevelfinalthomisoequiv}) and (\ref{sublevelfinalthomiso}),
\[
H^*_{\IS^1}(\Lambda^j,\Lambda^{j-1};R)=0,
\qquad
H^*(\Lambda^j,\Lambda^{j-1};R)=0
\]
for $*\leq\ind(K_{i+1})-1\leq\ind(K_j)-1$. The pair sequence of $(\Lambda^j,\Lambda^{j-1})$ and the triple sequence of $(\Lambda^j,\Lambda^{j-1},M)$ therefore give, for $j\geq i$,
\[
H^*_{\IS^1}(\Lambda^j,M;R)\cong H^*_{\IS^1}(\Lambda^i,M;R),
\qquad
H^*(\Lambda^j,M;R)\cong H^*(\Lambda^i,M;R).
\]
Passing to the inverse limit yields
\begin{align*}
H^*_{\IS^1}(\Lambda M,M;R)
&\cong \lim_{j\to\infty}H^*_{\IS^1}(\Lambda^j,M;R)
\cong H^*_{\IS^1}(\Lambda^i,M;R),\\
H^*(\Lambda M;R)
&\cong \lim_{j\to\infty}H^*(\Lambda^j;R)
\cong H^*(\Lambda^i;R).
\end{align*}
\end{proof}

Recall from Section 1.1 that $i_0$ is minimal such that $K_{i_0}$ contains regular geodesics. Then $K_1,\ldots,K_{i_0-1}$ are three-dimensional and have the integral cohomology ring of a lens space with even fundamental group. Rational perfectness and index monotonicity imply $\ind(K_i)=2i$ for these critical manifolds. Together with (\ref{eqrelativehomology}), this gives, for $i\leq i_0$,
\begin{align}\label{eqrelativehomology2}
H^*(\Lambda^i,\Lambda^{i-1};\IZ)\cong
\begin{cases}
\IZ,& *=2i,2i+3,\\
\IZ/\#\pi_1(K_i)\IZ,& *=2i+2,\\
0,&\textup{elsewhere}.
\end{cases}
\end{align}
This yields the following basic consequence.

\begin{corollary}\label{propbasicisos}
Suppose $0<i<i_0$. Then the inclusion
\[
\iota_i:\Lambda^i\hookrightarrow\Lambda M
\]
induces isomorphisms
\[
\iota_i^*:H^*(\Lambda M;\IZ)\xrightarrow{\cong}H^*(\Lambda^i;\IZ)
\qquad(*\leq2i+1).
\]
Moreover,
\[
H^{2i+3}(\Lambda^i;\IZ)
\cong H^{2i+3}(\Lambda^i,\Lambda^{i-1};\IZ)
\cong\IZ,
\]
$H^*(\Lambda^i;\IZ)=0$ for $*\geq2i+4$, and $H^{2i+2}(\Lambda^i;\IZ)$ is finite cyclic.
\end{corollary}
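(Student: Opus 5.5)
We argue by induction on $i$, using the long exact sequences of the pairs $(\Lambda^l,\Lambda^{l-1})$ together with (\ref{eqrelativehomology2}) and Corollary \ref{totalcohomiso}.

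\textbf{Base case.} Here $\Lambda^0\simeq M\cong\IS^3$. For $l=1$, the relative groups $H^*(\Lambda^1,\Lambda^0;\IZ)$ are concentrated in degrees $2$, $4$ and $5$.

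\textbf{Integral cohomology of $\Lambda^i$.} We first show by induction on $l\le i$ that $H^*(\Lambda^l;\IZ)$ is
\begin{itemize}
\item $\IZ$ in degree $0$,
\item $\IZ$ in each even degree $2\le 2m\le 2l$,
\item $\IZ$ in each odd degree $3\le 2m+1\le 2l+1$,
\item finite cyclic in degree $2l+2$,
\item $\IZ$ in degree $2l+3$,
\item $0$ above degree $2l+3$.
\end{itemize}
For the inductive step, write the exact sequence
\[
\cdots\to H^{*}(\Lambda^l,\Lambda^{l-1})\to H^*(\Lambda^l)\to H^*(\Lambda^{l-1})\xrightarrow{\delta} H^{*+1}(\Lambda^l,\Lambda^{l-1})\to\cdots.
\]
The relative group is nonzero only in degrees $2l$, $2l+2$ and $2l+3$. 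The only possibly nonzero connecting maps are
\begin{itemize}
\item $H^{2l+1}(\Lambda^{l-1})\cong\IZ\to H^{2l+2}(\Lambda^l,\Lambda^{l-1})$, a finite cyclic group;
\item $H^{2l}(\Lambda^{l-1})$, finite cyclic, $\to H^{2l+1}(\Lambda^l,\Lambda^{l-1})=0$.
\end{itemize}
The second map is zero since its target vanishes. For the first, the top degree $2l+1$ class of $\Lambda^{l-1}$ must survive rationally to $\Lambda M$ by Lemma \ref{lemrationalperfectness}; this controls the map rationally, but not yet integrally.

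\textbf{Identifying the groups with $H^*(\Lambda M;\IZ)$.} Rather than tracking extensions by hand, we compare with $\Lambda M$ directly. By (\ref{eqrelativehomology2}) and index monotonicity, every $K_j$ with $j>i$ has index at least $2i+2$. Hence $H^*(\Lambda^j,\Lambda^{j-1};\IZ)=0$ for $*\le 2i+1$, and the argument of Corollary \ref{totalcohomiso}, run with ordinary cohomology, gives $H^*(\Lambda M)\cong H^*(\Lambda^i)$ for $*\le 2i$. That corollary only gives the range $*\le\ind(K_{i+1})-2=2i$, so degree $2i+1$ needs extra work.

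\textbf{Degree $2i+1$ (main obstacle).} In the sequence for $(\Lambda^{i+1},\Lambda^i)$ the relevant segment is
\[
0=H^{2i+1}(\Lambda^{i+1},\Lambda^i)\to H^{2i+1}(\Lambda^{i+1})\to H^{2i+1}(\Lambda^i)\to H^{2i+2}(\Lambda^{i+1},\Lambda^i).
\]
So the restriction is injective in degree $2i+1$, and we must show it is surjective, i.e.\ that $\delta:H^{2i+1}(\Lambda^i)\to H^{2i+2}(\Lambda^{i+1},\Lambda^i)$ vanishes.

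If $i+1<i_0$, then $H^{2i+2}(\Lambda^{i+1},\Lambda^i)\cong\IZ$, generated by the Thom class. We argue as follows.
\begin{itemize}
\item The class in $H^{2i+1}(\Lambda^i)$ comes from the relative group in degree $2i+1$ of the earlier stage, via the $\IZ$ in degree $2(i-1)+3$.
\item A nonzero $\delta$ into $\IZ$ from $\IZ$ would be injective.
\item That would kill $H^{2i+1}$ rationally, contradicting rational perfectness.
\end{itemize}
Hence $\delta=0$. If $i+1=i_0$, the critical set $K_{i_0}\cong T^1M$ has index $2i_0$ and $H^0(K_{i_0})=\IZ$, so the same rational argument applies. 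Iterating over all $j>i$ and passing to the limit as in Corollary \ref{totalcohomiso} yields the isomorphism for $*\le 2i+1$.

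\textbf{Remaining claims.} For $*\ge 2i+2$, the inductive sequence gives:
\begin{itemize}
\item $H^{2i+3}(\Lambda^i)\cong H^{2i+3}(\Lambda^i,\Lambda^{i-1})\cong\IZ$, since $H^{2i+2}(\Lambda^{i-1})$ and $H^{2i+3}(\Lambda^{i-1})$ vanish because $\Lambda^{i-1}$ has cohomology only up to degree $2i+1$;
\item $H^*(\Lambda^i)=0$ for $*\ge 2i+4$;
\item $H^{2i+2}(\Lambda^i)$ is a quotient of $\IZ/\#\pi_1(K_i)\IZ$ (the cokernel of $\delta$ from $H^{2i+1}(\Lambda^{i-1})$), hence finite cyclic.
\end{itemize}
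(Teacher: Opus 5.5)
Your proposal is correct and follows the paper's proof. Both arguments compare with $\Lambda M$ through degree $2i$ via index monotonicity, read off degrees $2i+2$, $2i+3$ and above from the pair sequence of $(\Lambda^i,\Lambda^{i-1})$, and settle degree $2i+1$ by showing that $\delta\colon H^{2i+1}(\Lambda^i;\IZ)\to H^{2i+2}(\Lambda^{i+1},\Lambda^i;\IZ)\cong\IZ$ vanishes. The only differences are two:

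\begin{itemize}
\item You kill $\delta$ by rational perfectness: a map into $\IZ$ that vanishes rationally is zero. The paper instead uses that $H^{2i+2}(\Lambda^{i+1};\IZ)\cong\IZ$ maps onto the finite group $H^{2i+2}(\Lambda^i;\IZ)$, which forces $\IZ\cong H^{2i+2}(\Lambda^{i+1},\Lambda^i;\IZ)\to H^{2i+2}(\Lambda^{i+1};\IZ)$ to be injective.
\item Your list of ``only possibly nonzero connecting maps'' omits $H^{2l-1}(\Lambda^{l-1})\to H^{2l}(\Lambda^l,\Lambda^{l-1})\cong\IZ$. This is harmless only because it is exactly the map you later show to vanish, and the degree-$2l$ group is determined by your comparison with $\Lambda M$.
\end{itemize}
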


\begin{proof}
By index monotonicity and the long exact sequence of $(\Lambda^j,\Lambda^i)$, the relative groups $H^*(\Lambda^j,\Lambda^i;\IZ)$ do not contribute in the relevant degrees. Hence
\[
H^*(\Lambda^j;\IZ)\cong H^*(\Lambda^i;\IZ)
\qquad(2j,2i>*,\ i\leq i_0).
\]
Passing to the inverse limit gives
\[
H^*(\Lambda^i;\IZ)\cong H^*(\Lambda M;\IZ)
\qquad(*\leq2i).
\]
The exact sequence of $(\Lambda^i,\Lambda^{i-1})$ further gives
\[
H^{2i+3}(\Lambda^i;\IZ)
\cong H^{2i+3}(\Lambda^i,\Lambda^{i-1};\IZ)
\cong\IZ,
\qquad
H^*(\Lambda^i;\IZ)=0\quad(*>2i+3).
\]
In degree $2i+2$, its exact fragment
\[
H^{2i+2}(\Lambda^i,\Lambda^{i-1};\IZ)
\cong \IZ/\#\pi_1(K_i)\IZ
\longrightarrow H^{2i+2}(\Lambda^i;\IZ)
\longrightarrow H^{2i+2}(\Lambda^{i-1};\IZ)=0
\]
shows that $H^{2i+2}(\Lambda^i;\IZ)$ is finite cyclic. Using this in the long exact sequence of $(\Lambda^{i+1},\Lambda^i)$, namely
\begin{center}
\resizebox{\textwidth}{!}{
\begin{tikzpicture}[commutative diagrams/every diagram]
\matrix[matrix of math nodes,name=m,commutative diagrams/every cell,ampersand replacement=\&] {
H^{2i+1}(\Lambda^{i+1},\Lambda^i;\IZ)=0
\&H^{2i+1}(\Lambda^{i+1};\IZ)\cong\IZ
\&H^{2i+1}(\Lambda^i;\IZ)
\&{}\\
H^{2i+2}(\Lambda^{i+1},\Lambda^i;\IZ)\cong\IZ
\&H^{2i+2}(\Lambda^{i+1};\IZ)\cong\IZ
\&H^{2i+2}(\Lambda^i;\IZ)
\&{}H^{2i+3}(\Lambda^{i+1},\Lambda^i;\IZ)=0\\};
\path[commutative diagrams/.cd,every arrow,every label]
(m-1-1) edge node {$\iota_1^{2i+1}$}(m-1-2)
(m-1-2) edge node {$\iota_2^{2i+1}$}(m-1-3)
(m-1-3) edge (m-1-4)
(m-2-1) edge node {$\iota_1^{2i+2}$}(m-2-2)
(m-2-2) edge node {$\iota_2^{2i+2}$}(m-2-3)
(m-2-3) edge (m-2-4);
\end{tikzpicture}}
\end{center}
we conclude that
\begin{align}\label{firstnontrivialiso}
\iota_2^{2i+1}:H^{2i+1}(\Lambda^{i+1};\IZ)
\xrightarrow{\cong}H^{2i+1}(\Lambda^i;\IZ)
\end{align}
is an isomorphism. Therefore
\[
H^{2i+1}(\Lambda^{i+1};\IZ)\cong H^{2i+1}(\Lambda M;\IZ)
\]
for $i<i_0$.
\end{proof}

\begin{prop}\label{propmappingdegreecorespondence}
For $i\in \IN$ let $B$ be a critical manifold of energy $e_i$. Suppose that $R=\IZ$ or $\IQ$. Let $[M]\in H^3(M,\IZ)$ denote a fundamental class of $M$. Moreover let $p:\Lambda M\rightarrow M$ denote the canonical projection map given by evlautaion at $0$ and $\pi_M:T^1M\rightarrow M$ the canonical projection of the unit tangent bundle. \\

\noindent Then we can choose natural isomorphisms\\ 
$k_1:H^{\ind(B)}(\Lambda^{i},\Lambda^{i-1};R)\rightarrow H^{0}(\Phi(B);R)$ and $k_2:H^{\ind(B)+3}(\Lambda^{i},\Lambda^{i-1};R)\rightarrow H^{3}(\Phi(K);R)$, such that the diagram 
\begin{center}
\begin{tikzpicture}[commutative diagrams/every diagram]
\matrix[matrix of math nodes, name=m, commutative diagrams/every cell] {
H^{\ind(B)}(\Lambda^{i},\Lambda^{i-1};\IZ)& & H^{\ind(B)+3}(\Lambda^{i},\Lambda^{i-1};\IZ)  \\
 H^{0}(\Phi(B);\IZ) & & H^{3}(\Phi(B);\IZ) \\};
\path[commutative diagrams/.cd, every arrow, every label]
(m-1-1) edge node {$\cup p^{*}([M])$}(m-1-3)
edge node {$k_1$}(m-2-1)
(m-1-3) edge node {$k_2$} (m-2-3)
(m-2-1) edge node {$\cdot\cup \pi_M ^{*}([M]) $ } (m-2-3);
\end{tikzpicture}
\end{center}
commutes up to sign.
\end{prop}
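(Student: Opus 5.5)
The plan is to make the Morse--Bott isomorphisms from Section~\ref{sec::topologyfreeloopspace} explicit and to trace the cup product with $p^*([M])$ through them. By excision and equation (\ref{sublevelsetsrealsublevelsetsiso}), restriction gives an isomorphism $H^*(\Lambda^i,\Lambda^{i-1};R)\cong H^*(DN(B),\partial DN(B);R)$. This isomorphism is induced by inclusion of pairs, so it respects cup products with classes pulled back from $\Lambda^i$. In particular, cup product with $p^*([M])$ on the left corresponds to cup product with $(p|_{DN(B)})^*([M])$ on the Thom space. Next I use that $DN(B)$ deformation retracts onto its zero section $B$. On $B$, the map $p$ is evaluation at $0$, so $p|_B=\pi_M\circ\Phi$. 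Hence $p|_{DN(B)}$ is homotopic to $\pi_M\circ\Phi\circ r$, where $r:DN(B)\to B$ is the bundle projection.

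Second, I use the module structure of the Thom isomorphism. Let $u\in H^{\ind(B)}(DN(B),\partial DN(B);R)$ be the Thom class. It exists by the orientability of $N(B)$ cited from \cite{olsen2010three,radeschi2017berger}. The Thom isomorphism is $x\mapsto r^*(x)\cup u$, and it is an isomorphism of $H^*(B)$-modules. Therefore, for $x\in H^0(B;R)$,
\[
(r^*x\cup u)\cup r^*\Phi^*\pi_M^*([M]) = \pm\, r^*\bigl(x\cup \Phi^*\pi_M^*[M]\bigr)\cup u .
\]
The sign comes from graded commutativity, since the degrees $\ind(B)$ and $3$ are involved. I then define $k_1$ and $k_2$ as the composites of the excision isomorphism, the inverse Thom isomorphism, and $(\Phi^{-1})^*:H^*(B)\to H^*(\Phi(B))$. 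With these definitions the diagram commutes up to sign. Finally I check that these maps land in the correct degrees: $H^{\ind(B)}$ corresponds to $H^0$ and $H^{\ind(B)+3}$ corresponds to $H^3$. They are isomorphisms, because $\Phi$ is an embedding, hence a diffeomorphism onto its image.

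The main obstacle is the first step: justifying that the cup product with $p^*[M]$, a class on all of $\Lambda^i$, is compatible with the excision and homotopy equivalence used in Meyer's handle attachment. The handle description in \cite{meyer1967kritische} holds only up to (equivariant) homotopy equivalence of pairs. I would therefore work with an explicit map of pairs $(DN(B),\partial DN(B))\to(\Lambda^i,\Lambda^{i-1})$, obtained by exponentiating small negative directions at each $c\in B$. Along this map, $p$ is homotopic to $\pi_M\circ\Phi\circ r$, because the loops stay close to $c$ and so $\exp$ of a small normal vector has base point near $c(0)$. With that explicit map, naturality of the relative cup product $H^*(X,A)\otimes H^*(X)\to H^*(X,A)$ gives the compatibility directly. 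The remaining point is only the sign convention, which the statement already allows to be ambiguous.
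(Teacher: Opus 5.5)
Your proposal is correct and follows the paper's proof essentially step for step. Both arguments pass from $(\Lambda^i,\Lambda^{i-1})$ to $(DN(B),\partial DN(B))$ via the handle attachment and excision, and use naturality of the relative cup product. Both then use the $H^*(B)$-module property of the Thom isomorphism together with $p|_B=\pi_M\circ\Phi$ (the paper's $f|_B=\id_B$), and define $k_1,k_2$ as the resulting composites. The one point you leave implicit is that replacing the direct sum in (\ref{sublevelsetsrealsublevelsetsiso}) by the single summand for $B$ requires $K_i=B$, which the paper takes from Corollary \ref{corconnectedcritsets}.
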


\begin{proof}
By Corollary \ref{corconnectedcritsets}, there is at most one critical manifold $B$ of energy $e_i$. The handle-attachment description in Section \ref{sec::topologyfreeloopspace} (see \cite{meyer1967kritische} for more details) provides disc bundles $DN(B)\subset N(B)$ \\
and $D\nu^+(B)\subset \nu^+(B)$ such that $E^{-1}((-\infty,e_i+\varepsilon_i))$ is diffeomorphic to $E^{-1}((-\infty,e_i-\varepsilon_i))$ with the\\ handle $DN(B)\oplus D\nu^{+}(B)$ attached along the boundary of $DN(B)$.\\

\noindent More precisely, there is a homeomorphism $f$ from $DN(B)\oplus D\nu^{+}(B)$ onto a closed subset $h_B\subset E^{-1}(-\infty,e_i+\varepsilon_i)$ satisfying
\begin{enumerate}[label=(\roman*)]
\item $E^{-1}(-\infty,e_i+\varepsilon_i)=E^{-1}(-\infty,e_i-\varepsilon_i)\bigcup h_B$;
\item $f|_{\partial DN(B)\oplus D\nu^{+}(B)}$ is a diffeomorphism onto $h_B\cap\partial E^{-1}(-\infty,e_i-\varepsilon_i)$;
\item $f|_{DN(B)\oplus D\nu^{+}(B)}$ is a diffeomorphism onto
\[
E^{-1}(\infty,e_i+\varepsilon_i)\setminus E^{-1}(-\infty,e_i-\varepsilon_i).
\]
\end{enumerate}
Since $DN(B)\oplus D\nu^{+}(B)$ and $\partial DN(B)\times D\nu^{+}(B)$ are deformation retracts of $DN(B)$ and $\partial DN(B)$, respectively, $E^{-1}(-\infty,e_i-\varepsilon)\bigcup f(DN(B))$ is a deformation retract of
\[
E^{-1}(-\infty,e_i+\varepsilon)=E^{-1}(-\infty,e_i-\varepsilon)\bigcup f(DN(B)\times D\nu^{+}(B)).
\]
Thus the inclusion induces an isomorphism
\begin{align}\label{relativesetisovectorbundle}
H^{*}(E^{-1}(-\infty,e_i+\varepsilon_i),E^{-1}(-\infty,e_i-\varepsilon_i);R)
&\cong H^{*}(DN(B),\partial DN(B);R).
\end{align}
The inclusion
\[
E^{-1}(-\infty,e_i-\varepsilon_i)\cup f(DN(B))\rightarrow E^{-1}(-\infty,e_i+\varepsilon_i)
\]
therefore induces isomorphisms
\begin{align*}
H^{*}(E^{-1}(-\infty,e_i+\varepsilon_i),E^{-1}(-\infty,e_i+\varepsilon_i);\IZ)\rightarrow H^{*}(f(DN(B)),f(\partial DN(B));R)
\end{align*}
for $*=\ind(B),\ind(B)+3$. Let $[M]\in H^{3}(M;\IZ)$ be the fundamental cohomology class of an orientation on $M$. The natural inclusions $\iota_i$ commute with evaluation:
\begin{center}
\begin{tikzpicture}[commutative diagrams/every diagram]
\matrix[matrix of math nodes, name=m, commutative diagrams/every cell] {
\Lambda^{i-1}& E^{-1}([0,e_i-\varepsilon_i))&\Lambda^{i} \\
& M &\\};
\path[commutative diagrams/.cd, every arrow, every label]
(m-1-2) edge node {$\iota_2$} (m-1-3)
edge node {$p$} (m-2-2)
(m-1-1) edge node {$\iota_1$}  (m-1-2)
 edge node[below] {$p$} (m-2-2)
(m-1-3) edge node {$p$}(m-2-2) 
;
\end{tikzpicture}
\end{center}
Naturality of the cup product gives the commutative diagram
\begin{equation}\label{proofdiagrammappingdegreecorres1}
\resizebox{0.92\textwidth}{!}{%
\begin{tikzpicture}[commutative diagrams/every diagram]
\matrix[matrix of math nodes, name=m,column sep=1cm, commutative diagrams/every cell,ampersand replacement =\&] {
H^{*}(\Lambda^{i},\Lambda^{i-1};R)\& H^{*+3}(\Lambda^{i},\Lambda^{i-1};R)\\
H^{*}(E^{-1}(-\infty,e_i+\varepsilon_i),E^{-1}([0,e_i-\varepsilon_i);R) \& H^{*+3}(E^{-1}(-\infty,e_i+\varepsilon_i),E^{-1}([0,e_i-\varepsilon_i);R)\\
H^{*}(f(DN(B)),f(SN(B));R) \& H^{*+3}(f(DN(B)),f(SN(B));R)\\
H^{*}(DN(B),SN(B);R) \& H^{*+3}(DN(B),SN(B),R)\\};
\path[commutative diagrams/.cd, every arrow, every label]
(m-1-1) edge node {$ \cup p^{*}([M])$} (m-1-2)
(m-2-1) edge node {$ \cup p^{*}([M])$}(m-2-2)
(m-3-1) edge node {$ \cup p^{*}([M])$}(m-3-2)
(m-4-1) edge node {$ \cup f^{*}(p^{*}([M]))$}(m-4-2)

(m-2-1) edge[right] node {$\iota^{*} _3$} (m-1-1)
(m-2-2) edge[right] node {$\iota^{*} _3$} (m-1-2)
(m-2-1) edge node {$\iota^{*} _4$} (m-3-1)
(m-2-2) edge node {$\iota^{*} _4$} (m-3-2)
(m-3-1) edge node {$f^{*}$}(m-4-1)
(m-3-2) edge node {$f^{*}$}(m-4-2);
\end{tikzpicture}%
}
\end{equation}
in which all vertical maps are isomorphisms. Let
\[
th(N(B))\in H^{\ind(B)}(DN(B),SN(B);R)
\]
be the Thom class of the negative bundle. The cup-product identities show that the Thom isomorphism commutes, up to sign, with cup product by $f^{*}(p^{*}([M]))$. Since $f|_{B}\equiv\id_{B}$, this compatibility extends along the bundle projection $N(B)\rightarrow B$, with cup product by $p^{*}([M])$ on $B$. Together with the commutative triangle
\begin{center}
\begin{tikzpicture}[commutative diagrams/every diagram]
\matrix[matrix of math nodes, name=m, commutative diagrams/every cell] {
 B & & C \\
& M & \\};
\path[commutative diagrams/.cd, every arrow, every label]
(m-1-1) edge node {$\Phi $} (m-1-3)
(m-1-1) edge[below] node {$p$}  (m-2-2) 
(m-1-3) edge node {$\pi_M$}(m-2-2);
\end{tikzpicture}
\end{center}
this gives the diagram
\begin{equation}\label{proofdiagrammappingdegreecorres2}
\begin{tikzpicture}[commutative diagrams/every diagram]
\matrix[matrix of math nodes, name=m,column sep= 2cm, commutative diagrams/every cell] {
H^{\ind(B)}(DN(B),SN(B);R) & H^{\ind(B)+3}(DN(B),SN(B);R)\\
H^{0}(DN(B);R) & H^{3}(DN(B);R)\\
H^{0}(B;R)& H^{3} (B;R)\\
H^{0}(C;R)& H^{3} (C;R)\\};

\path[commutative diagrams/.cd, every arrow, every label]
(m-1-1) edge node {$ \cup f^{*}(p^{*}([M]))$}(m-1-2)
(m-2-1) edge node {$ \cup f^{*}(p^{*}([M]))$}(m-2-2)
(m-3-1) edge node {$ \cup p^{*}([M]) $} (m-3-2)
(m-4-1) edge node {$ \cup \pi_M ^{*}([M])$}(m-4-2)

(m-2-1) edge[right] node {$\cup th(N(B))$}(m-1-1)
(m-2-2) edge[right] node {$\cup th(N(B))$}(m-1-2)
(m-3-1) edge[right] node {$(N(B)\rightarrow B)^{*}$}(m-2-1)
(m-3-2) edge[right] node {$(N(B)\rightarrow B)^{*}$}(m-2-2)
(m-4-1) edge[right] node {$\Phi^{*} $} (m-3-1)
(m-4-2) edge[right] node {$\Phi^{*} $} (m-3-2);
\end{tikzpicture}
\end{equation}
which commutes up to sign and has isomorphisms in both columns. Define $k_1$ and $k_2$ by composing the vertical maps in the first and second columns of (\ref{proofdiagrammappingdegreecorres1}) and (\ref{proofdiagrammappingdegreecorres2}), using inverses where necessary. Commutativity up to sign proves the claim.
\end{proof}

\begin{corollary}\label{nonzerodegree}
Let $ K_i\subset \Lambda M $ be a critical manifold of the Energy functional. Then the induced map
\begin{align*}
\pi_M |_{\Phi(K_i)} ^{*}:H^{3}(M;\IZ)\rightarrow H^{3}(\Phi(K_i);\IZ)
\end{align*}
is non-zero.
\end{corollary}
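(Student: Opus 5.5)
By Proposition \ref{propmappingdegreecorespondence}, it suffices to show that cup product with $p^{*}([M])$ is non-zero as a map
\[
H^{\ind(K_i)}(\Lambda^i,\Lambda^{i-1};\IZ)\longrightarrow H^{\ind(K_i)+3}(\Lambda^i,\Lambda^{i-1};\IZ).
\]
The vertical maps $k_1,k_2$ are isomorphisms and the diagram commutes up to sign. Take $1\in H^0(\Phi(K_i);\IZ)$; its image under $\cdot\cup\pi_M^*([M])$ is exactly $\pi_M|_{\Phi(K_i)}^*([M])$. So the corollary holds once the upper horizontal map is non-zero. I will prove that it is non-zero already rationally, which is enough because $H^{\ind(K_i)}(\Lambda^i,\Lambda^{i-1};\IZ)\cong H^0(\Phi(K_i);\IZ)\cong\IZ$ is torsion free.

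\textbf{Passing to $\Lambda M$.} By Lemma \ref{lemrationalperfectness}, $E$ is rationally perfect. So all connecting maps vanish, and the restriction $H^*(\Lambda^i,\Lambda^{i-1};\IQ)\to H^*(\Lambda^i;\IQ)$ is injective. Since $K_i$ is the unique critical manifold of its index (Corollary \ref{corconnectedcritsets} and \cite{olsen2010three}) and indices are monotone in the energy (Proposition \ref{propindexmonotonicity}), the argument of Corollary \ref{totalcohomiso} applies rationally. It shows that in degrees $n=\ind(K_i)$ and $n=\ind(K_i)+3$ the maps
\[
H^n(\Lambda^i,\Lambda^{i-1};\IQ)\to H^n(\Lambda^i;\IQ)\leftarrow H^n(\Lambda M;\IQ)
\]
are isomorphisms between one-dimensional spaces. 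The only possible exception is degree $\ind(K_i)+3$, where another critical manifold of index $\ind(K_i)+2$ or $+4$ might contribute to $H^{\ind(K_i)+3}(\Lambda M;\IQ)$; I discuss this below. These maps are compatible with cup product by $p^*([M])$, because $p$ commutes with all inclusions. The problem therefore reduces to showing that
\[
\cup p^*([M]):H^{2k}(\Lambda M;\IQ)\to H^{2k+3}(\Lambda M;\IQ)
\]
is an isomorphism for every $k$.

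\textbf{The loop-space computation.} Here I use the splitting $H^*(\Lambda \IS^3;\IQ)\cong H^*(\Omega\IS^3;\IQ)\otimes H^*(\IS^3;\IQ)$ coming from the evaluation fibration $\Lambda\IS^3\to\IS^3$, which has a section (the constant loops) and on which $\pi_1(\IS^3)$ acts trivially. In this splitting $p^*([M])=1\otimes[\IS^3]$, and $H^{2k}$ is spanned by $c^k\otimes 1$ with $c\in H^2$ as in the proof of Proposition \ref{propindexmonotonicity}. Hence cup product with $p^*([M])$ sends $c^k\otimes1$ to $c^k\otimes[\IS^3]$, which spans $H^{2k+3}(\Lambda M;\IQ)$. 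So the map is an isomorphism rationally.

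\textbf{Main obstacle.} The hard step is justifying that the Künneth-type identification is multiplicative, with the class pulled back from the base acting as $1\otimes[\IS^3]$. I would argue with the Serre spectral sequence of $\Omega\IS^3\to\Lambda\IS^3\to\IS^3$. It degenerates because of the section, and multiplication by the pulled-back base class is computed on $E_\infty=E_2$, where it is injective from the column $p=0$ to the column $p=3$. A second, bookkeeping, obstacle is matching degrees $\ind(K_i)$ and $\ind(K_i)+3$ with the relative groups. The proof of Lemma \ref{lemrationalperfectness} shows that a regular $K$ has no critical manifold of index $\ind(K)+2$, and for singular $K$ the rational cohomology of $K$ is concentrated in degrees $0$ and $3$. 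Together these should ensure that the generator coming from $K_i$ in degree $\ind(K_i)+3$ is detected in $H^*(\Lambda M;\IQ)$ in degree $\ind(K_i)+3$. Rational perfectness makes this identification via restriction injective, which is all that is needed.
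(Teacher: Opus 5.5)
Your proposal is correct and follows the paper's own proof. Proposition \ref{propmappingdegreecorespondence} reduces the claim to $\cup p^*([M])$ being nonzero on the relative groups, rational perfectness identifies these groups with $H^{\ind(K_i)}(\Lambda M;\IQ)$ and $H^{\ind(K_i)+3}(\Lambda M;\IQ)$, and the product structure $H^*(\Omega\IS^3;\IQ)\otimes H^*(\IS^3;\IQ)$ finishes the argument; the only difference is that the paper handles the regular case separately via $T^1M\cong\IS^3\times\IS^2$. Your \emph{exception} in degree $\ind(K_i)+3$ is vacuous, since $\dim H^n(\Lambda M;\IQ)=1$ for $n\geq 2$ and perfectness then forces both maps to be isomorphisms; moreover, the only critical manifold that could contribute there is a regular one of index $\ind(K_i)-2$ (through its $H^5$), which the index gap excludes, not one of index $\ind(K_i)+2$ or $\ind(K_i)+4$.
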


\begin{proof}
If $\Phi(K_i)=T^1M$, the claim follows from $T^1M\cong \IS^3\times\IS^2$. Suppose therefore that $K_i$ is singular. By rational perfectness, in degrees $m$ and $m+3$ the inclusion $\Lambda^i\hookrightarrow\Lambda M$ and the canonical maps
\begin{align*}
H^{*}(\Lambda^i,\Lambda^{i-1};\IQ)\longrightarrow H^{*}(\Lambda^i;\IQ)
\end{align*}
are isomorphisms. Since $M\cong\IS^3$,
\begin{align*}
H^{*}(\Lambda M;\IQ)\cong H^{*}(\Omega M;\IQ)\otimes_{\IQ}H^{*}(M;\IQ)
\end{align*}
as algebras, and multiplication by $p^*([M])$ is an isomorphism from even degree $*$ to degree $*+3$. In particular, since $m$ is even, naturality gives an isomorphism
\begin{align*}
\cup p^{*}([M]):
H^{\ind(K_i)}(\Lambda^i,\Lambda^{i-1};\IQ)
\xrightarrow{\cong}
H^{\ind(K_i)+3}(\Lambda^i,\Lambda^{i-1};\IQ).
\end{align*}
By Proposition \ref{propmappingdegreecorespondence}, this map corresponds, up to sign, to multiplication by $\pi_M^*([M])$ on $H^0(\Phi(K_i);\IQ)$. Hence $\pi_M^*([M])\neq0$ in $H^3(\Phi(K_i);\IQ)$, and therefore also in $H^3(\Phi(K_i);\IZ)$.
\end{proof}

\section{The \texorpdfstring{$\IS^1$}{S1}-equivariant cohomology of iterates of critical manifolds}

We compute the $\IS^1$-equivariant cohomology of iterated critical manifolds. We first record a general consequence of the Gysin sequence and then specialize to critical manifolds consisting of prime geodesics.
\begin{prop}\label{propprilimcritcohomcomp}
If $K$ is a critical manifold, then $H^{2*+1} _{\IS^1}(K;\IZ)=0$. If $K$ is regular $H^{2} _{\IS^1} (K;\IZ)\cong \IZ^2$ and $$\cup e:H^{2*} _{\IS^1}(K;\IZ)\rightarrow H^{2*+2} _{\IS^1}(K;\IZ)$$ is an isomorphism for $*\geq 5$. Moreover if $K$ is singular, then $$\cup e:H^{2*} _{\IS^1}(K;\IZ)\rightarrow H^{2*+2} _{\IS^1}(K;\IZ)$$ is an isomorphism for $*\geq 3$. 
\end{prop}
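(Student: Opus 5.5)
The plan is to use the Gysin sequence of the circle bundle $K\times E\IS^1\to K_{\IS^1}$ throughout:
\[
\cdots\to H^{m-1}(K;\IZ)\to H^{m-2}_{\IS^1}(K;\IZ)\xrightarrow{\cup e}H^{m}_{\IS^1}(K;\IZ)\to H^{m}(K;\IZ)\to H^{m-1}_{\IS^1}(K;\IZ)\to\cdots
\]
This settles the periodicity claims and most of the low-degree computations. Two extra inputs handle the rest: a localization argument for the odd-degree vanishing in high degrees, and a rational argument in degree $3$.

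\textbf{Step 1 (periodicity).} If $K$ is singular, then $K$ is a $3$-manifold, so $H^m(K)=H^{m-1}(K)=0$ for $m\geq 5$. The Gysin sequence then shows that $\cup e:H^{m-2}_{\IS^1}\to H^m_{\IS^1}$ is an isomorphism for $m-2\geq 3$, and in particular in even degrees $2*\geq 4$. If $K$ is regular, then $K\cong T^1M\cong\IS^3\times\IS^2$ has cohomology only up to degree $5$, so $\cup e$ is an isomorphism from degree $\geq 5$ on. This is stronger than the required range $*\geq 5$.

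\textbf{Step 2 (low degrees).} Since $\pi_1(K)$ is finite (singular case) or trivial (regular case) and $\pi_1(B\IS^1)=0$, the fibration $K\to K_{\IS^1}\to B\IS^1$ gives finite $\pi_1(K_{\IS^1})$. Hence $H^1_{\IS^1}(K;\IZ)=0$ and $H^2_{\IS^1}(K;\IZ)$ is torsion free.

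In the regular case, the fragment $0\to H^0_{\IS^1}\xrightarrow{e}H^2_{\IS^1}\to H^2(K)\cong\IZ\to H^1_{\IS^1}=0$ shows that $H^2_{\IS^1}(K;\IZ)$ is an extension of $\IZ$ by $\IZ$, hence $\IZ^2$.

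For degree $3$, I pass to rational coefficients. All isotropy groups are finite, so $H^*_{\IS^1}(K;\IQ)\cong H^*(K/\IS^1;\IQ)$. In the regular case $K/\IS^1$ is a simply connected $4$-dimensional rational homology orbifold with $b_2=2$ by the computation above, so $b_3=b_1=0$ by rational Poincaré duality. In the singular case $K/\IS^1$ is a closed surface, and I expect it to be $\IS^2$ from the Seifert fibration of the lens space. In both cases $H^3_{\IS^1}(K;\IQ)=0$. On the other hand, the Gysin sequence embeds $H^3_{\IS^1}(K;\IZ)$ into $H^3(K;\IZ)\cong\IZ$, because $H^1_{\IS^1}=0$. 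A torsion subgroup of $\IZ$ is zero, so $H^3_{\IS^1}(K;\IZ)=0$.

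\textbf{Step 3 (odd degrees $\geq 5$).} By Step 1 all odd groups from degree $5$ (regular case) or $3$ (singular case) on are isomorphic to each other via $\cup e$. It therefore suffices to show that they vanish in some large degree.

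In the singular case, the action on $K\cong C_j$ has constant isotropy $\IZ/q_j\IZ$. Hence $K_{\IS^1}\to K/\IS^1$ is a fibration with fibre $B(\IZ/q_j\IZ)$ over a surface. In the Leray spectral sequence, $E_2^{p,q}=H^p(K/\IS^1;H^q(B\IZ/q_j;\IZ))$ with $p\in\{0,1,2\}$ (and $p=1$ vanishing if the base is $\IS^2$). Since $H^{\mathrm{odd}}(B\IZ/q_j;\IZ)=0$, every entry in odd total degree vanishes in high degrees.

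In the regular case, let $U$ be an invariant tubular neighbourhood of the isotropy set $C_1\sqcup\dots\sqcup C_k$. Then $H^*_{\IS^1}(K,U)\cong H^*_{\IS^1}(V,\partial V)$, where $V=K\setminus \mathrm{int}\,U$ carries a free action. This group equals $H^*(V/\IS^1,\partial;\IZ)$, which vanishes above degree $4$. Therefore, in high degrees, $H^*_{\IS^1}(K)\cong\bigoplus_j H^*_{\IS^1}(C_j)$, and the odd part vanishes by the singular case.

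The main obstacle is Step 3, specifically controlling the integral Leray spectral sequence and the topology of the orbit space $C_j/\IS^1$. If identifying the orbit space proves awkward, I would fall back on the Gysin sequence for $C_j$ combined with the known cohomology (\ref{eqrelativehomology}), arguing only with orders of finite groups.
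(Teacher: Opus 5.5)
The gap is in Step 3; Steps 1 and 2 are fine and match the paper's argument. There you use the Gysin sequence, the injection $H^3_{\IS^1}(K;\IZ)\hookrightarrow H^3(K;\IZ)\cong\IZ$ coming from $H^1_{\IS^1}=0$, and rational vanishing through $H^*_{\IS^1}(K;\IQ)\cong H^*(K/\IS^1;\IQ)$.

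\emph{Regular case.} You assume that the action on $V=K\setminus\mathrm{int}\,U$ is free, so that $H^*_{\IS^1}(V,\partial V;\IZ)\cong H^*(V/\IS^1,\partial;\IZ)$ vanishes above degree $4$. That holds for $K_{i_0}$. However, the proposition covers every regular critical manifold, including the iterates $K_{i_0}^m$ with $m\geq 2$. On these the reparametrization action has generic isotropy $\IZ/m\IZ$. Then $V_{\IS^1}$ is a $B(\IZ/m\IZ)$-bundle over $V/\IS^1$, its relative integral cohomology need not vanish in high degrees, and your reduction to $\bigoplus_j H^*_{\IS^1}(C_j)$ breaks down.

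\emph{Singular case.} The isotropy of the loop action on $\widetilde K_j^l$ is $\IZ/l\IZ$. The group $\IZ/q_j\IZ$ is the isotropy of the geodesic flow, not of the loop action. Also, your Leray argument only kills the $p=1$ column if $K/\IS^1\cong\IS^2$, which you leave as an expectation.

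\emph{The fix: Step 3 is unnecessary.}
\begin{itemize}
\item For singular $K$, your own Step 1 gives isomorphisms $\cup e:H^3_{\IS^1}\to H^5_{\IS^1}\to H^7_{\IS^1}\to\cdots$, because $H^4(K)=H^5(K)=0$. Step 2 gives $H^3_{\IS^1}(K;\IZ)=0$, so all odd degrees vanish.
\item For regular $K$, apply your degree-$3$ trick once more, exactly as the paper does. Since $H^3_{\IS^1}(K;\IZ)=0$, the Gysin sequence makes $H^5_{\IS^1}(K;\IZ)\to H^5(K;\IZ)\cong\IZ$ injective. Meanwhile $H^5_{\IS^1}(K;\IQ)\cong H^5(K/\IS^1;\IQ)=0$ because $K/\IS^1$ is $4$-dimensional. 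Hence $H^5_{\IS^1}(K;\IZ)=0$, and periodicity from degree $5$ finishes the proof.
\end{itemize}
This uses only finiteness of the isotropy groups, so it covers the iterates as well.

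\emph{A false side claim in Step 2 (unused).} Finiteness of $\pi_1(K_{\IS^1})$ does not make $H^2_{\IS^1}(K;\IZ)$ torsion free, since its torsion is $\mathrm{Ext}(H_1,\IZ)\cong H_1$. Indeed, $H^2_{\IS^1}(\widetilde K^q;\IZ)$ contains $\IZ/(q,\#\pi_1(\widetilde K))\IZ$.
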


\begin{proof}
Consider first the case, that $K$ is regular, in which $K\cong \IS^3\times \IS^2$. A first application of the Gysin sequence gives us directly that  
\begin{center}
\begin{tikzpicture}[commutative diagrams/every diagram]
\matrix[matrix of math nodes, name=m,column sep = 0.35cm, commutative diagrams/every cell] {
H^{-1} _{\IS^1}(K;\IZ)\cong 0& H^{1} _{\IS^1}(K;\IZ)& H^{1} (K;\IZ)\cong 0,\\
};
\path[commutative diagrams/.cd, every arrow, every label]
(m-1-1) edge node {$\cup e $} (m-1-2)
(m-1-2) edge (m-1-3);
\end{tikzpicture}
\end{center}
so $H^1 _{\IS^1}(K;\IZ)\cong 0$ and we obtain in this case also 
\begin{center}
\begin{tikzpicture}[commutative diagrams/every diagram]
\matrix[matrix of math nodes, name=m,column sep = 0.35cm, commutative diagrams/every cell] {
H^{1} (K;\IZ)\cong 0& H^{0} _{\IS^1}(K;\IZ)\cong \IZ& H^{2}  _{\IS^1}(K;\IZ)&H^2(K;\IZ)\cong \IZ&H^1 _{\IS^1}(K;\IZ)\cong 0,\\
};
\path[commutative diagrams/.cd, every arrow, every label]
(m-1-1) edge  (m-1-2)
(m-1-2) edge node {$\cup e $}(m-1-3)
(m-1-3) edge (m-1-4)
(m-1-4) edge (m-1-5);
\end{tikzpicture}
\end{center}
So this short exact sequence splits and we obtain an isomorphism $H^{2} _{\IS^1}(K;\IZ)\cong \IZ$. Together with 
\begin{center}
\begin{tikzpicture}[commutative diagrams/every diagram]
\matrix[matrix of math nodes, name=m,column sep = 0.35cm, commutative diagrams/every cell] {
H^{1} _{\IS^1}(K;\IZ)\cong 0& H^{3} _{\IS^1}(K;\IZ)& H^{3}  (K;\IZ)\cong \IZ&H^2 _{\IS^1}(K;\IZ)\cong \IZ^2&H^4 _{\IS^1}(K;\IZ)& H^4(K;\IZ)\cong0\\
};
\path[commutative diagrams/.cd, every arrow, every label]
(m-1-1) edge  (m-1-2)
(m-1-2) edge node {$\cup e $}(m-1-3)
(m-1-3) edge (m-1-4)
(m-1-4) edge (m-1-5)
(m-1-5) edge (m-1-6);
\end{tikzpicture}
\end{center}
we see that $H^{3} _{\IS^1}(K;\IZ)$ is free and $H^{4}(K;\IZ)$ is finitely generated. By exactness we see that $H^{5} _{\IS^1}(K;\IZ)$ is finitely generated as well. Analogously by considering the Gysin sequence for $\IQ$-coefficients and using the same arguments (replace every $\IZ$ in the proof so far by a $\IQ$) we obtain the same results for $\IQ$-coefficients. Since $\IS^1$ acts orientation preserving and effectively on $T^1M$, we see that $K/\IS^1$ becomes an orientable $4$-orbifold (In fact we will later see that it even possesses the structure of a smooth manifold) and the canonical projection $q:K\times_{\IS^1}E\IS^1\rightarrow K/\IS^1$ becomes a rational homotopy equivalence. Hence $H^{*} _{\IS^1}(K;\IQ)\cong H^{*}(K/\IS^1;\IZ).$ By the universal coefficient theorem we furthermore have 
$$H^*(K/\IS^1;\IR)\cong \Hom (H_{*}(K/\IS^1;\IQ);\IR)\cong \Hom (H^{*} (K/\IS^1;\IQ);\IR).$$ By the deRham isomorphsim for orbifolds and the Poincare duality for orbifolds, that $H^{5} (K/\IS^1;\IR)\cong 0,H^{3}(K/\IS^1;\IR)\cong H^{1}(K/\IS^1;\IR)\cong 0$ and $H^{4}(K/\IS^1;\IR)\cong H^{0}(K/\IS^1;\IR)\cong \IR$. From this we deduce with $H^{*}(K/\IS^1;\IR)\cong \Hom(H^{*}(K/\IS^1;\IQ);\IR)$ and the fact that $H^{*} _{\IS^1}(K;\IQ)$ is finitely generated, so  of finite dimension, that $H^{5}(K/\IS^1)\cong 0, H^{3}(K/\IS^1;\IQ)\cong 0$ and $H^4(K/\IS^1;\IQ)\cong \IQ$. It follows that $H^{5} _{\IS^1}(K;\IQ)\cong H^{3} _{\IS^1}(K;\IQ)\cong 0 $ and $H^{4} _{\IS^1}(K;\IQ)\cong \IQ$. By applying the universal coefficient theorem for cohomology once again we get, since $H^{*}(K;\IZ)$ is free and finitely generated for $*\leq 3$, that $H^{3}(K;\IZ)\cong 0$ and the free part of $H^{4} _{\IS^1}(K;\IZ)$ is isomorphic to $\IZ$. By the exactness of the sequence 
\begin{equation}\label{prooffirstcohomologycomputiationiterates}
\begin{tikzpicture}[commutative diagrams/every diagram]
\matrix[matrix of math nodes, name=m,column sep = 0.35cm, commutative diagrams/every cell] {
 H^{3} _{\IS^1}(K;\IZ) \cong 0& H^{5} _{\IS^1}  (K;\IZ)&H^5(K;\IZ)\cong \IZ\\
};
\path[commutative diagrams/.cd, every arrow, every label]
(m-1-1) edge node {$\cup e $} (m-1-2)
(m-1-2) edge (m-1-3);
\end{tikzpicture}
\end{equation} 
we see that $H^{5} _{\IS^1}(K;\IZ)$ is free. Since $H^{5}(K;\IQ)$ is zero, this implies with the universal coefficient theorem, that $H^{5} _{\IS^1}(K;\IZ)$ is zero. By the long exact sequence for $*\geq 3$ 
\begin{center}
\begin{tikzpicture}[commutative diagrams/every diagram]
\matrix[matrix of math nodes, name=m,column sep = 0.35cm, commutative diagrams/every cell] {
 H^{2*w} (K;\IZ) \cong 0& H^{2*-1} _{\IS^1}  (K;\IZ)&H^{2*+1} _{\IS^1}(K;\IZ) & H^{2*+1} (K;\IZ)\\
};
\path[commutative diagrams/.cd, every arrow, every label]
(m-1-1) edge  (m-1-2)
(m-1-2) edge node {$\cup e $}(m-1-3)
(m-1-3) edge (m-1-4);
\end{tikzpicture}
\end{center} 
it follows inductively that $H^{2*+1} _{\IS^1}(K;\IZ)\cong 0$ for all $*$. If $K$ is singular, then $K$ has the integral cohomology ring of a lens space. As in the first case we obtain that  
\begin{center}
\begin{tikzpicture}[commutative diagrams/every diagram]
\matrix[matrix of math nodes, name=m,column sep = 0.35cm, commutative diagrams/every cell] {
 H^{-1} _{\IS^1}(K;\IZ) \cong 0& H^{1} _{\IS^1}  (K;\IZ)&H^1(K;\IZ)\cong 0\\
};
\path[commutative diagrams/.cd, every arrow, every label]
(m-1-1) edge node {$\cup e $} (m-1-2)
(m-1-2) edge (m-1-3);
\end{tikzpicture}
\end{center}
so $H^{1} _{\IS^1}(K;\IZ)\cong 0$. Furthermore we see, from the exact sequences
\begin{center}
\begin{tikzpicture}[commutative diagrams/every diagram]
\matrix[matrix of math nodes, name=m,column sep = 0.35cm, commutative diagrams/every cell] {
H^1(K;\IZ)\cong 0  & H^{0} _{\IS^1}(K;\IZ) \cong 0& H^{2} _{\IS^1}  (K;\IZ)& H^2(K;\IZ)\cong \IZ/l\IZ & H^1(K;\IZ)\cong 0,\\
};
\path[commutative diagrams/.cd, every arrow, every label]
(m-1-1) edge  (m-1-2)
(m-1-2) edge node {$\cup e $}(m-1-3)
(m-1-3) edge (m-1-4)
(m-1-4) edge (m-1-5);
\end{tikzpicture}
\end{center}
\begin{center}
\begin{tikzpicture}[commutative diagrams/every diagram]
\matrix[matrix of math nodes, name=m,column sep = 0.35cm, commutative diagrams/every cell] {
 H^{1} _{\IS^1} (K;\IZ) \cong 0& H^{3} _{\IS^1}  (K;\IZ)& H^{3} _{\IS^1}(K;\IZ)\cong \IZ&H^{2}(K;\IZ)\cong \IZ/l\IZ\\
};
\path[commutative diagrams/.cd, every arrow, every label]
(m-1-1) edge node {$\cup e $} (m-1-2)
(m-1-2) edge (m-1-3)
(m-1-3) edge node {$0$} (m-1-4);
\end{tikzpicture}
\end{center}
and 
\begin{center}
\begin{tikzpicture}[commutative diagrams/every diagram]
\matrix[matrix of math nodes, name=m,column sep = 0.35cm, commutative diagrams/every cell] {
 H^{*+1} (K;\IZ) \cong 0& H^{*} _{\IS^1}  (K;\IZ)& H^{*+2} _{\IS^1}(K;\IZ)&H^{*+1}(K;\IZ)\cong 0\\
};
\path[commutative diagrams/.cd, every arrow, every label]
(m-1-1) edge  (m-1-2)
(m-1-2) edge node {$\cup e $}(m-1-3)
(m-1-3) edge (m-1-4);
\end{tikzpicture}
\end{center}
for $*\geq 3$. We see that, that $H^{3} _{\IS^1}(K;\IZ)$ is free and finitely generated, $H^2 _{\IS^1}(K;\IZ)$ is finitely generated and $H^{*} _{\IS^1}(K;\IZ)$ is finitely generated for $*\geq 3$. Since $K$ is a rational cohomology sphere we obtain by considering the same sequences as above in rational cohomology, that $$H^{*} _{\IS^1}(K;\IQ)=\begin{cases} \IQ,~~*=0,2\\
0,~~\textup{else}.
\end{cases}$$
Since $H^{3} _{\IS^1}(K;\IZ)$ is free and finitely generated, this implies by the universal coefficient theorem for cohomology, that $H^{3} _{\IS^1}(K;\IZ)\cong 0$. Together with the sequence (\ref{prooffirstcohomologycomputiationiterates}) we obtain as in the previous case $H^{2*+1} _{\IS^1}(K;\IZ)\cong 0$ for all $*\in \IZ$.
\end{proof}

Now let $K$ consist only of prime geodesics. If this is the case, the $\IS^1$-action on $K$ is free, so
\[
K\longrightarrow K/\IS^1
\]
is a principal $\IS^1$-bundle with Euler class $e(K)$.\\

\noindent There are now two cases we will consider, namely that $K$ is singular or that $K$ is already regular and we are given the round metric on the $3$-sphere. If $K$ is singular, then $K$ has the integral cohomology ring of a lens space and $K/\IS^1\cong \IS^2$; rational perfectness excludes positive genus. In this case
\[
\langle e(K),[K/\IS^1]\rangle=\#\pi_1(K).
\]
The other case we are considering is the case of the round metric on the three sphere. In this case we obtain diffeomorphisms $K\cong T^1\IS^3$ and $K/\IS^1\cong\IS^2\times\IS^2$ (see \cite{besse2012manifolds}). If $x_1,x_2$ denote the two canonical generators of $H^2(\IS^2\times\IS^2;\IZ)$, the Weinstein number (see \cite[Theorem~2.21]{besse2012manifolds} or \cite[Theorem~B]{weinstein1974volume})
\[
\langle e(K)^2,[K/\IS^1]\rangle=2
\]
implies, after choosing orientations of the two factors, that
\[
e(K)=x_1+x_2.
\]\\

\noindent For $q\in\IN$, the Borel construction of the iterate can be written as
\[
K^q\times_{\IS^1}E\IS^1\cong K\times_{\IS^1}B(\IZ/q\IZ).
\]
The residual free circle action gives a principal bundle
\[
\pi:K\times_{\IS^1}B(\IZ/q\IZ)\longrightarrow K/\IS^1\times B\IS^1.
\]
Thus $H^*_{\IS^1}(K^q;\IZ)$ is determined by the Gysin sequence of $\pi$ once its Euler class is known.

\begin{prop}\label{propcomputationeulerclass}
Let $e(K)\in H^2(K/\IS^1;\IZ)$ be the Euler class of the bundle $K\rightarrow K/\IS^1$ and let $c\in H^2(B\IS^1;\IZ)$ be the first chern class.
Then the Euler class $e(\pi)\in H^2(K/\IS^1\times B\IS^1;\IZ)$ is given by
$$e(K)\otimes 1 -q(1\otimes c). $$
\end{prop}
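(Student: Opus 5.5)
The plan is to realize $\pi$ as the quotient of a principal $T^2$-bundle by a circle subgroup, and then read off its Euler class from the character that identifies the quotient group $T^2/H$ with $\IS^1$.

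\textbf{Step 1: the torus bundle.} I will model the Borel construction as $K^q\times_{\IS^1}E\IS^1=(K\times E\IS^1)/\IS^1$. Here $t\in\IS^1$ acts on the pair $(k,x)$ by $(k,x)\mapsto (q t\cdot k,\ t\cdot x)$, where $\cdot$ on $K$ is the reparametrization action on prime geodesics, which is free. Let $T=\IS^1\times\IS^1$ act on $K\times E\IS^1$ by $(a,b)\cdot(k,x)=(a\cdot k,b\cdot x)$. This action is free, with quotient $K/\IS^1\times B\IS^1$. So $P:=K\times E\IS^1\to K/\IS^1\times B\IS^1$ is a principal $T$-bundle. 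It is the external product of $K\to K/\IS^1$ and $E\IS^1\to B\IS^1$. Its characteristic classes, one for each coordinate circle, are therefore $e(K)\otimes 1$ and $1\otimes c$.

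\textbf{Step 2: the residual circle.} The action used in the Borel construction is that of the subgroup $H=\{(qt,t)\}\subset T$, a circle. Hence $K^q\times_{\IS^1}E\IS^1=P/H$. The map $\pi$ is the induced principal bundle $P/H\to P/T$, with structure group $T/H$. The character
\[
\chi:T\to\IS^1,\qquad \chi(a,b)=a-qb \quad(\text{additive notation})
\]
is surjective with kernel exactly $H$, so it induces an isomorphism $T/H\cong\IS^1$. Under this isomorphism, $\pi$ becomes the circle bundle $P\times_T\IS^1$ associated to $P$ via $\chi$.

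\textbf{Step 3: naturality.} For a principal $T$-bundle with classes $(e_1,e_2)\in H^2(\,\cdot\,;\IZ)^2$, the associated circle bundle along a character $(m,n)\in\IZ^2=\Hom(T,\IS^1)$ has Euler class $m e_1+n e_2$. This holds because the first Chern class is additive under tensor products of line bundles and $c_1(L^{\otimes n})=n\,c_1(L)$. Equivalently, one can argue via the induced map $BT\to B\IS^1$ on $H^2$. Applying this with $(m,n)=(1,-q)$ gives
\[
e(\pi)=e(K)\otimes 1-q(1\otimes c).
\]

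\textbf{Main obstacle.} I expect the main difficulty to be bookkeeping rather than substance. One has to check that the residual circle is identified with $\IS^1$ in the orientation implicit in the statement, i.e. via $\chi$ and not $-\chi$. One also has to confirm that the orientation conventions for $e(K)$ and $c$ agree with those in the Gysin sequence used afterwards. I would fix orientations by requiring that $\chi$ restricted to the first factor $\IS^1\times\{0\}$ be the identity. This is the natural choice, since the fiber of $\pi$ is the orbit of the free residual action coming from $K$. With this normalization the sign of the $e(K)$ term is $+1$, and the sign of the $c$ term is then forced to be $-q$.
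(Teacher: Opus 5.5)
Your proof is correct, but it is organized differently from the paper's.

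\textbf{The paper's route.} The paper chooses a classifying map $f\colon K/\IS^1\times B\IS^1\to B\IS^1$ for $\pi$. It then uses the K\"unneth splitting $H^2(K/\IS^1\times B\IS^1;\IZ)\cong H^2(K/\IS^1;\IZ)\oplus H^0(K/\IS^1;\IZ)\otimes H^2(B\IS^1;\IZ)$, so that $e(\pi)=f^*(c)$ is determined by its restrictions to the slices $K/\IS^1\times\{\ast\}$ and $\{\ast\}\times B\IS^1$. Finally it identifies the two restricted bundles by hand. An equivariant inclusion of $K$ gives the term $e(K)$. An anti-equivariant inclusion of $B(\IZ/q\IZ)$ gives a circle bundle over $B\IS^1$, which the paper recognizes as the conjugate of one with Euler class $qc$; this gives the term $-qc$.

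\textbf{Your route.} You never restrict to slices. You write $\pi$ as $P\times_T\IS^1$, for the external product $T$-bundle $P=K\times E\IS^1$ and the character $\chi=(1,-q)$. The claim then becomes the pullback of $x_1-qx_2$ along $B\chi$, i.e.\ the identity $c_1(L_1\otimes L_2^{\otimes(-q)})=c_1(L_1)-q\,c_1(L_2)$. This holds over any base and needs no K\"unneth input.

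\textbf{What each buys.} In your version, both signs are read off from $\ker\chi=H$ once you require $\chi$ to be the identity on the first factor. This is exactly the paper's implicit normalization, since its $\iota_K$ is equivariant. The paper's $-q$, by contrast, rests on the less explicit anti-equivariance and conjugation step. The paper's argument only needs the classification of circle bundles by $H^2$ plus two concrete bundle identifications, but it depends on $H^2$ of the base splitting along the factors. Yours is a pure naturality statement and isolates the bookkeeping in a single character.

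\textbf{A small precision.} The sign of the $q$-term is fixed by your model of the Borel construction, $t\cdot(k,x)=(qt\cdot k,\,t\cdot x)$. It is not fixed by the normalization of $\chi$ alone. With the convention in which $t$ acts on $E\IS^1$ by $t^{-1}$, one gets $+q$. This is harmless downstream: the ring automorphism $c\mapsto -c$ exchanges the two classes, so the Gysin computations that follow are unaffected.
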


\begin{proof}
Up to isomorphy there is a unique map of principal $\IS^1$-bundles  
$F:K \times_{\IS^1} B(\IZ/q\IZ)\rightarrow E\IS^1$ such that the following diagram commutes
\[
\begin{tikzcd}[column sep=large, row sep=large]
K\times_{\IS^1} B(\IZ/q\IZ)
    \arrow{r}{F}
    \arrow{d}{\pi}
&
E\IS^1
    \arrow{d}{\pi_{\mathrm{univ}}}
\\
K/\IS^1 \times B\IS^1
    \arrow{r}{f}
&
B\IS^1
\end{tikzcd}
\]
The Euler class of the bundle $\pi$ is then given by $e(\pi)=f^{*}(c)$. For each $p\in K$ and each $q\in B\IZ/q\IZ$ we obtain by the assignment $x\mapsto [p,x]$ an $\IS^1$-equivariant map $\iota_{K}:K\rightarrow K\times_{\IS^1}B(\IZ/q\IZ)$ and by the assignment $x\mapsto [x,q]$ an a $\IS^1$-anti equivariant map  $\iota_{B\IZ/q\IZ}:B(\IZ/q\IZ )\rightarrow K\times_{\IS^1}B(\IZ/q\IZ)$ who induce maps $\iota_{K/\IS^1}:K/\IS^1 \rightarrow K/\IS^1\times B\IS^1$ and $\iota_{B\IS^1}:B\IS^1 \rightarrow K/\IS^1\times B\IS^1$.

 such that the following diagram commute
 \[
\begin{tikzcd}[column sep=large, row sep=large]
K
    \arrow{r}{F_1}
    \arrow{d}{\pi\circ \iota_{K}}
&
E\IS^1
    \arrow{d}{\pi_{\mathrm{univ}}}
&&
B(\IZ/q\IZ)
    \arrow{r}{F_2}
    \arrow{d}{\pi\circ \iota_{B(\IZ/q\IZ)}}
&
E\IS^1
    \arrow{d}{\pi_{\mathrm{univ}}}
\\
K/\IS^1
    \arrow{r}{f\circ\iota_{K/\IS^1}}
&
B\IS^1
&&
B\IS^1
    \arrow{r}{f\circ\iota_{B\IS^1}}
&
B\IS^1
\end{tikzcd}
\] 
This gives us a $\IS^1$-equivariant map $F_1=F\circ \iota _{K}:K\times E\IS^1\rightarrow E\IS^1$ and a $\IS^1$-anti \\equivariant map $F_2=F\circ \iota_{B\IZ/q\IZ}:B(\IZ/q\IZ)\rightarrow E\IS^1$. The map $F_1$ is the bundle map from the bundle $K\rightarrow K/\IS^1$ and $(f\circ \iota_{K/\IS^1})^{*}(c)$ is the euler class of the bundle $K\rightarrow K/\IS^1$.
The map $F_2$ is a bundle map from the sphere bundle of the line bundle $\bar{\IC} \times_{\IS^1} B(\IZ/q\IZ)$, which is the conjugate of the line bundle $\IC \times_{\IS^1} B(\IZ/q\IZ)$. Since the euler class of the bundle $\IC \times_{\IS^1}E\IS^1$ is $q$-times the first chern class $c\in H^2(B\IS^1;\IZ)$, the bundle $\bar{\IC}\times_{\IS^1}B\IZ/q\IZ$ has euler class given by $-qc$. By the Universal coefficient theorem, the second cohomology of the base is given by
\begin{align*}
H^2(K/\IS^1\times B\IS^1;\IZ)\cong H^2(K/\IS^1;\IZ)\otimes H^0(B\IS^1;\IZ)\oplus H^0(K/\IS^1;\IZ)\otimes H^2(B\IS^1;\IZ).
\end{align*}
The Euler class $e(\pi)=f^{*}(c)$ is then given by 
\begin{align*}
e(\pi)&=f^{*}(c)\\
&=\iota^{*} _{K/\IS^1}f^{*}(c)\otimes 1+1\otimes \iota^{*} _{B\IS^1} f^{*}(c)\\
&=e(\pi\circ \iota_{K/\IS^1})\otimes 1+1\otimes (e(\pi\circ \iota_{B\IS^1}))\\
&=e(K)\otimes 1 -q( 1\otimes c).
\end{align*}
\end{proof}

For the round sphere,
\[
H^*(K/\IS^1\times B\IS^1;\IZ)
\cong \IZ[x_1,x_2,c]/(x_1^2,x_2^2),
\qquad |x_1|=|x_2|=|c|=2,
\]
and Proposition \ref{propcomputationeulerclass} gives $e(\pi)=x_1+x_2-qc$. Hence multiplication by $e(\pi)$ is represented, for $k\geq3$, by
\[
\begin{pmatrix}
-q&1&1&0\\
0&-q&0&1\\
0&0&-q&1\\
0&0&0&-q
\end{pmatrix}
\]
with respect to the ordered bases
\[
\{x_1x_2c^{k-3},x_1c^{k-2},x_2c^{k-2},c^{k-1}\},\qquad
\{x_1x_2c^{k-2},x_1c^{k-1},x_2c^{k-1},c^k\}.
\]
For $k=2$, with domain basis $\{x_1,x_2,c\}$ and codomain basis $\{x_1x_2,x_1c,x_2c,c^2\}$, the matrix is
\[
\begin{pmatrix}
1&1&0\\
-q&0&1\\
0&-q&1\\
0&0&-q
\end{pmatrix}.
\]
The Gysin sequence and the cokernels of these matrices yield the following result.

\begin{theorem}\label{thmcohomologycritmanifolds}
Let $(M,g)$ is isometric to the round sphere. Let $K\cong T^1 M $ be the critical manifold of lowest positive energy. Let $q\in \IN $. Then the $\IS^1$-equivariant cohomology with $\IZ$ coefficients of the critical manifold $K^q$ is given by 
\begin{align*}
H^{*} _{\IS^1}(K^q;\IZ)\cong \begin{cases}
\IZ,~~&*=0,\\
\IZ^2,~~&*=2,\\
\IZ^2\oplus \IZ/2q\IZ,~~&*=4,\\
\IZ/2q\IZ \oplus \IZ/\frac{q}{2}q^2\IZ,~~&*>4~~~~*\equiv 0\mod 2,\\
0~~&\textup{else},
\end{cases}
\end{align*}
if $q$ is even and
\begin{align*}
H^{*} _{\IS^1}(K^q;\IZ)\cong \begin{cases}
\IZ,~~&*=0,\\
\IZ^2,~~&*=2,\\
\IZ^2\oplus \IZ/q\IZ,~~&*=4,\\
\IZ/q\IZ\oplus \IZ/q^3\IZ,~~&*>4,~~~~ *\equiv 0\mod 2,\\
0~~&\textup{else},
\end{cases}
\end{align*}  
if $q$ is odd.
\end{theorem}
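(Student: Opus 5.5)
The plan is to feed the Euler class from Proposition \ref{propcomputationeulerclass} into the Gysin sequence of the principal $\IS^1$-bundle
\[
\pi:K\times_{\IS^1}B(\IZ/q\IZ)\longrightarrow K/\IS^1\times B\IS^1,
\]
whose total space computes $H^*_{\IS^1}(K^q;\IZ)$. Write $B:=K/\IS^1\times B\IS^1$. Then
\[
H^*(B;\IZ)\cong\IZ[x_1,x_2,c]/(x_1^2,x_2^2),
\]
which is free and concentrated in even degrees. The Gysin sequence
\[
H^{k-2}(B)\xrightarrow{\cup e(\pi)}H^k(B)\xrightarrow{\pi^*}H^k_{\IS^1}(K^q)\longrightarrow H^{k-1}(B)\xrightarrow{\cup e(\pi)}H^{k+1}(B)
\]
therefore splits into two identifications:
\[
H^{2k}_{\IS^1}(K^q;\IZ)\cong\coker\bigl(\cup e(\pi):H^{2k-2}(B)\to H^{2k}(B)\bigr),
\]
\[
H^{2k+1}_{\IS^1}(K^q;\IZ)\cong\ker\bigl(\cup e(\pi):H^{2k}(B)\to H^{2k+2}(B)\bigr).
\]
Everything thus reduces to linear algebra over $\IZ$ for multiplication by $e(\pi)=x_1+x_2-qc$.

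For the odd degrees I will show that every multiplication map is injective. In degree $0\to2$ the map sends $1\mapsto x_1+x_2-qc\neq0$. For $2\to4$, the $4\times3$ matrix displayed before the theorem has rank $3$ over $\IQ$; for example, its top $3\times3$ block has determinant $q^2+2q\cdot 1\cdot\ldots\neq0$, and I only need some nonzero $3\times3$ minor. For $k\geq3$ the $4\times4$ matrix is upper triangular with determinant $q^4\neq0$. Hence all kernels vanish, which gives the ``$0$ else'' part of the statement (together with Proposition \ref{propprilimcritcohomcomp}).

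For the even degrees:
\begin{itemize}
\item Degree $0$: the cokernel of $0\to H^0(B)$ is $\IZ$.
\item Degree $2$: the cokernel of $\IZ\to\IZ^3$, $1\mapsto(1,1,-q)$, is $\IZ^2$, because this vector is primitive.
\item Degree $4$: compute the Smith normal form of the $4\times3$ matrix. The entries have gcd $1$, and so do the $2\times2$ minors (for instance, rows $1,2$ and columns $1,3$ give the minor $1$). The $3\times3$ minors are, up to sign, $2q$, $q^2$, $q^2$ and $q^3$ or similar, so their gcd is $q\gcd(2,q)$. The cokernel is therefore $\IZ\oplus\IZ/(q\gcd(2,q))$, where the free $\IZ$ comes from the rank deficit $4-3$. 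I need one more free summand to reach $\IZ^2$, so I will recheck this count carefully: the rank of $H^4(B)$ is $4$ (basis $x_1x_2,x_1c,x_2c,c^2$) and the image has rank $3$, which gives only one free summand. I therefore expect that the intended statement, or the precise Gysin indexing, needs to be reconciled here, and I will verify it against the rational computation $H^4_{\IS^1}(K;\IQ)\cong\IQ$ from Proposition \ref{propprilimcritcohomcomp}.
\item Degrees $2k\geq6$: the cokernel of the $4\times4$ upper triangular matrix is finite of order $|\det|=q^4$. Its invariant factors are $d_1=d_2=1$, since some $2\times2$ minor equals $1$; for example, rows $1,2$ and columns $2,4$ give $\det\begin{pmatrix}1&0\\-q&1\end{pmatrix}=1$. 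Next, $d_3$ equals the gcd of the $3\times3$ minors. Deleting row $4$ and column $1$ gives
\[
\det\begin{pmatrix}1&1&0\\-q&0&1\\0&-q&1\end{pmatrix}=2q,
\]
while the other minors are multiples of $q^2$ or equal $\pm q^3$. Hence $d_3=q\gcd(2,q)$ and $d_4=q^4/d_3$. This gives $\IZ/q\oplus\IZ/q^3$ for $q$ odd and $\IZ/2q\oplus\IZ/\tfrac{q^3}{2}$ for $q$ even, matching the statement.
\end{itemize}

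The main obstacle is bookkeeping rather than ideas. The hard parts are computing all $3\times3$ minors correctly so that the gcd is exactly $q\gcd(2,q)$ (in particular, ruling out that every minor is divisible by $2q$ when $q$ is odd), and getting degree $4$ right, including the free rank and the sign conventions in $e(\pi)$. For the minors I would first list all $16$ of them for the $4\times4$ matrix symbolically. For degree $4$ I would cross-check the free rank with rational coefficients via Proposition \ref{propprilimcritcohomcomp}.
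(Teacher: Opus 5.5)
Your approach is exactly the paper's. Its proof is the Gysin sequence of $\pi$ with $e(\pi)=x_1+x_2-qc$, followed by the cokernels of the two displayed matrices, and your Smith-normal-form computations are correct in every degree.

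The one place you hedge should be settled decisively. The conflict in degree $4$ is a misprint in the statement, not a problem with your Gysin indexing. Three independent checks confirm this:
\begin{enumerate}[label=(\roman*)]
\item The image of $\cup e(\pi)\colon H^2(B)\to H^4(B)$ has rank $3$ inside a free group of rank $4$.
\item Since all isotropy is finite, $H^4_{\IS^1}(K^q;\IQ)\cong H^4(K/\IS^1;\IQ)\cong H^4(\IS^2\times\IS^2;\IQ)\cong\IQ$.
\item For $q=1$ the action is free, so $H^4_{\IS^1}(K;\IZ)\cong H^4(\IS^2\times\IS^2;\IZ)\cong\IZ$, whereas the statement would give $\IZ^2$.
\end{enumerate}
The correct group is
\[
H^4_{\IS^1}(K^q;\IZ)\cong\IZ\oplus\IZ/\bigl(q\gcd(2,q)\bigr)\IZ .
\]
So only the free rank in the statement is wrong; the torsion agrees. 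This corrected value is in fact what the paper uses later: $H^4_{\IS^1}(K_1;\IZ)\cong\IZ$ and $H^4_{\IS^1}(K_2;\IZ)\cong\IZ\oplus\IZ/4\IZ$ in Lemma~\ref{lemequcohomgroups}, and free part $\IZ$ in Proposition~\ref{propprilimcritcohomcomp} and Lemma~\ref{lemregularleveltwotorsion}. As literally stated, the theorem is false in degree $4$ and so cannot be proved there. What your argument proves is the corrected version, and you should say so rather than look for a reindexing.

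Three small cleanups:
\begin{enumerate}[label=(\roman*)]
\item The top $3\times3$ minor of the $4\times3$ matrix is simply $2q$.
\item You need not list the sixteen $3\times3$ minors of the $4\times4$ matrix $A$. Write $A=-qI+N$ with $N^3=0$; then $\operatorname{adj}(A)=q^4A^{-1}=-(q^3I+q^2N+qN^2)$. Its nonzero entries are $-q^3$, $-q^2$ and a single $-2q$, so the gcd $q\gcd(2,q)$ is immediate.
\item The sign conventions in $e(\pi)$ do not matter, since replacing $x_2$ by $-x_2$ or $c$ by $-c$ is just a change of basis.
\end{enumerate}
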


If $K$ is singular, write $\alpha=\#\pi_1(K)$. Since $K/\IS^1\cong\IS^2$,
\[
H^*(K/\IS^1\times B\IS^1;\IZ)
\cong \IZ[x,c]/(x^2),\qquad |x|=|c|=2,
\]
and $e(\pi)=\alpha x-qc$. Multiplication by $e(\pi)$ is represented in degree $0\to2$ by
\[
\begin{pmatrix}\alpha\\-q\end{pmatrix},
\]
and in the subsequent even degrees, with the natural bases $\{xc^{k-2},c^{k-1}\}$ and $\{xc^{k-1},c^k\}$, by
\[
\begin{pmatrix}
-q&\alpha\\
0&-q
\end{pmatrix}.
\]
The Gysin sequence therefore gives the following theorem.

\begin{theorem}
Let $(M,g)$ be a simply connected Riemannian $3$-manifold, all of whose geodesics are closed. Suppose, that $K\subset \Lambda M$ is a singular critical manifold containing only prime geodesics. Then for any $q\in \IN$ the $\IS^1$-equivariant cohomology with $\IZ$ coefficients of the critical manifold $K^q$ is given by
\begin{align*}
H^{*} _{\IS^1}(K^q;\IZ)\cong \begin{cases}
\IZ,~~&*=0,\\
\IZ\oplus \IZ/(q,\#\pi_1(K))\IZ,~~&*=2,\\
\IZ/(q,\#\pi_1(K))\IZ \oplus \IZ/\frac{q^2}{(q,\#\pi_1(K))}\IZ,~~&*>2,~~*\equiv 0\mod 2\\
0,~~&\textup{else}.
\end{cases}
\end{align*}
\end{theorem}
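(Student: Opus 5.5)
The plan is to compute $H^*_{\IS^1}(K^q;\IZ)$ as the integral cohomology of the total space of the principal $\IS^1$-bundle
\[
\pi:K\times_{\IS^1}B(\IZ/q\IZ)\longrightarrow K/\IS^1\times B\IS^1 ,
\]
using the identification $K^q\times_{\IS^1}E\IS^1\cong K\times_{\IS^1}B(\IZ/q\IZ)$ recorded before Proposition \ref{propcomputationeulerclass}. Since $K$ is singular and consists of prime geodesics, $K/\IS^1\cong\IS^2$. Hence the base has cohomology ring $\IZ[x,c]/(x^2)$, and by Proposition \ref{propcomputationeulerclass} the Euler class is $e(\pi)=\alpha x-qc$ with $\alpha=\#\pi_1(K)$.

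I then write down the Gysin sequence of $\pi$:
\[
\cdots\to H^{k-2}(B)\xrightarrow{\cup e(\pi)}H^{k}(B)\to H^{k}_{\IS^1}(K^q)\to H^{k-1}(B)\xrightarrow{\cup e(\pi)}H^{k+1}(B)\to\cdots,
\]
where $B=K/\IS^1\times B\IS^1$. The base has cohomology only in even degrees, so the sequence breaks into short pieces:
\[
H^{2k}_{\IS^1}(K^q)\cong\coker\bigl(\cup e(\pi):H^{2k-2}(B)\to H^{2k}(B)\bigr),\qquad
H^{2k+1}_{\IS^1}(K^q)\cong\ker\bigl(\cup e(\pi):H^{2k}(B)\to H^{2k+2}(B)\bigr).
\]

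Next I check the matrices stated before the theorem directly from the ring structure. We have $xc^{k-2}\cdot(\alpha x-qc)=-q\,xc^{k-1}$, because $x^2=0$, and $c^{k-1}\cdot(\alpha x-qc)=\alpha\,xc^{k-1}-q\,c^k$. This gives the upper triangular matrix with diagonal entries $-q,-q$ and off-diagonal entry $\alpha$. In degree $0\to2$ the map is $1\mapsto \alpha x-qc$.

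\begin{itemize}
\item \emph{Odd degrees vanish.} All these maps are injective: the column $(\alpha,-q)^T$ is nonzero, and the $2\times2$ matrices have determinant $q^2\neq0$. Hence every kernel is zero.
\item \emph{Degree $0$.} Here $H^0_{\IS^1}(K^q)\cong H^0(B)\cong\IZ$.
\item \emph{Degree $2$.} The group is $\IZ^2/\IZ(\alpha,-q)$. A primitive vector divided by its content $d=\gcd(q,\alpha)$ extends to a basis of $\IZ^2$, so this quotient is $\IZ\oplus\IZ/d\IZ$.
\item \emph{Degrees $2k\geq4$.} I use the Smith normal form of $\begin{pmatrix}-q&\alpha\\0&-q\end{pmatrix}$. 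The first determinantal divisor is the gcd of the entries, $\gcd(q,\alpha)=d$, and the second is $|\det|=q^2$. Hence the invariant factors are $d$ and $q^2/d$, and the cokernel is $\IZ/d\IZ\oplus\IZ/\tfrac{q^2}{d}\IZ$.
\end{itemize}
This gives exactly the table in the statement.

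The computation itself is routine once the Euler class is known. The only genuinely delicate inputs are (i) the identification of the Borel construction of $K^q$ with the bundle $\pi$, and (ii) the sign and normalisation of $e(\pi)$, in particular $\langle e(K),[K/\IS^1]\rangle=\alpha$; both are taken from the preceding discussion and Proposition \ref{propcomputationeulerclass}. The result is insensitive to signs anyway, since only $\gcd(q,\alpha)$ and $q^2$ enter.
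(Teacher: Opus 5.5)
Your proposal is correct and follows the paper's own argument: the paper likewise reads off $H^*_{\IS^1}(K^q;\IZ)$ from the Gysin sequence of $\pi$, using $e(\pi)=\alpha x-qc$ and the same multiplication matrices. The paper leaves the kernel and cokernel computation implicit, and you supply it correctly: injectivity kills the odd degrees, and the Smith normal form gives invariant factors $\gcd(q,\alpha)$ and $q^2/\gcd(q,\alpha)$ in degrees $\geq 4$.
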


\section{Order of the isotropy groups of the geodesic flow}

We show that all isotropy groups of the geodesic flow have odd order. Let $i_0$ be minimal such that $K_{i_0}$ is regular. Since there is at most one critical manifold at each critical energy, the number of positive-energy critical manifolds below $K_{i_0}$ is
\begin{align}\label{connectionnumberisotropygroupsparity}
i_0-1=\#\{K_i^l:l\leq q_i,\ 1\leq i\leq k\}=\sum_{i=1}^k(q_i-1).
\end{align}
Thus, if $i_0-1$ is even, all isotropy groups have odd order. We first show that at most one isotropy group can have even order and that, if $i_0$ is even, then $H^3_{\IS^1}(K_{i_0};\IZ/2\IZ)\neq0$. We then use the known $\IS^1$-equivariant cohomology of $\Lambda M$ with $\IZ/2\IZ$ coefficients to derive a recursion for its Betti numbers. Together with the vanishing of $H^{2i_0}(\Lambda^{i_0-1};\IZ)$ and Morse theory, this gives the opposite conclusion and hence a contradiction.

\begin{prop}\label{propatmostoneevenisoorder}
Let $q_1,\ldots,q_k$ be the isotropy orders of $C_1,\ldots,C_k$. At most one $q_i$ is even.
\end{prop}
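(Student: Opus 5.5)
The plan is to compare energies of suitable iterates and then invoke Corollary~\ref{corconnectedcritsets}. Suppose, to the contrary, that two distinct isotropy components $C_i$ and $C_j$ ($i\neq j$) both have even isotropy order, say $q_i=2a$ and $q_j=2b$.

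First I record the lengths involved. Since the isotropy group of $C_i$ is $\IZ/q_i\IZ\subset \IS^1\cong\IR/L\IZ$, every unit speed geodesic through a point of $C_i$ has minimal period $L/q_i$. Hence the prime critical manifold $\widetilde K_i$ with $\Phi(\widetilde K_i)=C_i$ consists of closed geodesics of length $L/q_i$. Its $l$-fold iterate $\widetilde K_i^l$ consists of closed geodesics of length $lL/q_i$. Since $c$ and $c^l$ have the same initial point and the same initial direction, $\widetilde K_i^l$ is again a critical manifold with $\Phi(\widetilde K_i^l)=C_i$.

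The key step is to take $l=q_i/2=a$. Then $\widetilde K_i^{a}$ consists of geodesics of length $L/2$, so it lies in $E^{-1}(L^2/4)$ for the energy normalization used in the paper (with $L^2$ the energy of $K_{i_0}$). The same reasoning shows that $\widetilde K_j^{b}$ consists of geodesics of length $L/2$, so it lies in $E^{-1}(L^2/4)$ as well, and $\Phi(\widetilde K_j^b)=C_j$. Because $L/2$ is not a multiple of $L$, both critical manifolds are singular. Their images $C_i$ and $C_j$ are disjoint components of the isotropy set. Consequently $\widetilde K_i^{a}$ and $\widetilde K_j^{b}$ are disjoint, hence distinct, connected components of the critical set $K_m$ at the common critical value $e_m=L^2/4$.

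This contradicts Corollary~\ref{corconnectedcritsets}, which states that each $K_m$ is connected. (Equivalently, by index monotonicity the two manifolds would have equal index, which Corollary 3.1 of \cite{olsen2010three} forbids.) Hence at most one $q_i$ is even.

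The main point needing care is that the iterate of a geodesic in $C_i$ still maps to $C_i$ under $\Phi$, and that the energy is determined by the length alone. Both follow directly from the definition of $\Phi$ and of $E$ on unit speed reparametrized closed geodesics. So I expect no serious obstacle beyond keeping the normalization of $E$ consistent: the argument only uses that equal lengths give equal energies.
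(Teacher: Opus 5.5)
Your proposal is correct and takes essentially the paper's approach. The paper also observes that $\widetilde K_i^{q_i/2}$ and $\widetilde K_j^{q_j/2}$ are singular critical manifolds of the same energy, since both consist of geodesics of length $L/2$. It then derives the contradiction from index monotonicity together with Olsen's uniqueness of a critical manifold of a given index, which is exactly the content of Corollary~\ref{corconnectedcritsets} that you invoke.
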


\begin{proof}
Assume that there exist $i,j\in\{1,\ldots,k\}$ with $i\neq j_1$ and $q_j\equiv q_{j_1}\equiv0\mod2$. Then $\tilde K_i^{q_i/2}$ and $\tilde{K}_j^{q_j/2}$ are singular and have the same energy. By index monotonicity,
\[
\ind(\tilde{K}_i^{q_i})=\ind(\tilde{K}_j^{q_j})<\ind(K_{i_0}),
\]
contradicting the uniqueness of a connected singular critical manifold of a given index.
\end{proof}

\begin{lemma}\label{lemregularleveltwotorsion}
Let $i_0$ be minimal such that $K_{i_0}$ is regular. If $i_0\equiv0\mod2$, then
\[
H^3_{\IS^1}(K_{i_0};\IZ/2\IZ)\neq0.
\]
\end{lemma}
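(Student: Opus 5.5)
The plan is to reduce the claim to the existence of $2$-torsion in integral equivariant cohomology, and then to detect that torsion through the unique isotropy component of even order.

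\emph{Parity reduction.} Since $i_0$ is even, $i_0-1$ is odd. By (\ref{connectionnumberisotropygroupsparity}), $\sum_{i=1}^k(q_i-1)$ is odd, so at least one $q_j$ is even. By Proposition \ref{propatmostoneevenisoorder}, exactly one isotropy component, say $C_j$ with $q:=q_j$, has even isotropy order, and all other $C_i$ have odd isotropy.

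\emph{Reduction to integral torsion.} Under the $\IS^1$-equivariant diffeomorphism $\Phi:K_{i_0}\to T^1M$, the action on $K_{i_0}$ is the geodesic flow action on $T^1M\cong\IS^3\times\IS^2$. By Proposition \ref{propprilimcritcohomcomp}, $H^{2*+1}_{\IS^1}(K_{i_0};\IZ)=0$. The universal coefficient theorem then gives
\[
H^{2m+1}_{\IS^1}(K_{i_0};\IZ/2\IZ)\cong \operatorname{Tor}\bigl(H^{2m+2}_{\IS^1}(K_{i_0};\IZ),\IZ/2\IZ\bigr).
\]
So it suffices to show that $H^4_{\IS^1}(K_{i_0};\IZ)$ contains an element of order $2$.

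\emph{Splitting $K_{i_0}$ along $C_j$.} Choose an invariant tubular neighbourhood $U$ of $\Phi^{-1}(C_j)$ and let $V$ be the closure of its complement. On $V$ every isotropy group has odd order, so the map $V\times_{\IS^1}E\IS^1\to V/\IS^1$ has fibres $B(\IZ/q_i\IZ)$ that are $\IZ/2\IZ$-acyclic. Hence mod $2$ equivariant cohomology of $V$ and of $\partial U=U\cap V$ equals the ordinary mod $2$ cohomology of the orbit spaces, which are compact of dimension at most $4$ and $3$ respectively. For $U\simeq C_j$, the Borel construction fibres over $C_j/\IS^1\cong\IS^2$ with fibre $B(\IZ/q\IZ)$. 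Since $q$ is even, $H^*(B(\IZ/q\IZ);\IZ/2\IZ)$ is $\IZ/2\IZ$ in every degree, and the Serre spectral sequence shows $H^{\mathrm{odd}}_{\IS^1}(C_j;\IZ/2\IZ)\neq0$ in all large degrees. The Mayer--Vietoris sequence of $(U,V)$ then gives $H^{2m+1}_{\IS^1}(K_{i_0};\IZ/2\IZ)\cong H^{2m+1}_{\IS^1}(C_j;\IZ/2\IZ)\neq0$ for $2m+1>5$. Equivalently, $H^{2m+2}_{\IS^1}(K_{i_0};\IZ)$ has even order for large $m$.

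\emph{Descending to degree $4$ (main obstacle).} The integral Gysin sequence, with $H^4(K_{i_0};\IZ)=0=H^6(K_{i_0};\IZ)$ and vanishing odd equivariant groups, gives
\[
H^5(K_{i_0};\IZ)\cong\IZ\to H^4_{\IS^1}\xrightarrow{\cup e}H^6_{\IS^1}\to0,
\]
and $\cup e$ is an isomorphism $H^{2*}_{\IS^1}\to H^{2*+2}_{\IS^1}$ for $*\geq3$. So the stable group is $H^6_{\IS^1}$, a quotient of $H^4_{\IS^1}$ by a cyclic subgroup. However, the even order of that quotient does not by itself force $2$-torsion in $H^4_{\IS^1}$. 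I would therefore avoid this descent and run the Mayer--Vietoris argument directly in degree $3$. The relevant terms are $H^2(\partial U/\IS^1;\IZ/2\IZ)$, $H^3(V/\IS^1;\IZ/2\IZ)$ and $H^3_{\IS^1}(C_j;\IZ/2\IZ)$. The last of these is nonzero: $E_2^{0,3}$ and $E_2^{2,1}$ survive, because $C_j/\IS^1\cong\IS^2$ and $H^1(B(\IZ/q\IZ);\IZ/2\IZ)\neq0$ for even $q$. The key point is that the class $x\otimes u\in E_2^{2,1}$, with $u\in H^1(B(\IZ/q\IZ);\IZ/2\IZ)$, is not hit by the connecting map from $H^2(\partial U/\IS^1;\IZ/2\IZ)$. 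For that I would use that $\partial U/\IS^1$ is a $3$-manifold (orbifold) fibred over $C_j/\IS^1$ with fibre a $2$-disc quotient, and compare the fundamental classes. If this bookkeeping becomes unwieldy, the fallback is to compute $H^4_{\IS^1}(K_{i_0};\IZ)$ via Corollary \ref{nonzerodegree} and the Euler class pairing, showing that the cyclic image of $H^5(K_{i_0};\IZ)$ has odd index while $H^6_{\IS^1}$ has even order.
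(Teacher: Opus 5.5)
\textbf{There is a genuine gap: the decisive step is never carried out.} Your reduction to $2$-torsion in $H^4_{\IS^1}(K_{i_0};\IZ)$, the parity argument producing an even $q_j$, and the localization of high-degree equivariant cohomology at $C_j$ are the same first steps as in the paper. After that, you keep only the information that the stable groups are nonzero mod $2$, resp.\ of even order integrally. You rightly note that even order of $H^6_{\IS^1}$ alone does not force $2$-torsion in $H^4_{\IS^1}$, and then leave the remaining step to sketches.

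The missing idea is a rank count. With $\alpha=\#\pi_1(\tilde K_j)$ and $q_j$ both even, the computation for iterates gives, for $m\geq 1$,
\[
H^{2m+2}_{\IS^1}(\tilde K_j^{q_j};\IZ)\cong\IZ/(q_j,\alpha)\IZ\oplus\IZ/\tfrac{q_j^2}{(q_j,\alpha)}\IZ,
\]
with \emph{both} summands of even order. By the universal coefficient theorem, $\dim H^{2m+1}_{\IS^1}(\tilde K_j^{q_j};\IZ/2\IZ)=2$, not merely $\neq 0$. The Gysin sequence can only absorb a rank-one defect. In your own setup this finishes the proof at once:
\begin{itemize}
\item Your Mayer--Vietoris isomorphism already holds in degree $5$, since $\partial U/\IS^1$ is $3$-dimensional and $V/\IS^1$ is $4$-dimensional. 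So $H^5_{\IS^1}(K_{i_0};\IZ/2\IZ)\cong(\IZ/2\IZ)^2$.
\item The Gysin fragment $0=H^4(K_{i_0};\IZ/2\IZ)\to H^3_{\IS^1}(K_{i_0};\IZ/2\IZ)\to H^5_{\IS^1}(K_{i_0};\IZ/2\IZ)\to H^5(K_{i_0};\IZ/2\IZ)\cong\IZ/2\IZ$ then gives $\dim H^3_{\IS^1}(K_{i_0};\IZ/2\IZ)\geq 1$.
\end{itemize}
The paper runs the same count one degree up. Since $H^6(K_{i_0})=0$, the map $\cup e$ from $H^4_{\IS^1}(K_{i_0};\IZ/2\IZ)$ onto $H^6_{\IS^1}(K_{i_0};\IZ/2\IZ)$ is surjective. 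That target surjects onto $H^6_{\IS^1}(\tilde K_j^{q_j};\IZ/2\IZ)\cong(\IZ/2\IZ)^2$. On the other hand, if $H^4_{\IS^1}(K_{i_0};\IZ)$ had no even torsion, its free rank $1$ (Proposition \ref{propprilimcritcohomcomp}) would force $H^4_{\IS^1}(K_{i_0};\IZ/2\IZ)\cong\IZ/2\IZ$, a contradiction.

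Your degree-$3$ Mayer--Vietoris sketch is also misdirected. The connecting map runs from $H^2(\partial U/\IS^1;\IZ/2\IZ)$ into $H^3_{\IS^1}(K_{i_0};\IZ/2\IZ)$, so it cannot \emph{hit} the class $x\otimes u\in H^3_{\IS^1}(C_j;\IZ/2\IZ)$ at all. What you need instead is a nonzero element in the kernel of the restriction
\[
H^3_{\IS^1}(U;\IZ/2\IZ)\oplus H^3(V/\IS^1;\IZ/2\IZ)\to H^3(\partial U/\IS^1;\IZ/2\IZ).
\]
You already observed that $E_2^{0,3}$ and $E_2^{2,1}$ survive, so $\dim H^3_{\IS^1}(U;\IZ/2\IZ)=2$. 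The action on $\partial U$ is free, so $\partial U/\IS^1$ is a closed connected $3$-manifold and its $H^3$ is one-dimensional. Thus $2>1$ gives the kernel element directly, with no comparison of fundamental classes needed. Finally, your fallback via Corollary \ref{nonzerodegree}, with an odd-index claim for the image of $H^5(K_{i_0};\IZ)$, is not supported by anything you state, and it is not needed.
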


\begin{proof}
By Proposition \ref{propprilimcritcohomcomp}, $H^{2*+1}_{\IS^1}(K_{i_0};\IZ)=0$ and $H^*_{\IS^1}(K_{i_0};\IZ)$ is finitely generated. Hence the universal coefficient theorem (in the form of Theorem 10 in Chapter 5 of \cite{spanier1989algebraic}) gives
\[
H^3(K_{i_0};\IZ/2\IZ)\cong
Tor_1(H^4_{\IS^1}(K_{i_0};\IZ);\IZ/2\IZ),
\]
so it suffices to show that $H^4_{\IS^1}(K_{i_0};\IZ)$ has a cyclic factor of even order.

Choose $\IS^1$-invariant neighborhoods $U_1,\ldots,U_k$ of $\tilde K_1^{q_1},\ldots,\tilde K_k^{q_k}\subset K_{i_0}$. Since $\IS^1$ acts freely on $K_{i_0}\setminus\bigcup_i\tilde K_i^{q_i}$, both relevant quotients have dimension at most $4$, and therefore, for $*\geq5$,
\[
H^*_{\IS^1}\!\left(K_{i_0}\setminus\bigcup_{i=1}^k\tilde K_i^{q_i};\IZ\right)
\cong
H^*\!\left(\left(\bigcup_{i=1}^kU_i\setminus\bigcup_{i=1}^k\tilde K_i^{q_i}\right)/\IS^1;\IZ\right)=0.
\]
Mayer--Vietoris thus yields
\[
H^6_{\IS^1}(K_{i_0};\IZ)
\cong H^6_{\IS^1}\!\left(\bigcup_{i=1}^kU_i;\IZ\right)
\cong\bigoplus_{i=0}^k H^6_{\IS^1}(\tilde K_i^{q_i};\IZ).
\]

If $i_0\equiv0\mod2$, then $i_0-1$ is odd, and by \eqref{connectionnumberisotropygroupsparity},
\begin{align*}
\sum_{i=1}^k(q_i-1)
&=\#\{K\subset\Lambda M:K\textup{ is a critical manifold of }E,\ \ind(K)<\ind(K_{i_0})\}\\
&=i_0-1\equiv1\mod2.
\end{align*}
Hence $q_j-1$ is odd for some $j$, and, since the fundamental group of each singular critical manifold has even order,
\[
1<2\mid(q_j,\#\pi_1(\tilde K_j^{q_j})),
\qquad
2\mid q_j\mid\frac{q_j^2}{(q_j,\#\pi_1(\tilde K_j^{q_j}))}.
\]
Theorem \ref{thmcohomologycritmanifolds} gives
\[
H^6_{\IS^1}(\tilde K_j^{q_j};\IZ)
\cong
\IZ/(q_j,\#\pi_1(\tilde K^{q_j}))\IZ
\oplus
\IZ/\frac{q_j^2}{(q_j,\#\pi_1(\tilde K_j^{q_j}))}.
\]
Since $H^7_{\IS^1}(\tilde K_j^{q_j};\IZ)=0$, the coefficient theorem implies
\[
H^6_{\IS^1}(\tilde K_j^{q_j};\IZ/2\IZ)
\cong H^6_{\IS^1}(\tilde K_j^{q_j};\IZ)\otimes\IZ/2\IZ
\cong(\IZ/2\IZ)^2.
\]
The Gysin sequence therefore gives a surjection
\[
H^4_{\IS^1}(K_{i_0};\IZ/2\IZ)
\longrightarrow H^6_{\IS^1}(\tilde K_j^{q_j};\IZ/2\IZ).
\]
If $H^4_{\IS^1}(K_{i_0};\IZ)$ had no cyclic factor of even order, then, using the universal coefficient theorem and that the free part $Free(H^4_{\IS^1}(K_{i_0};\IZ))$ of $H^4_{\IS^1}(K_{i_0};\IZ)$ is isomorphic to $\IZ$,
\begin{align*}
H^4_{\IS^1}(K_{i_0};\IZ/2\IZ)
&\cong H^4_{\IS^1}(K_{i_0};\IZ)\otimes\IZ/2\IZ
   \oplus Tor_1(H^5_{\IS^1}(K_{i_0};\IZ);\IZ/2\IZ)\\
&\cong H^4_{\IS^1}(K_{i_0};\IZ)\otimes\IZ/2\IZ
\cong Free(H^4_{\IS^1}(K_{i_0};\IZ))\otimes\IZ/2\IZ
\cong\IZ/2\IZ,
\end{align*}
contradicting the surjectivity of
\[
\cup e:H^4_{\IS^1}(K_{i_0};\IZ/2\IZ)
\longrightarrow H^6_{\IS^1}(K_{i_0};\IZ/2\IZ).
\]
\end{proof}

\begin{lemma}\label{lastevensingularlevelzero}
If $i_0$ is minimal such that $K_{i_0}$ is regular, then
\[
H^{2i_0}(\Lambda^{i_0-1};\IZ)\cong0.
\]
\end{lemma}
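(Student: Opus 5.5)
We argue directly from the structure of $\Lambda^{i_0-1}$ established in Corollary \ref{propbasicisos}, applied with $i=i_0-1$. If $i_0=1$, then $\Lambda^{0}$ deformation retracts onto the constant loops $K_0\cong M\cong\IS^3$, so $H^{2}(\Lambda^0;\IZ)=0$ and there is nothing more to show. We may therefore assume $i_0\geq2$, so that $0<i_0-1<i_0$ and Corollary \ref{propbasicisos} applies to $i=i_0-1$.

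Corollary \ref{propbasicisos} gives $H^*(\Lambda^{i};\IZ)=0$ for $*\geq 2i+4$. With $i=i_0-1$ we have $2i+4=2i_0+2$, but we need degree $2i_0=2i+2$. In that degree the corollary only says that $H^{2i+2}(\Lambda^i;\IZ)$ is finite cyclic. So the real work is to show this finite cyclic group vanishes.

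We use the fragment of the long exact sequence of $(\Lambda^i,\Lambda^{i-1})$ written in the proof of Corollary \ref{propbasicisos}:
\[
H^{2i+1}(\Lambda^{i-1};\IZ)\xrightarrow{\delta} H^{2i+2}(\Lambda^i,\Lambda^{i-1};\IZ)\cong\IZ/\#\pi_1(K_i)\IZ\longrightarrow H^{2i+2}(\Lambda^i;\IZ)\longrightarrow H^{2i+2}(\Lambda^{i-1};\IZ)=0.
\]
Here $H^{2i+2}(\Lambda^{i-1};\IZ)=0$ because $2i+2\geq 2(i-1)+4$. Hence $H^{2i+2}(\Lambda^i;\IZ)\cong\coker\delta$, and it suffices to show that $\delta$ is surjective onto the torsion group $\IZ/\#\pi_1(K_i)\IZ$.

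\textbf{Main obstacle.} The obstacle is identifying $H^{2i+1}(\Lambda^{i-1};\IZ)$ and $\delta$. Since $2i+1=2(i-1)+3$, Corollary \ref{propbasicisos} gives
\[
H^{2i+1}(\Lambda^{i-1};\IZ)\cong H^{2i+1}(\Lambda^{i-1},\Lambda^{i-2};\IZ)\cong\IZ,
\]
which is the top class of $K_{i-1}$. Equation \eqref{firstnontrivialiso} shows that restriction $H^{2i+1}(\Lambda^{i};\IZ)\to H^{2i+1}(\Lambda^{i-1};\IZ)$ is an isomorphism. Comparing with $H^{2i+1}(\Lambda M;\IZ)\cong\IZ$, however, that restriction is $\iota_2^{2i+1}$ for the pair $(\Lambda^{i},\Lambda^{i-1})$. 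Exactness then forces the next map $\delta$ to be zero, not surjective. So this direct route would give $H^{2i+2}(\Lambda^i;\IZ)\cong\IZ/\#\pi_1(K_i)\IZ$, which is nonzero. That would contradict the claim unless something else intervenes.

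\textbf{Plan B.} I therefore expect the intended argument to be different. Pass to the pair $(\Lambda^{i_0},\Lambda^{i_0-1})$ and use integral perfectness in degree $2i_0$:
\[
H^{2i_0}(\Lambda^{i_0},\Lambda^{i_0-1};\IZ)\cong H^0(K_{i_0};\IZ)\cong\IZ\longrightarrow H^{2i_0}(\Lambda^{i_0};\IZ)\longrightarrow H^{2i_0}(\Lambda^{i_0-1};\IZ)\longrightarrow H^{2i_0+1}(\Lambda^{i_0},\Lambda^{i_0-1};\IZ)\cong H^1(K_{i_0};\IZ)=0.
\]
By Corollary \ref{totalcohomiso}, applied in the nonequivariant form, $H^{2i_0}(\Lambda^{i_0};\IZ)\cong H^{2i_0}(\Lambda M;\IZ)\cong\IZ$, since the next index is at least $2i_0+4$ by \eqref{indexformula2}. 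The task then reduces to showing that the generator of $H^{2i_0}(\Lambda M;\IZ)$ is the image of the Thom class of $K_{i_0}$, i.e.\ that the first map is surjective; exactness then gives $H^{2i_0}(\Lambda^{i_0-1};\IZ)=0$.

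For that surjectivity step I would try the cup-product structure. The map $\cup p^*([M])$ on $H^*(\Lambda M;\IZ)\cong H^*(\Omega\IS^3;\IZ)\otimes H^*(\IS^3;\IZ)$ sends a generator of degree $2i_0$ to one of degree $2i_0+3$. On the level $i_0$, by Proposition \ref{propmappingdegreecorespondence}, this map corresponds to $\pi_M^*[M]$ on $H^0(T^1M;\IZ)\to H^3(T^1M;\IZ)$, which is an isomorphism for $T^1M\cong\IS^3\times\IS^2$. Meanwhile degree $2i_0+3$ sees only $K_{i_0}$. This forces the degree-$2i_0$ generator to come from the Thom class, which gives the vanishing.
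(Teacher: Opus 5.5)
Your Plan B is the paper's proof. The paper puts $j\colon H^{2i_0}(\Lambda^{i_0},\Lambda^{i_0-1};\IZ)\to H^{2i_0}(\Lambda^{i_0};\IZ)$ into a commutative square with the analogous map in degree $2i_0+3$, the vertical maps being $\cup p^*([M])$. Three of the four maps are isomorphisms:
\begin{itemize}
\item The relative cup product is an isomorphism by Proposition~\ref{propmappingdegreecorespondence}, because $\Phi(K_{i_0})=T^1M$ and $\pi_M^*([M])$ generates $H^3(T^1M;\IZ)$.
\item The degree-$(2i_0+3)$ map is an isomorphism because $H^{2i_0+2}(\Lambda^{i_0-1};\IZ)=H^{2i_0+3}(\Lambda^{i_0-1};\IZ)=0$. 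This is your ``degree $2i_0+3$ sees only $K_{i_0}$''.
\item The absolute cup product is an isomorphism.
\end{itemize}
So $j$ is onto, and exactness with $H^{2i_0+1}(\Lambda^{i_0},\Lambda^{i_0-1};\IZ)=0$ finishes.

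Your sketch glosses over that last isomorphism. Corollary~\ref{totalcohomiso} identifies $H^*(\Lambda^{i_0};\IZ)$ with $H^*(\Lambda M;\IZ)$ only up to degree $2i_0+2$, so the paper handles degree $2i_0+3$ separately through the pair $(\Lambda^{i_0+1},\Lambda^{i_0})$. You can also avoid it: $(\cup p^*([M]))\circ j$ equals a composite of two isomorphisms $\IZ\to\IZ$. Hence $\im j$ is a nonzero direct summand of $H^{2i_0}(\Lambda^{i_0};\IZ)\cong\IZ$, and so is all of it.

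What has to go is your ``main obstacle'' paragraph. As written it derives the negation of the lemma and leaves that unresolved. The error is an index shift: \eqref{firstnontrivialiso} reads $H^{2i'+1}(\Lambda^{i'+1};\IZ)\cong H^{2i'+1}(\Lambda^{i'};\IZ)$. For the pair $(\Lambda^i,\Lambda^{i-1})$ it therefore gives an isomorphism in degree $2i-1$, not $2i+1$. In degree $2i+1$ you only know, from $H^{2i+1}(\Lambda^i,\Lambda^{i-1};\IZ)=0$, that the restriction $\IZ\cong H^{2i+1}(\Lambda^i;\IZ)\to H^{2i+1}(\Lambda^{i-1};\IZ)\cong\IZ$ is multiplication by some $d\neq0$. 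Then $\im\delta\cong\IZ/d\IZ$, and for $i=i_0-1$ the lemma is equivalent to $|d|=\#\pi_1(K_{i_0-1})$. This route by itself does not determine $d$, which is why one has to go up to the regular level.

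The case $i=1$ is different. There the restriction $H^3(\Lambda^1;\IZ)\to H^3(\Lambda^0;\IZ)\cong H^3(M;\IZ)$ really is an isomorphism, since $p^*([M])$ hits the generator. So your computation $H^4(\Lambda^1;\IZ)\cong\IZ/\#\pi_1(K_1)\IZ\neq0$ is right in that case. Combined with the lemma, it simply shows that $i_0=2$ cannot occur. This is consistent with Proposition~\ref{propoddisotropyorder}, which forces $i_0$ to be odd.
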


\begin{proof}
It suffices to show that the inclusion induces an isomorphism
\[
H^{2i_0}(\Lambda^{i_0},\Lambda^{i_0-1};\IZ)
\longrightarrow H^{2i_0}(\Lambda^{i_0};\IZ),
\]
because then the result follows from the long exact sequence of the pair and $H^{2i_0+1}(\Lambda^{i_0},\Lambda^{i_0-1};\IZ)=0$. Consider
\begin{equation}\label{diagramlastsublevelsetproof}
\resizebox{\textwidth}{!}{
\begin{tikzpicture}[commutative diagrams/every diagram]
\matrix[matrix of math nodes, name=m, column sep=.5cm, commutative diagrams/every cell, ampersand replacement=\&] {
{} \& H^{2i_0+3}(\Lambda^{i_0},\Lambda^{i_0-1};\IZ) \& H^{2i_0+3}(\Lambda^{i_0};\IZ) \& {} \\
{} \& H^{2i_0}(\Lambda^{i_0},\Lambda^{i_0-1};\IZ) \& H^{2i_0}(\Lambda^{i_0};\IZ) \& H^{2i_0}(\Lambda^{i_0-1};\IZ) \& H^{2i_0+1}(\Lambda^{i_0},\Lambda^{i_0-1};\IZ)\cong0\\
};
\path[commutative diagrams/.cd, every arrow, every label]
(m-1-1) edge node{$0$} (m-1-2)
(m-1-2) edge (m-1-3)
(m-1-3) edge node{$0$} (m-1-4)
(m-2-1) edge (m-2-2)
(m-2-2) edge (m-2-3)
(m-2-3) edge (m-2-4)
(m-2-4) edge (m-2-5)
(m-2-2) edge node{$\cup p^*([M])$} (m-1-2)
(m-2-3) edge node{$\cup p^*([M])$} (m-1-3);
\end{tikzpicture}}
\end{equation}
The rows come from the long exact sequence of the pair and the vertical maps are cup product with $p^*([M])$. We show that both vertical maps are isomorphisms.

For the second column, the diffeomorphism $M\cong\IS^3$ and the decomposition
\[
\Lambda=\Omega_mM\times M
\]
imply by K\"unneth that
\begin{align}\label{equationfundcupfreeloopspace}
\cdot\cup p^*([M]):H^{2*}(\Lambda M;\IZ)\longrightarrow H^{2*+3}(\Lambda M;\IZ)
\end{align}
is an isomorphism for every $*\in\IZ$. Since $K_{i_0}$ is regular, the index gap theorem in \cite{radeschi2017berger} gives no critical manifold of index $\ind(K_{i_0})+2$ and hence $\ind(K_{i_0+1})=2i_0+4$. Therefore the inclusions induce isomorphisms
\begin{align}
H^*(\Lambda M;\IZ)&\longrightarrow H^*(\Lambda^{i_0};\IZ), && *\leq2i_0+2,\label{equationisomorphimfinalloopspacegroup}\\
H^*(\Lambda M;\IZ)&\longrightarrow H^*(\Lambda^{i_0+1};\IZ), && *\leq2i_0+4.
\end{align}
It remains to obtain the first isomorphism also in degree $2i_0+3$. The long exact sequence of $(\Lambda^{i_0+1},\Lambda^{i_0})$ contains
\begin{equation}\label{equationexactsequencebelowreg}
\begin{aligned}
0\cong H^{2i_0+3}(\Lambda^{i_0+1},\Lambda^{i_0};\IZ)
&\longrightarrow H^{2i_0+3}(\Lambda^{i_0+1};\IZ)\\
&\longrightarrow H^{2i_0+3}(\Lambda^{i_0};\IZ)
\longrightarrow H^{2i_0+4}(\Lambda^{i_0+1},\Lambda^{i_0};\IZ).
\end{aligned}
\end{equation}
By \eqref{equationisomorphimfinalloopspacegroup},
\[
H^{2i_0+3}(\Lambda^{i_0+1};\IZ)\cong H^{2i_0+3}(\Lambda;\IZ)\cong\IZ.
\]
The long exact sequence of $(\Lambda^{i_0},\Lambda^{i_0-1})$ also gives
\[
\IZ\cong H^{2i_0+3}(\Lambda^{i_0},\Lambda^{i_0-1};\IZ)
\cong H^{2i_0+3}(\Lambda^{i_0};\IZ),
\]
and
\[
H^0(K_{i_0};\IZ)\cong H^{2i_0+4}(\Lambda^{i_0+1},\Lambda^{i_0};\IZ)\cong\IZ.
\]
Exactness of \eqref{equationexactsequencebelowreg} therefore implies that
\begin{align}\label{equationremainingiso}
H^{2i_0+3}(\Lambda^{i_0+1};\IZ)
\longrightarrow H^{2i_0+3}(\Lambda^{i_0};\IZ)
\end{align}
is an isomorphism. Together with \eqref{equationisomorphimfinalloopspacegroup}, the inclusion induces an isomorphism
\[
H^{2i_0+3}(\Lambda;\IZ)\longrightarrow H^{2i_0+3}(\Lambda^{i_0};\IZ).
\]
By \eqref{equationfundcupfreeloopspace}, therefore,
\begin{align}\label{equationsecondcolumniso}
\cdot\cup p^*([M]):H^{2i_0}(\Lambda^{i_0};\IZ)
\longrightarrow H^{2i_0+3}(\Lambda^{i_0};\IZ).
\end{align}

For the first column, use Proposition \ref{propmappingdegreecorespondence}, the fact that $M$ is diffeomorphic to $M$, and the isomorphism
\[
\cdot\cup[\IS^3]:H^0(\IS^3\times\IS^2;\IZ)
\longrightarrow H^3(\IS^3\times\IS^2;\IZ).
\]
Since $K_{i_0}$ is the first regular critical manifold, the diffeomorphism
\[
\Phi:K_{i_0}\longrightarrow T^1M,
\qquad c\longmapsto\frac{\dot c(0)}{\sqrt{E(c)}}
\]
shows that
\[
\cdot\cup\pi_M^*([M]):H^0(T^1M;\IZ)\longrightarrow H^3(T^1M;\IZ)
\]
is an isomorphism. Proposition \ref{propmappingdegreecorespondence} provides isomorphisms
\[
\begin{aligned}
k_1&:H^3(K_{i_0};\IZ)\longrightarrow H^{2i_0+3}(\Lambda^{i_0},\Lambda^{i_0-1};\IZ),\\
k_2&:H^0(K_{i_0};\IZ)\longrightarrow H^{2i_0}(\Lambda^{i_0},\Lambda^{i_0-1};\IZ),
\end{aligned}
\]
for which the diagram
\begin{center}
\begin{tikzpicture}[commutative diagrams/every diagram]
\matrix[matrix of math nodes, name=m, column sep=1cm, row sep=1cm, commutative diagrams/every cell] {
H^3(K_{i_0};\IZ) & H^{2i_0+3}(\Lambda^{i_0},\Lambda^{i_0-1};\IZ) & {} \\
H^0(K_{i_0};\IZ) & H^{2i_0}(\Lambda^{i_0},\Lambda^{i_0-1};\IZ)\\
};
\path[commutative diagrams/.cd, every arrow, every label]
(m-1-1) edge node{$k_1$} (m-1-2)
(m-2-1) edge node{$k_2$} (m-2-2)
(m-2-1) edge node{$\cup p^*([M])$} (m-1-1)
(m-2-2) edge node{$\cup p^*([M])$} (m-1-2);
\end{tikzpicture}
\end{center}
commutes up to sign. Since the left vertical map is an isomorphism, so is
\begin{align}\label{equationfirstcolumniso}
\cup p^*([M]):H^{2i_0}(\Lambda^{i_0},\Lambda^{i_0-1};\IZ)
\longrightarrow H^{2i_0+3}(\Lambda^{i_0},\Lambda^{i_0-1};\IZ).
\end{align}

Since both vertical maps in \eqref{diagramlastsublevelsetproof} are isomorphisms, exactness of its rows implies that
\[
H^{2i_0}(\Lambda^{i_0},\Lambda^{i_0-1};\IZ)
\longrightarrow H^{2i_0}(\Lambda^{i_0};\IZ)
\]
is an isomorphism. Hence $H^{2i_0}(\Lambda^{i_0-1};\IZ)=0$.
\end{proof}

By Theorem 9.1 of \cite{bokstedt2005spectral}, for $s\geq3$ the Poincar\'e series of $H^*_{\IS^1}(\Lambda\IS^s;\IZ/2\IZ)$ is
\[
P_{\IS^1}(\Lambda\IS^s,\IZ/2\IZ,t)
=\frac{1}{1-t^{2(s-1)}}\left(t^{s-1}+\frac{1+t^{2s-1}}{1-t^2}\right).
\]
From this we obtain that, for $i\geq 4$, the Betti numbers $b_i$ satisfy the recursion formula
\begin{align*}
b_i=\begin{cases} b_{i-2}+1,~~&i\equiv 2\mod 4,\\
b_{i-2},~~&i\equiv 0\mod 4,\\
b_{i+1}-2,~~&i\equiv 1\mod 2.
\end{cases}
\end{align*}
For $i>2$, this implies in particular that
\begin{align}\label{eq:firstequivalence}
\dim H^{2i+2} _{\IS^1}(\Lambda M;\IZ/2\IZ)> \dim H^{2i} _{\IS^1}(\Lambda M;\IZ/2\IZ)\Leftrightarrow i\equiv 0 \mod 2.
\end{align}
For the same range of indices,
\begin{align}\label{eq:secondequivalence}
\dim H^{2i+3} _{\IS^1}(\Lambda M;\IZ/2\IZ)>\dim H^{2i+1}  _{\IS^1}(\Lambda M;\IZ/2\IZ)\Leftrightarrow i\equiv 1 \mod 2.
\end{align}

\begin{prop}\label{propoddisotropyorder}
The index of the first regular critical manifold $K_{i_0}$ satisfies $\ind (K_{i_0})\equiv 2 \mod 4$.
\end{prop}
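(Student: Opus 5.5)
The plan is to argue by contradiction. Since $\ind(K_i)=2i$ for $i\leq i_0$, the claim $\ind(K_{i_0})\equiv 2\mod 4$ is equivalent to $i_0$ being odd. So assume $i_0\equiv 0\mod 2$. Lemma \ref{lemregularleveltwotorsion} then gives $H^3_{\IS^1}(K_{i_0};\IZ/2\IZ)\neq 0$. By the equivariant Thom isomorphism (\ref{sublevelfinalthomisoequiv}) this yields
\[
H^{2i_0+3}_{\IS^1}(\Lambda^{i_0},\Lambda^{i_0-1};\IZ/2\IZ)\cong H^{3}_{\IS^1}(K_{i_0};\IZ/2\IZ)\neq0 .
\]
I would compare this "extra" odd-degree class with the growth of the mod $2$ Betti numbers of $H^*_{\IS^1}(\Lambda M;\IZ/2\IZ)$ given by \eqref{eq:firstequivalence} and \eqref{eq:secondequivalence} for $i=i_0$ even. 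These say that $\dim H^{2i_0+3}_{\IS^1}\leq \dim H^{2i_0+1}_{\IS^1}$, while $\dim H^{2i_0+2}_{\IS^1}>\dim H^{2i_0}_{\IS^1}$.

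First I would transfer the global groups to sublevel sets. By the index gap formula $\ind(K_{i_0+1})=2i_0+4$. Hence, as in Corollary \ref{totalcohomiso} and the proof of Lemma \ref{lastevensingularlevelzero}, the restriction $H^*_{\IS^1}(\Lambda M;\IZ/2\IZ)\to H^*_{\IS^1}(\Lambda^{i_0};\IZ/2\IZ)$ is an isomorphism for $*\leq 2i_0+2$ and injective in degree $2i_0+3$. Since all relative groups of $(\Lambda^{i_0},\Lambda^{i_0-1})$ live in degrees $\geq 2i_0$, we also get $H^*_{\IS^1}(\Lambda^{i_0-1})\cong H^*_{\IS^1}(\Lambda M)$ for $*\leq 2i_0-2$.

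Next, write out the long exact sequence of $(\Lambda^{i_0},\Lambda^{i_0-1})$ in degrees $2i_0,\dots,2i_0+3$ with $\IZ/2\IZ$ coefficients. The relative terms are $H^{0},H^{1},H^{2},H^{3}$ of $K_{i_0}$. These are computed from the integral result of Proposition \ref{propprilimcritcohomcomp} (odd integral groups vanish, $H^0=\IZ$, $H^2=\IZ^2$) together with the universal coefficient theorem, which gives dimensions $1,0,2,\geq1$. The terms $H^*_{\IS^1}(\Lambda^{i_0-1};\IZ/2\IZ)$ are built, via the Morse sequences of the singular levels $K_1,\dots,K_{i_0-1}$, from the groups of the second theorem of the preceding section. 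I would use Lemma \ref{lastevensingularlevelzero}, $H^{2i_0}(\Lambda^{i_0-1};\IZ)=0$, to pin down $\Lambda^{i_0-1}$ near the top degrees. Its ordinary cohomology vanishes in degrees $\geq 2i_0$ except for the torsion/top classes of $K_{i_0-1}$. The Gysin sequence of $\Lambda^{i_0-1}\times E\IS^1\to \Lambda^{i_0-1}\times_{\IS^1}E\IS^1$ then shows that $\cup e$ is an isomorphism $H^{2i_0+1}_{\IS^1}(\Lambda^{i_0-1})\to H^{2i_0+3}_{\IS^1}(\Lambda^{i_0-1})$ and surjective in the even degrees just below. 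This forces the connecting maps out of $H^{*}_{\IS^1}(\Lambda^{i_0-1})$ into the relative groups to be compatible with $\cup e$.

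Feeding this into the exact sequence, I would show that the nonzero class in $H^{2i_0+3}_{\IS^1}(\Lambda^{i_0},\Lambda^{i_0-1};\IZ/2\IZ)$ survives to $H^{2i_0+3}_{\IS^1}(\Lambda^{i_0};\IZ/2\IZ)$, and in fact into $H^{2i_0+3}_{\IS^1}(\Lambda M;\IZ/2\IZ)$. It does so on top of an image of $H^{2i_0+3}_{\IS^1}(\Lambda^{i_0-1})\cong H^{2i_0+1}_{\IS^1}(\Lambda^{i_0-1})$, giving $\dim H^{2i_0+3}_{\IS^1}(\Lambda M)>\dim H^{2i_0+1}_{\IS^1}(\Lambda M)$. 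This contradicts \eqref{eq:secondequivalence} for $i_0$ even. As a cross-check, the even-degree count (dimension $2$ of $H^2_{\IS^1}(K_{i_0})$ versus the jump predicted by \eqref{eq:firstequivalence}) should be consistent only with $i_0$ odd.

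The main obstacle is the bookkeeping of the connecting homomorphisms $H^{*}_{\IS^1}(\Lambda^{i_0-1};\IZ/2\IZ)\to H^{*+1}_{\IS^1}(\Lambda^{i_0},\Lambda^{i_0-1};\IZ/2\IZ)$ in degrees $2i_0+1$ and $2i_0+2$. Specifically, one must rule out that the new degree-$3$ class of $K_{i_0}$ is killed by a class coming from $\Lambda^{i_0-1}$. I would attack this first by the naturality of $\cup e$ and $\cup p^*([M])$, as in the proof of Lemma \ref{lastevensingularlevelzero}: the integral vanishing $H^{2i_0}(\Lambda^{i_0-1};\IZ)=0$, combined with the Gysin sequence, should show the relevant connecting map vanishes mod $2$.
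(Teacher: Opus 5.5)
Your overall strategy is the paper's, run in the dual direction. Assuming $i_0$ even, you play the nonzero group $H^{2i_0+3}_{\IS^1}(\Lambda^{i_0},\Lambda^{i_0-1};\IZ/2\IZ)\cong H^3_{\IS^1}(K_{i_0};\IZ/2\IZ)$ from Lemma \ref{lemregularleveltwotorsion} against the equality $\dim H^{2i_0+3}_{\IS^1}(\Lambda M)=\dim H^{2i_0+1}_{\IS^1}(\Lambda M)$ given by \eqref{eq:secondequivalence}. The paper instead uses that equality to force the relative group to vanish.

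The genuine gap is your final transfer to $\Lambda M$ in degree $2i_0+3$. Restriction goes from $H^{2i_0+3}_{\IS^1}(\Lambda M)$ to $H^{2i_0+3}_{\IS^1}(\Lambda^{i_0})$, so a class on $\Lambda^{i_0}$ cannot \emph{survive into} $\Lambda M$. The injectivity you record only gives $\dim H^{2i_0+3}_{\IS^1}(\Lambda M)\le\dim H^{2i_0+3}_{\IS^1}(\Lambda^{i_0})$, which is the wrong inequality. What you need is surjectivity, i.e.\ that $H^{2i_0+3}_{\IS^1}(\Lambda^{i_0})\to H^{2i_0+4}_{\IS^1}(\Lambda^{i_0+1},\Lambda^{i_0})\cong H^0_{\IS^1}(K_{i_0+1};\IZ/2\IZ)\cong\IZ/2\IZ$ vanishes. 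Lemma \ref{lemregularleveltwotorsion} only guarantees one extra dimension, and this cokernel may a priori be one-dimensional, so nothing is contradicted without it. The paper supplies this step separately. Non-equivariantly, $H^{2i_0+3}(\Lambda^{i_0+1};\IZ/2\IZ)\to H^{2i_0+3}(\Lambda^{i_0};\IZ/2\IZ)$ is an injection between two copies of $\IZ/2\IZ$ (K\"unneth for $\Lambda\IS^3$ on one side, $H^3(K_{i_0})$ on the other), hence an isomorphism. The five lemma on the Gysin sequences of $\Lambda^{i_0+1}$ and $\Lambda^{i_0}$, diagram \eqref{isotoropybeweisgysindiagram6}, then upgrades it to the equivariant isomorphism.

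The step you flag as the main obstacle is also left open. It closes with the tools you name, but the input needed is $H^{2i_0+2}(\Lambda^{i_0-1};\IZ/2\IZ)=0$ (Corollary \ref{propbasicisos} plus universal coefficients), rather than Lemma \ref{lastevensingularlevelzero}. The argument runs as follows:
\begin{itemize}
\item the connecting map $\delta$ of $(\Lambda^{i_0},\Lambda^{i_0-1})$ is $H^*(B\IS^1)$-linear;
\item $\cup e\colon H^{2i_0}_{\IS^1}(\Lambda^{i_0-1})\to H^{2i_0+2}_{\IS^1}(\Lambda^{i_0-1})$ is onto by the Gysin sequence;
\item $\delta$ vanishes in degree $2i_0$ because its target $H^1_{\IS^1}(K_{i_0};\IZ/2\IZ)$ is zero.
\end{itemize}
Hence $\delta=0$ in degree $2i_0+2$, which is the paper's argument in substance.

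With both pieces in place your count goes through, and it is then a bit leaner than the paper's. Take $r\colon H^{2i_0+3}_{\IS^1}(\Lambda^{i_0})\to H^{2i_0+3}_{\IS^1}(\Lambda^{i_0-1})$. Its kernel is a copy of $H^3_{\IS^1}(K_{i_0};\IZ/2\IZ)$. Its image contains $e\cdot r\bigl(H^{2i_0+1}_{\IS^1}(\Lambda^{i_0})\bigr)$, which has dimension $\dim H^{2i_0+1}_{\IS^1}(\Lambda^{i_0})$, because $r$ is injective in degree $2i_0+1$ and $\cup e$ is injective on $H^{2i_0+1}_{\IS^1}(\Lambda^{i_0-1})$. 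So you never need the isomorphism \eqref{isotropyorderbeweisequation1}, nor \eqref{eq:secondequivalence} at $i=i_0-1$.
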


\begin{proof} In this proof, we abbreviate $\Lambda M$ by $\Lambda$. If $X$ is an $\IS^1$-space, we write $H^{*}(X)$ and $H^{*} _{\IS^1}(X)$ for the cohomology groups $H^{*}(X;\IZ/2\IZ)$ and $H^{*} _{\IS^1}(X;\IZ/2\IZ)$. Assume that $\ind(K_{i_0})=2i_0\equiv 0 \mod 4$, so $i_0\equiv 0\mod 2$. We use the Gysin sequence to show that $H^3 _{\IS^1}(K_{i_0};\IZ/2\IZ)$ vanishes, which, by Lemma \ref{lemregularleveltwotorsion}, contradicts the assumption $i_0\equiv 0\mod 2$. To this end, we show that the connecting map
\begin{align*}
H^{2i_0+2} _{\IS^1}(\Lambda^{i_0-1})\rightarrow H^{2i_0+3} _{\IS^1}(\Lambda^{i_0},\Lambda^{i_0-1})
\end{align*}
in the long exact sequence of the pair $(\Lambda^{i_0},\Lambda^{i_0-1})$ is zero and that the map
\begin{align*}
H^{2i_0+3} _{\IS^1}(\Lambda^{i_0})\rightarrow H^{2i_0+3} _{\IS^1}(\Lambda^{i_0-1})
\end{align*}
is an isomorphism.\\

By the index gap formula, there is no critical manifold of index $\ind(K_{i_0})+2$, so the inclusion
\begin{align}\label{indexgapformulainclusion}
\Lambda^{i_0}\rightarrow \Lambda
\end{align}
induces isomorphisms $H^{*}(\Lambda)\rightarrow H^{*}(\Lambda^{i_0})$ and $H^{*} _{\IS^1}(\Lambda)\rightarrow H^{*} _{\IS^1}(\Lambda^{i_0})$ for $*\leq 2i_0+2$. This gives the commutative diagram
\begin{equation}\label{isotoropybeweisgysindiagram2}
\resizebox{\textwidth}{!}{
\begin{tikzpicture}[commutative diagrams/every diagram]
\matrix[matrix of math nodes, name=m, column sep= 0.5cm, commutative diagrams/every cell, ampersand replacement =\&] {\& H^{2i_0-1} _{\IS^1}(\Lambda^{i_0},\Lambda^{i_0-1})\cong 0 \& \&  H^{2i_0+1} (\Lambda^{i_0},\Lambda^{i_0-1})\cong 0
\\H^{2i_0}(\Lambda^{i_0}) \& H^{2i_0-1} _{\IS^1}(\Lambda^{i_0})\& H^{2i_0+1} _{\IS^1}(\Lambda^{i_0}) \&H^{2i_0+1}(\Lambda^{i_0})\& H^{2i_0} _{\IS^1}(\Lambda^{i_0})\\
H^{2i_0}(\Lambda^{i_0-1}) \& H^{2i_0-1} _{\IS^1}(\Lambda^{i_0-1})\& H^{2i_0+1} _{\IS^1}(\Lambda^{i_0-1}) \& H^{2i_0+1}(\Lambda^{i_0-1})\& H^{2i_0} _{\IS^1}(\Lambda^{i_0-1})\\
};
\path[commutative diagrams/.cd, every arrow, every label]
(m-2-1) edge (m-2-2)
(m-2-2) edge (m-2-3)
(m-2-3) edge (m-2-4)
(m-2-4) edge (m-2-5)

(m-3-1) edge (m-3-2)
(m-3-2) edge (m-3-3)
(m-3-3) edge (m-3-4)
(m-3-4) edge (m-3-5)

(m-1-2) edge (m-2-2)
(m-1-4) edge (m-2-4)

(m-2-1) edge (m-3-1)
(m-2-2) edge (m-3-2)
(m-2-3) edge (m-3-3)
(m-2-4) edge (m-3-4)
(m-2-5) edge (m-3-5);
\end{tikzpicture}}
\end{equation}
whose rows are Gysin sequences and whose columns are pair sequences. We first show that the three middle vertical maps in (\ref{isotoropybeweisgysindiagram2}) are isomorphisms.
By assumption, $i_0-1\equiv 1 \mod 2$. The equivalence in (\ref{eq:secondequivalence}) therefore gives
\begin{align*}
\dim H^{2i_0+1} _{\IS^1}(\Lambda^{i_0})>\dim H^{2i_0-1} _{\IS^1}(\Lambda^{i_0}).
\end{align*}
Exactness of the rows in diagram \ref{isotoropybeweisgysindiagram2} implies that the maps $H^{2i_0+1} (\Lambda^{i_0})\rightarrow H^{2i_0} _{\IS^1}(\Lambda^{i_0})$ \\and $H^{2i_0} (\Lambda^{i_0})\rightarrow H^{2i_0-1} _{\IS^1}(\Lambda^{i_0})$ are zero. Lemma \ref{lastevensingularlevelzero} gives the isomorphism $H^{2i_0}(\Lambda^{i_0-1};\IZ)\cong 0$. Proposition \ref{propprilimcritcohomcomp} gives the isomorphisms $H^{2i_0-1}(\Lambda^{i_0-1};\IZ)\cong \IZ$ and $ H^{2i_0+1}(\Lambda^{i_0-1};\IZ)\cong \IZ$. Since $H^{*}(\Lambda^{i_0};\IZ)$ is finitely generated and $H^{2i_0+1}(\Lambda^{i_0};\IZ) $ is free, the universal coefficient theorem for cohomology gives
\begin{align}\label{eqzerolastsublevel}
H^{2i_0}(\Lambda^{i_0-1};\IZ/2\IZ)\cong H^{2i_0}(\Lambda^{i_0-1};\IZ)\otimes \IZ/2\IZ=0
\end{align}
and hence
\begin{align}\label{eqfreelastsublevel}
H^{2i_0-1}(\Lambda^{i_0-1})= H^{2i_0-1}(\Lambda^{i_0-1};\IZ/2\IZ)\cong H^{2i_0-1}(\Lambda^{i_0-1};\IZ)\otimes \IZ/2\IZ\cong \IZ/2\IZ.
\end{align}
The long exact sequence of the pair $(\Lambda^{i_0-1},\Lambda^{i_0-2})$ gives an isomorphism $H^{2i_0+1}(\Lambda^{i_0-1})\cong \IZ/2\IZ$. Exactness of the columns then implies that the vertical map
$$H^{2i_0+1} (\Lambda^{i_0})\rightarrow H^{2i_0+1} (\Lambda^{i_0-1})$$
is an isomorphism. Exactness of the rows and commutativity also imply that the connecting map in the Gysin sequence
$$H^{2i_0+1}(\Lambda^{i_0-1})\rightarrow H^{2i_0} _{\IS^1}(\Lambda^{i_0-1})$$
 is $0$. The map $H^{2i_0-1} _{\IS^1}(\Lambda^{i_0})\rightarrow H^{2i_0-1} _{\IS^1}(\Lambda^{i_0-1})$ is also an isomorphism. This follows from the long exact sequence of the pair $(\Lambda^{i_0},\Lambda^{i_0-1})$ and the Gysin sequence. Consider the diagram
\begin{equation}
\resizebox{\textwidth}{!}{
\begin{tikzpicture}[commutative diagrams/every diagram]
\matrix[matrix of math nodes, name=m, column sep= 0.5cm, commutative diagrams/every cell, ampersand replacement =\&] {\& H^{2i_0-3} _{\IS^1}(\Lambda^{i_0},\Lambda^{i_0-1})\cong 0 \& \&  H^{2i_0-1} (\Lambda^{i_0},\Lambda^{i_0-1})\cong 0
\\H^{2i_0-2}(\Lambda^{i_0}) \& H^{2i_0-3} _{\IS^1}(\Lambda^{i_0})\& H^{2i_0-1} _{\IS^1}(\Lambda^{i_0}) \&H^{2i_0-1}(\Lambda^{i_0})\& H^{2i_0-2} _{\IS^1}(\Lambda^{i_0})\\
H^{2i_0-2}(\Lambda^{i_0-1}) \& H^{2i_0-3} _{\IS^1}(\Lambda^{i_0-1})\& H^{2i_0-1} _{\IS^1}(\Lambda^{i_0-1}) \& H^{2i_0-1}(\Lambda^{i_0-1})\& H^{2i_0-2} _{\IS^1}(\Lambda^{i_0-1})\\
};
\path[commutative diagrams/.cd, every arrow, every label]
(m-2-1) edge (m-2-2)
(m-2-2) edge (m-2-3)
(m-2-3) edge (m-2-4)
(m-2-4) edge (m-2-5)

(m-3-1) edge (m-3-2)
(m-3-2) edge (m-3-3)
(m-3-3) edge (m-3-4)
(m-3-4) edge (m-3-5)

(m-1-2) edge (m-2-2)
(m-1-4) edge (m-2-4)

(m-2-1) edge (m-3-1)
(m-2-2) edge (m-3-2)
(m-2-3) edge (m-3-3)
(m-2-4) edge (m-3-4)
(m-2-5) edge (m-3-5);
\end{tikzpicture}}
\end{equation}
whose rows are Gysin sequences and whose columns are pair sequences. Since $H^{*} _{\IS^1}(\Lambda^{i_0},\Lambda^{i_0-1})\cong 0$ for $*\leq 2i_0=\ind (K_{i_0})$, exactness of the columns implies that the two leftmost vertical maps and the rightmost vertical map are isomorphisms. The inclusion $\Lambda^{i_0}\rightarrow \Lambda$ induces isomorphisms in cohomology for $*\leq 2i_0$, so we obtain an isomorphism $H^{2i_0-1}(\Lambda^{i_0})\cong \IZ/2\IZ$. Equation (\ref{eqfreelastsublevel}) gives an isomorphism $H^{2i_0-1}(\Lambda^{i_0-1})\cong \IZ/2\IZ$. Exactness of the columns then shows that the map $H^{2i_0-1} (\Lambda^{i_0})\rightarrow H^{2i_0-1} (\Lambda^{i_0-1})$ is an isomorphism. The five lemma therefore implies that the map
$$H^{2i_0-1} _{\IS^1}(\Lambda^{i_0})\rightarrow H^{2i_0-1} _{\IS^1}(\Lambda^{i_0-1})$$ 
is an isomorphism.\\

Thus, in diagram (\ref{isotoropybeweisgysindiagram2}), we can replace the leftmost and rightmost columns by columns of zeros to obtain the diagram
\begin{equation}\label{isotoropybeweisgysindiagram3}
\begin{tikzpicture}[commutative diagrams/every diagram]
\matrix[matrix of math nodes, name=m, column sep= 0.5cm, commutative diagrams/every cell] {&  0 & &  0
\\0 & H^{2i_0-1} _{\IS^1}(\Lambda^{i_0})& H^{2i_0+1} _{\IS^1}(\Lambda^{i_0}) &H^{2i_0+1}(\Lambda^{i_0})& 0\\
0 & H^{2i_0-1} _{\IS^1}(\Lambda^{i_0-1})& H^{2i_0+1} _{\IS^1}(\Lambda^{i_0-1}) &H^{2i_0+1}(\Lambda^{i_0-1})& 0\\
};
\path[commutative diagrams/.cd, every arrow, every label]
(m-2-1) edge (m-2-2)
(m-2-2) edge (m-2-3)
(m-2-3) edge (m-2-4)
(m-2-4) edge (m-2-5)

(m-3-1) edge (m-3-2)
(m-3-2) edge (m-3-3)
(m-3-3) edge (m-3-4)
(m-3-4) edge (m-3-5)

(m-1-2) edge (m-2-2)
(m-1-4) edge (m-2-4)

(m-2-1) edge (m-3-1)
(m-2-2) edge node {$\cong$}(m-3-2)
(m-2-3) edge (m-3-3)
(m-2-4) edge node {$\cong$}(m-3-4)
(m-2-5) edge (m-3-5);
\end{tikzpicture}
\end{equation}
which is commutative and has exact rows. By the five lemma, the map
\begin{align}\label{isotropyorderbeweisequation1}
H^{2i_0+1} _{\IS^1}(\Lambda^{i_0})\rightarrow H^{2i_0+1} _{\IS^1}(\Lambda^{i_0-1})
\end{align}
is an isomorphism. \\

Consider the diagram 
\begin{equation}\label{isotoropybeweisgysindiagram4}
\resizebox{\textwidth}{!}{
\begin{tikzpicture}[commutative diagrams/every diagram]
\matrix[matrix of math nodes, name=m, column sep= 0.4cm, commutative diagrams/every cell, ampersand replacement =\&] { \& \& 0\&0 \&
\\ \& H^{2i_0+1}(\Lambda^{i_0},\Lambda^{i_0-1})\cong 0\&H^{2i_0} _{\IS^1} (\Lambda^{i_0},\Lambda^{i_0-1})\& H^{2i_0+2} _{\IS^1}(\Lambda^{i_0},\Lambda^{i_0-1})\cong (\IZ/2\IZ)^2\\
{} \& H^{2i_0+1}(\Lambda^{i_0})\& H^{2i_0} _{\IS^1}(\Lambda^{i_0}) \& H^{2i_0+2} _{\IS^1}(\Lambda^{i_0})\& H^{2i_0+2}(\Lambda^{i_0})\& {}
\\
{} \&H^{2i_0+1}(\Lambda^{i_0-1})\& H^{2i_0} _{\IS^1}(\Lambda^{i_0-1}) \& H^{2i_0+2} _{\IS^1}(\Lambda^{i_0-1})\& 0\& {}\\
\& \&H^{2i_0+1} _{\IS^1}(\Lambda^{i_0},\Lambda^{i_0-1})=0\&H^{2i_0+3} _{\IS^1}(\Lambda^{i_0},\Lambda^{i_0-1})\\
};
\path[commutative diagrams/.cd, every arrow, every label]
(m-3-1) edge (m-3-2)
(m-3-2) edge node {}(m-3-3)
(m-3-3) edge (m-3-4)
(m-3-4) edge (m-3-5)
(m-3-5) edge node {} (m-3-6)

(m-4-1) edge (m-4-2)
(m-4-2) edge (m-4-3)
(m-4-3) edge (m-4-4)
(m-4-4) edge (m-4-5)

(m-1-3) edge (m-2-3)
(m-1-4) edge (m-2-4)

(m-2-2) edge (m-3-2)
(m-2-3) edge (m-3-3)
(m-2-4) edge (m-3-4)

(m-3-2) edge (m-4-2)
(m-3-3) edge (m-4-3)
(m-3-4) edge (m-4-4)
(m-3-5) edge (m-4-5)

(m-4-3) edge (m-5-3)
(m-4-4) edge (m-5-4)
;
\end{tikzpicture}}
\end{equation}
whose rows are the Gysin sequences of the bundles $\Lambda^{i_0}\rightarrow  \Lambda^{i_0} _{\IS^1}$ and $\Lambda^{i_0-1} \rightarrow \Lambda _{\IS^1} ^{i_0-1}$ and whose columns come from the exact sequence of the pair $(\Lambda^{i_0},\Lambda^{i_0-1})$. By the index gap formula, the inclusion \ref{indexgapformulainclusion} induces isomorphisms in cohomology in degrees at most $i_0+2$. Since $i_0\equiv 0 \mod 2$, it follows from (\ref{eq:firstequivalence}) that
\begin{align*}
\dim H^{2i_0+2}( \Lambda^{i_0})>\dim H^{2i_0}(\Lambda^{i_0}).
\end{align*}
The dimensions of the cohomology groups imply that the two connecting maps in the Gysin sequence, $H^{2i_0+1}(\Lambda^{i_0})\rightarrow H^{2i_0} _{\IS^1}(\Lambda^{i_0})$ and $H^{2i_0+2} (\Lambda^{i_0})\rightarrow H^{2i_0+1} _{\IS^1}(\Lambda^{i_0})$, are zero. Together with the isomorphism $H^{2i_0+1}(\Lambda^{i_0})\rightarrow H^{2i_0+1}(\Lambda^{i_0-1})$, this shows that
\begin{align}
H^{2i_0} _{\IS^1}(\Lambda^{i_0-1})\rightarrow H^{2i_0+2} _{\IS^1}(\Lambda^{i_0-1})
\end{align}
is an isomorphism, and hence $H^{2i_0+2} _{\IS^1}(\Lambda^{i_0})\rightarrow H^{2i_0+2} _{\IS^1}(\Lambda^{i_0-1})$ is surjective. It follows that the connecting map $H^{2i_0+2} _{\IS^1}(\Lambda^{i_0-1})\rightarrow H^{2i_0+3} _{\IS^1}(\Lambda^{i_0},\Lambda^{i_0-1})$ is zero. This proves the first claim. \\

It remains to show that the map $H^{2i_0+3} _{\IS^1}(\Lambda^{i_0})\rightarrow H^{2i_0+3} _{\IS^1}(\Lambda^{i_0-1})$ is an isomorphism. For this, consider the diagram
\begin{equation}\label{isotoropybeweisgysindiagram5}
\resizebox{\textwidth}{!}{
\begin{tikzpicture}[commutative diagrams/every diagram]
\matrix[matrix of math nodes, name=m, column sep= 0.4cm, commutative diagrams/every cell, ampersand replacement =\&] {H^{2i_0+1} _{\IS^1}(\Lambda^{i_0},\Lambda^{i_0-1})\& H^{2i_0+3} _{\IS^1}(\Lambda^{i_0},\Lambda^{i_0-1})\&H^{2i_0+3} (\Lambda^{i_0},\Lambda^{i_0-1})\&\\
H^{2i_0+1} _{\IS^1}(\Lambda^{i_0})\& H^{2i_0+3} _{\IS^1}(\Lambda^{i_0})\&H^{2i_0+3}(\Lambda^{i_0})\&H^{2i_0+2} _{\IS^1}(\Lambda^{i_0})\& H^{2i_0+4} _{\IS^1}(\Lambda^{i_0})\& 0\\
H^{2i_0+1} _{\IS^1}(\Lambda^{i_0-1})\& H^{2i_0+3} _{\IS^1}(\Lambda^{i_0-1})\&H^{2i_0+3}(\Lambda^{i_0-1})\cong 0 \&H^{2i_0+2} _{\IS^1}(\Lambda^{i_0-1})\&H^{2i_0+4} _{\IS^1}(\Lambda^{i_0-1})\& 0.
\\};
\path[commutative diagrams/.cd, every arrow, every label]
(m-1-1) edge (m-2-1)
(m-1-2) edge (m-2-2)
(m-1-3) edge (m-2-3)

(m-2-1) edge (m-2-2)
(m-2-2) edge (m-2-3)
(m-2-3) edge (m-2-4)
(m-2-4) edge (m-2-5)
(m-2-5) edge (m-2-6)

(m-3-1) edge (m-3-2)
(m-3-2) edge (m-3-3)
(m-3-3) edge (m-3-4)
(m-3-4) edge (m-3-5)
(m-3-5) edge (m-3-6)

(m-2-1) edge (m-3-1)
(m-2-2) edge (m-3-2)
(m-2-3) edge (m-3-3)
(m-2-4) edge (m-3-4)
(m-2-5) edge (m-3-5);
\end{tikzpicture}}
\end{equation}
whose columns are pair sequences and whose rows, as in the preceding diagrams, are the Gysin sequences of the bundles $\Lambda^{i_0}\rightarrow \Lambda_{\IS^1} ^{i_0}$ and $\Lambda^{i_0-1}\rightarrow \Lambda_{\IS^1} ^{i_0-1}$. We first show that the induced maps $H^{2i_0+3} _{\IS^1}(\Lambda)\rightarrow H^{2i_0+3}(\Lambda^{i_0})$ and $H^{2i_0+3}(\Lambda)\rightarrow H^{2i_0+3} _{\IS^1}(\Lambda^{i_0})$ are isomorphisms and then use the assumption $i_0\equiv 0 \mod 2$. The long exact sequence of the pair $(\Lambda^{i_0},\Lambda^{i_0-1})$ gives
\begin{align*}
H^{2i_0+3}(\Lambda^{i_0})\cong H^{2i_0+3}(\Lambda^{i_0},\Lambda^{i_0-1})\cong \IZ/2\IZ.
\end{align*}
By the index gap formula, the critical manifold $K_{i_0+1}$ has index $\ind(K_{i_0})+4$, so the inclusion $\Lambda^{i_0+1}\rightarrow \Lambda$ induces an isomorphism $H^{2i_0+3}  (\Lambda^{i_0+1})\cong H^{2i_0+3}(\Lambda)\cong \IZ/2\IZ$. Together with $H^{2i_0+3}(\Lambda^{i_0+1},\Lambda^{i_0})=0$, this implies that the map
\begin{align}\label{indexplus3iso}
H^{2i_0+3}(\Lambda^{i_0+1})\rightarrow H^{2i_0+3}(\Lambda^{i_0})
\end{align}
in the pair sequence is an isomorphism. The five lemma, applied to the commutative diagram
\begin{equation}\label{isotoropybeweisgysindiagram6}
\resizebox{\textwidth}{!}{
\begin{tikzpicture}[commutative diagrams/every diagram]
\matrix[matrix of math nodes, name=m, column sep= 1cm, commutative diagrams/every cell, ampersand replacement =\&] {H^{2i_0+2}(\Lambda^{i_0+1})\& H^{2i_0+1} _{\IS^1}(\Lambda^{i_0+1})\& H^{2i_0+3} _{\IS^1}(\Lambda^{i_0+1})\&H^{2i_0+3}(\Lambda^{i_0+1})\& H^{2i_0+2} _{\IS^1}(\Lambda^{i_0+1})\\
H^{2i_0+2}(\Lambda^{i_0})\& H^{2i_0+1} _{\IS^1}(\Lambda^{i_0})\& H^{2i_0+3} _{\IS^1}(\Lambda^{i_0})\&H^{2i_0+3}(\Lambda^{i_0})\& H^{2i_0+2} _{\IS^1}(\Lambda^{i_0})\\
};
\path[commutative diagrams/.cd, every arrow, every label]
(m-1-1) edge (m-1-2)
(m-1-2) edge (m-1-3)
(m-1-3) edge (m-1-4)
(m-1-4) edge (m-1-5)

(m-2-1) edge (m-2-2)
(m-2-2) edge (m-2-3)
(m-2-3) edge (m-2-4)
(m-2-4) edge (m-2-5)

(m-1-1) edge node {$\cong$}(m-2-1)
(m-1-2) edge node {$\cong$}(m-2-2)
(m-1-3) edge (m-2-3)
(m-1-4) edge node {$\cong$}(m-2-4)
(m-1-5) edge node {$\cong$}(m-2-5);
\end{tikzpicture}}
\end{equation}
whose two leftmost and two rightmost vertical maps are isomorphisms, shows that the map
\begin{align}\label{indexplus3eqiso}
H^{2i_0+3} _{\IS^1}(\Lambda^{i_0+1})\cong H^{2i_0+3} _{\IS^1}(\Lambda)\rightarrow H^{2i_0+3} _{\IS^1}(\Lambda^{i_0})
\end{align}
is an isomorphism. Combining the isomorphisms \ref{indexplus3eqiso}, the isomorphism (\ref{indexplus3iso}), and the isomorphisms
$$H^{2i_0+2} (\Lambda)\rightarrow H^{2i_0+2} (\Lambda^{i_0})$$ and 
$$H^{2i_0+1} (\Lambda)\rightarrow H^{2i_0+1} (\Lambda^{i_0})$$ with \ref{eq:secondequivalence}, we deduce from $i_0\equiv 0 \mod 2$ that
\begin{align*}
\dim H^{2i_0+3} _{\IS^1}(\Lambda^{i_0})=\dim H^{2i_0+1} _{\IS^1}(\Lambda^{i_0}).
\end{align*}
Since the connecting map $H^{2i_0+2} (\Lambda^{i_0})\rightarrow H^{2i_0+1} _{\IS^1}(\Lambda^{i_0})$ is zero, this equality implies, by exactness, that the map
\begin{align}
H^{2i_0+1} _{\IS^1}(\Lambda^{i_0})\rightarrow H^{2i_0+3} _{\IS^1}(\Lambda^{i_0})
\end{align}
in the Gysin sequence is an isomorphism. Using this isomorphism and the one in (\ref{isotropyorderbeweisequation1}), we can write diagram (\ref{isotoropybeweisgysindiagram5}) as
\begin{equation}\label{isotoropybeweisgysindiagram7}\resizebox{\textwidth}{!}{
\begin{tikzpicture}[commutative diagrams/every diagram]
\matrix[matrix of math nodes, name=m, column sep= 0.3cm, commutative diagrams/every cell, ampersand replacement =\&] {{} \& H^{2i_0+1} _{\IS^1}(\Lambda^{i_0},\Lambda^{i_0-1})\cong 0 \& H^{2i_0+3} _{\IS^1}(\Lambda^{i_0},\Lambda^{i_0-1})\&H^{2i_0+3} (\Lambda^{i_0},\Lambda^{i_0-1})\&\\
0\& H^{2i_0+1} _{\IS^1}(\Lambda^{i_0})\& H^{2i_0+3} _{\IS^1}(\Lambda^{i_0})\&H^{2i_0+3}(\Lambda^{i_0})\&H^{2i_0+2} _{\IS^1}(\Lambda^{i_0})\&H^{2i_0+4} _{\IS^1}(\Lambda^{i_0})\& 0\\
0 \& H^{2i_0+1} _{\IS^1}(\Lambda^{i_0-1})\& H^{2i_0+3} _{\IS^1}(\Lambda^{i_0-1})\& 0 \& H^{2i_0+2} _{\IS^1}(\Lambda^{i_0-1})\&H^{2i_0+4} _{\IS^1}(\Lambda^{i_0-1})\& 0.
\\};
\path[commutative diagrams/.cd, every arrow, every label]
(m-1-4) edge (m-2-4)
(m-1-2) edge (m-2-2)
(m-1-3) edge (m-2-3)

(m-2-1) edge (m-2-2)
(m-2-2) edge node{$\cong$} (m-2-3)
(m-2-3) edge node {$0$}(m-2-4)
(m-2-4) edge (m-2-5)
(m-2-5) edge (m-2-6)
(m-2-6) edge (m-2-7)

(m-3-1) edge (m-3-2)
(m-3-2) edge (m-3-3)
(m-3-3) edge (m-3-4)
(m-3-4) edge (m-3-5)
(m-3-5) edge (m-3-6)
(m-3-6) edge (m-3-7)

(m-2-2) edge node {$\cong$}(m-3-2)
(m-2-3) edge (m-3-3)
(m-2-4) edge (m-3-4)
(m-2-5) edge (m-3-5)
(m-2-6) edge (m-3-6);
\end{tikzpicture}}
\end{equation}
Commutativity of the diagram and exactness of its rows and columns imply that
\begin{align}
H^{2i_0+3} _{\IS^1}(\Lambda^{i_0}) \rightarrow H^{2i_0+3} _{\IS^1}(\Lambda^{i_0-1})
\end{align}
is an isomorphism. Together with exactness of the columns in (\ref{isotoropybeweisgysindiagram7}), this implies that
$$ H^{2i_0+3} _{\IS^1}(\Lambda^{i_0},\Lambda^{i_0-1})\cong H^3(K_{i_0})\cong 0,$$
which, by Lemma \ref{lemregularleveltwotorsion}, contradicts the assumption $i_0\equiv 0 \mod 2$.
\end{proof}

\begin{theorem}\label{coroddisoorder}
We have $q_i\equiv 1 \mod 2$ for $i=1,...,k$.
\end{theorem}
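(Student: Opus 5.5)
The plan is to read off the parity of each $q_i$ from Proposition \ref{propoddisotropyorder} via the counting identity (\ref{connectionnumberisotropygroupsparity}). By rational perfectness and index monotonicity, the critical manifolds $K_1,\dots,K_{i_0-1}$ below the first regular level have indices $2,4,\dots,2(i_0-1)$. Hence $\ind(K_{i_0})=2i_0$, since the next index is $2i_0$ and there is at most one critical manifold of each index. Proposition \ref{propoddisotropyorder} gives $2i_0\equiv 2\mod 4$, so $i_0$ is odd and $i_0-1$ is even.

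Next we turn to the count. By (\ref{connectionnumberisotropygroupsparity}), $i_0-1=\sum_{i=1}^k(q_i-1)$. This uses that the singular critical manifolds below $K_{i_0}$ are exactly the iterates $\tilde K_i^l$ with $1\le l<q_i$, and that the $q_i$-th iterate of $\tilde K_i$ lands in the regular level. It also uses Corollary \ref{corconnectedcritsets}: the critical sets are connected, and distinct pairs $(i,l)$ give distinct energy levels. One must check that every $\tilde K_i^{q_i}$ is contained in $K_{i_0}$, i.e. that it has length $L$. This holds because a prime geodesic in $C_i$ has length $L/q_i$, so its $q_i$-fold iterate has length exactly $L$, the minimal regular length. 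It follows that $\sum_i(q_i-1)$ is even.

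Finally we conclude with Proposition \ref{propatmostoneevenisoorder}, which says at most one $q_j$ is even. If some $q_j$ were even, then $q_j-1$ would be odd while all other $q_i-1$ are even. The sum would then be odd, a contradiction. Hence all $q_i$ are odd.

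The main obstacle is only bookkeeping: we must make sure the identity (\ref{connectionnumberisotropygroupsparity}) and the relation $\ind(K_{i_0})=2i_0$ are consistent. In particular, no singular critical manifold may lie at an index above $2i_0$ but at an energy below $L^2$; this is ruled out by index monotonicity (Proposition \ref{propindexmonotonicity}). Everything else is a direct parity argument.
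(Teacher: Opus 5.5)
Your proposal is correct and follows the paper's proof. Proposition~\ref{propoddisotropyorder} gives $2i_0\equiv 2 \bmod 4$, so $\sum_{i}(q_i-1)=i_0-1$ is even, and Proposition~\ref{propatmostoneevenisoorder} then rules out an even $q_j$ by parity. Your extra bookkeeping (counting $\tilde K_i^l$ only for $1\le l<q_i$, and checking that $\tilde K_i^{q_i}$ has length $L$ and hence lies in $K_{i_0}$) spells out what the paper's identity~\eqref{connectionnumberisotropygroupsparity} takes for granted.
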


\begin{proof}
Suppose that some $j\in\{1,...,k\}$ satisfies
\begin{align}\label{corproofisotropycongruence1}
q_j\equiv 0 \mod 2.
\end{align} 
By Proposition \ref{propoddisotropyorder}, we have $2i_0=\ind(K_{i_0})\equiv 2 \mod 4$, which gives
\begin{align*}
\sum_{i=1} ^k (q_i-1)
&= \#\{K\subset \Lambda M,~ K\textup{ is critical manifold of}~ E~\textup{ with}~ \ind (K)<\ind(K_{i_0}) \}\\
&= i_0-1\equiv 0 \mod 2.
\end{align*}
Together with (\ref{corproofisotropycongruence1}), this implies that there is $j_1\in \{1,...,k\}$ with $j\neq j_1$ and $q_{j_1}\equiv 0\mod 2$. This contradicts Proposition \ref{propatmostoneevenisoorder} and completes the proof.
\end{proof}




\section{The case that the geodesic flow has exactly one singular orbit type}

We now exclude the case that the isotropy set of the geodesic flow has exactly one connected component. First we compute the low-degree groups $H_{\IS^1}^{*}(\Lambda M,M;\IZ)$ using the round metric on $\IS^3$; these groups are torsion-free below degree $10$. We then repeat the computation for a metric with nonempty connected isotropy set and obtain $2$-torsion already in degree $6$, giving a contradiction.

\begin{lemma}\label{lemequcohomgroups}
For $*\leq 10$ the integral $\IS^1$-equivariant cohomology groups are
\begin{align*}
H^{*}_{\IS^1}(\Lambda M,M;\IZ)\cong
\begin{cases}
\IZ,&*=2,\\
\IZ^2,&*=4,6,8,\\
\IZ^2\oplus \IZ/4\IZ,&*=10,\\
0,&\textup{elsewhere}.
\end{cases}
\end{align*}
In particular, the first torsion occurs in degree $10$, and all odd-degree groups vanish.
\end{lemma}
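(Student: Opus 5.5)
The plan is to use that the $\IS^1$-equivariant homotopy type of the pair $(\Lambda M,M)$ depends only on the diffeomorphism type of $M\cong\IS^3$. So I compute with the round metric, where every geodesic is prime of length $2\pi$ and the geodesic flow is free. The critical manifolds of positive energy are then exactly $K^q$ for $q\geq1$, where $K\cong T^1\IS^3$ is the critical manifold of lowest positive energy. By the classical index formula for round spheres, $\ind(K^q)=(2q-1)(n-1)=2(2q-1)$, i.e.\ $2,6,10,14,\dots$. By \eqref{sublevelfinalthomisoequiv} and Corollary \ref{totalcohomiso} (applied relative to $M$), only $K^1,K^2,K^3$ contribute in degrees $\leq10$. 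Their contributions are
\[
H^{*-2}_{\IS^1}(K),\qquad H^{*-6}_{\IS^1}(K^2),\qquad H^{*-10}_{\IS^1}(K^3).
\]

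\textbf{Step 1: the equivariant cohomology of $K^1,K^2,K^3$ in the needed degrees.} I use the Gysin sequence with $e(\pi)=x_1+x_2-qc$ from Proposition \ref{propcomputationeulerclass} and the explicit matrices listed before Theorem \ref{thmcohomologycritmanifolds}. I take care in degree $4$, where the coker of the $k=2$ matrix has rank $1$, not $2$. For $q=1$ the action is free, so $H^*_{\IS^1}(K)\cong H^*(\IS^2\times\IS^2)$, which is $\IZ,\IZ^2,\IZ$ in degrees $0,2,4$ and zero above. For $q=2$ one gets $\IZ$ in degree $0$, $\IZ^2$ in degree $2$, and $\IZ\oplus\IZ/4\IZ$ in degree $4$. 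For $q=3$ only $H^0=\IZ$ is needed. Summing the shifted groups, the relative pieces are: $\IZ$ in degree $2$; $\IZ^2$ in degree $4$; $\IZ\oplus\IZ$ in degree $6$; $\IZ^2$ in degree $8$; and $(\IZ\oplus\IZ/4\IZ)\oplus\IZ$ in degree $10$. By Proposition \ref{propprilimcritcohomcomp}, all odd-degree pieces vanish.

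\textbf{Step 2: assemble via the pair/triple sequences of $(\Lambda^j,\Lambda^{j-1},M)$.} Since every relative group $H^*_{\IS^1}(\Lambda^j,\Lambda^{j-1})$ sits in even degree, all connecting maps (even $\to$ odd) vanish. This immediately gives vanishing of odd degrees and the correct ranks. In degrees $2,4,6,8$ the result is forced, because free abelian groups only have trivial extensions.

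\textbf{Step 3: the main obstacle, the extension in degree $10$.} Here one has a short exact sequence
\[
0\to H^{10}_{\IS^1}(\Lambda^3,\Lambda^2)\cong\IZ\to H^{10}_{\IS^1}(\Lambda^3,M)\to H^{10}_{\IS^1}(\Lambda^2,M)\cong\IZ\oplus\IZ/4\IZ\to0,
\]
and a nontrivial extension could absorb or shrink the $\IZ/4\IZ$. To rule this out, I will redo the count with $\IZ/2\IZ$ coefficients. The split answer $\IZ^2\oplus\IZ/4\IZ$, with $H^{11}=0$, gives $\dim H^{10}_{\IS^1}(\Lambda M,M;\IZ/2\IZ)=3$. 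Any nontrivial extension, e.g.\ $\IZ^2$ or $\IZ^2\oplus\IZ/2\IZ$, gives a strictly smaller dimension, or forces nonzero odd $\IZ/2$-groups, which the mod $2$ Morse count excludes. I then compare with the Böksted--Ottosen Poincaré series for $H^*_{\IS^1}(\Lambda\IS^3;\IZ/2\IZ)$, quoted before Proposition \ref{propoddisotropyorder}. I subtract the contribution of $M$, namely $H^*(B\IS^1\times\IS^3;\IZ/2\IZ)$, using the split long exact sequence of $(\Lambda M,M)$; this splits because $M$ is a retract via $p$. The check is that the resulting dimension in degree $10$ is $3$. If this bookkeeping is delicate, a fallback is to compare the $\IZ/4$-torsion directly. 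Here I would use the cup product with $e$ (the Euler class of the $\IS^1$-bundle) between degrees $8$ and $10$, together with the Gysin sequence of $\Lambda^3$, to show that the torsion class from $K^2$ survives in the limit.
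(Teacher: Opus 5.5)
Your Steps 1 and 2 are sound. You are also right that the degree-$4$ cokernel has rank one, so $H^4_{\IS^1}(K^2;\IZ)\cong\IZ\oplus\IZ/4\IZ$ and not $\IZ^2\oplus\IZ/4\IZ$ as Theorem \ref{thmcohomologycritmanifolds} states; the paper's proof also uses $\IZ\oplus\IZ/4\IZ$.

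The gap is in Step 3: a mod $2$ count cannot decide the extension $0\to\IZ\to H^{10}_{\IS^1}(\Lambda^3,M;\IZ)\to\IZ\oplus\IZ/4\IZ\to0$. Since $\mathrm{Ext}^1(\IZ\oplus\IZ/4\IZ,\IZ)\cong\IZ/4\IZ$, the possible middle groups are $\IZ^2\oplus\IZ/4\IZ$, $\IZ^2$, and, for extension class $2$, $\IZ^2\oplus\IZ/2\IZ$. Because $H^{11}_{\IS^1}(\Lambda M,M;\IZ)=0$, the first and third both give $\dim H^{10}_{\IS^1}(\Lambda M,M;\IZ/2\IZ)=3$ and $\dim H^{9}_{\IS^1}(\Lambda M,M;\IZ/2\IZ)=1$. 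So no $\IZ/2\IZ$ Poincar\'e series can separate them, and your claim that every nontrivial extension lowers the dimension is false.

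Your alternative claim, that a nontrivial extension would force odd $\IZ/2\IZ$-classes excluded by the mod $2$ Morse count, is also wrong. The mod $2$ Morse count does contain odd classes, e.g.\ $H^9_{\IS^1}(\Lambda^2,\Lambda^1;\IZ/2\IZ)\cong H^3_{\IS^1}(K^2;\IZ/2\IZ)\cong\mathrm{Tor}(\IZ/4\IZ,\IZ/2\IZ)\neq0$.

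Finally, subtracting $H^*(B\IS^1\times\IS^3;\IZ/2\IZ)$ is unjustified. The map $p(\gamma)=\gamma(0)$ is not $\IS^1$-equivariant, and the equivariant sequence of $(\Lambda M,M)$ genuinely does not split. The quoted series gives $\dim H^3_{\IS^1}(\Lambda\IS^3;\IZ/2\IZ)=0$, whereas splitting would give $1$, since $H^3_{\IS^1}(\Lambda M,M;\IZ/2\IZ)=0$ and $H^3_{\IS^1}(M;\IZ/2\IZ)\cong\IZ/2\IZ$.

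Your fallback is the right idea and is essentially what the paper does, but it needs inputs you do not supply.
\begin{itemize}
\item All odd-degree equivariant groups vanish in this range. So the Gysin sequence of $\Lambda^j\times E\IS^1\to\Lambda^j\times_{\IS^1}E\IS^1$, relative to $M$, becomes $0\to H^9(\Lambda^j,M)\xrightarrow{\delta_j}H^8_{\IS^1}(\Lambda^j,M)\xrightarrow{\cup e}H^{10}_{\IS^1}(\Lambda^j,M)\to H^{10}(\Lambda^j,M)\to0$.
\item Non-equivariant Morse theory uses $K^q\cong\IS^3\times\IS^2$ with index $4q-2$, together with $H^9(\Lambda M,M;\IZ)\cong H^{10}(\Lambda M,M;\IZ)\cong\IZ$. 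It gives $H^{10}(\Lambda^2,M;\IZ)=0$ and $H^{10}(\Lambda^3,M;\IZ)\cong\IZ$, and shows that restriction $H^9(\Lambda^3,M)\to H^9(\Lambda^2,M)$ is an isomorphism.
\item Restriction on $H^8_{\IS^1}$ is an isomorphism, since $(\Lambda^3,\Lambda^2)$ contributes only from degree $10$ on.
\end{itemize}
Hence $\coker\delta_3\cong\coker\delta_2\cong H^{10}_{\IS^1}(\Lambda^2,M;\IZ)\cong\IZ\oplus\IZ/4\IZ$. This group embeds in $H^{10}_{\IS^1}(\Lambda^3,M;\IZ)$ with free quotient $H^{10}(\Lambda^3,M;\IZ)\cong\IZ$, so that extension splits. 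The point is that the Gysin sequence replaces your extension, whose quotient has torsion, by one whose quotient is free.
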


\begin{proof}
We use Morse theory for the round metric. Then $K_1\cong T^1M$ consists of the prime geodesics, $K_i=K_1^i$ for $i>1$, and
\[
\ind(K_i)=2+4(i-1).
\]
By Theorem \ref{thmcohomologycritmanifolds}, the groups needed below are
\begin{align*}
H^{*}_{\IS^1}(K_1;\IZ)&\cong
\begin{cases}
\IZ,&*=0,\\
\IZ^2,&*=2,\\
\IZ,&*=4,\\
0,&\textup{elsewhere},
\end{cases}\\[1mm]
H^{*}_{\IS^1}(K_2;\IZ)&\cong
\begin{cases}
\IZ,&*=0,\\
\IZ^2,&*=2,\\
\IZ\oplus\IZ/4\IZ,&*=4,\\
\IZ/4\IZ\oplus\IZ/4\IZ,&*>4,\ *\equiv0\mod 2,\\
0,&\textup{elsewhere}.
\end{cases}
\end{align*}
Hence the equivariant Thom isomorphism (\ref{subsevelthomisoequivariant}) gives
\begin{align}
H^{*}_{\IS^1}(\Lambda^1,\Lambda^0;\IZ)&\cong
\begin{cases}
\IZ,&*=2,\\
\IZ^2,&*=4,\\
\IZ,&*=6,\\
0,&\textup{elsewhere},
\end{cases}
\label{proofrelcohfirst}\\
H^{*}_{\IS^1}(\Lambda^2,\Lambda^1;\IZ)&\cong
\begin{cases}
\IZ,&*=6,\\
\IZ^2,&*=8,\\
\IZ\oplus\IZ/4\IZ,&*=10,\\
\IZ/4\IZ\oplus\IZ/4\IZ,&*>10,\ *\equiv0\mod 2,\\
0,&\textup{elsewhere}.
\end{cases}
\label{proofrelcohsecond}
\end{align}
The long exact sequences of $(\Lambda^1,M)$ and $(\Lambda^2,\Lambda^1,M)$ therefore give
\begin{align*}
H^{2}_{\IS^1}(\Lambda^1,M;\IZ)&\cong\IZ,\\
H^{4}_{\IS^1}(\Lambda^1,M;\IZ)&\cong\IZ^2,\\
H^{6}_{\IS^1}(\Lambda^2,M;\IZ)&\cong\IZ^2,\\
H^{8}_{\IS^1}(\Lambda^2,M;\IZ)&\cong\IZ^2,\\
H^{10}_{\IS^1}(\Lambda^2,M;\IZ)&\cong\IZ\oplus\IZ/4\IZ.
\end{align*}
For $*\leq\ind(K_{i+1})-2$, the inclusion $\Lambda^i\hookrightarrow\Lambda M$ induces an isomorphism
\[
H^{*}_{\IS^1}(\Lambda M,M;\IZ)\longrightarrow H^{*}_{\IS^1}(\Lambda^i,M;\IZ).
\]
For the round sphere, Corollary 3.2 of \cite{radeschi2017berger} gives
\[
H^{2*+1}_{\IS^1}(\Lambda M,M;\IZ)=0.
\]
Thus only degree $10$ remains. 

For computing the ordinary cohomology, observe that it holds $K_i\cong T^1M\cong\IS^3\times\IS^2$. The Thom isomorphism gives
\begin{align*}
H^{*}(\Lambda^3,\Lambda^2;\IZ)&\cong
\begin{cases}\IZ,&*=10,12,13,15,\\0,&\textup{elsewhere},\end{cases}\\
H^{*}(\Lambda^1,\Lambda^0;\IZ)&\cong
\begin{cases}\IZ,&*=2,4,5,7,\\0,&\textup{elsewhere},\end{cases}\\
H^{*}(\Lambda^2,\Lambda^1;\IZ)&\cong
\begin{cases}\IZ,&*=6,8,9,11,\\0,&\textup{elsewhere}.
\end{cases}
\end{align*}
From the triple $(\Lambda^2,\Lambda^1,M)$ we obtain, in particular,
\[
H^9(\Lambda^2,M;\IZ)\cong\IZ,
\qquad
H^{10}(\Lambda^2,M;\IZ)=0.
\]
The degree-$9$ part of the exact sequence of $(\Lambda^3,\Lambda^2,M)$ is
\begin{center}
\begin{tikzpicture}[commutative diagrams/every diagram]
\matrix[matrix of math nodes, name=m, column sep=0.8cm, commutative diagrams/every cell, ampersand replacement=\&] {
0 \& H^9(\Lambda^3,M;\IZ) \& H^9(\Lambda^2,M;\IZ)\cong\IZ \& H^{10}(\Lambda^3,\Lambda^2;\IZ)\cong\IZ \\
};
\path[commutative diagrams/.cd, every arrow, every label]
(m-1-1) edge (m-1-2)
(m-1-2) edge (m-1-3)
(m-1-3) edge (m-1-4);
\end{tikzpicture}
\end{center}
As in the original argument, Proposition \ref{totalcohomiso} gives
$H^9(\Lambda^3,M;\IZ)\cong H^9(\Lambda M,M;\IZ)\cong\IZ$; hence the inclusion induces an isomorphism
\begin{equation}\label{inclusiondeg9iso}
H^9(\Lambda^3,M;\IZ)\xrightarrow{\cong}H^9(\Lambda^2,M;\IZ).
\end{equation}
For $i=2,3$ write $\delta_i$ for the connecting map of the relative Gysin sequence of the homotopy quotient of the sublevel sets. Combining the relative Gysin sequence of the sublevel sets and the triple sequence gives the commutative diagram
\begin{center}
\begin{tikzpicture}[commutative diagrams/every diagram]
\matrix[matrix of math nodes, name=m, column sep=0.9cm, row sep=0.7cm,
commutative diagrams/every cell, ampersand replacement=\&] {
0 \& H^9(\Lambda^3,M;\IZ) \& H^8_{\IS^1}(\Lambda^3,M;\IZ) \& H^{10}_{\IS^1}(\Lambda^3,M;\IZ) \& H^{10}(\Lambda^3,M;\IZ) \& 0 \\
0 \& H^9(\Lambda^2,M;\IZ) \& H^8_{\IS^1}(\Lambda^2,M;\IZ) \& H^{10}_{\IS^1}(\Lambda^2,M;\IZ) \& H^{10}(\Lambda^2,M;\IZ) \& 0 \\
};
\path[commutative diagrams/.cd, every arrow, every label]
(m-1-1) edge (m-1-2)
(m-1-2) edge node {$\delta_3$} (m-1-3)
(m-1-3) edge node {$\cup e(\Lambda^3)$} (m-1-4)
(m-1-4) edge node {} (m-1-5)
(m-1-5) edge (m-1-6)

(m-2-1) edge (m-2-2)
(m-2-2) edge node {$\delta_2$} (m-2-3)
(m-2-3) edge node {$\cup e(\Lambda^2)$} (m-2-4)
(m-2-4) edge node {} (m-2-5)
(m-2-5) edge (m-2-6)

(m-1-2) edge (m-2-2)
(m-1-3) edge (m-2-3)
(m-1-4) edge (m-2-4)
(m-1-5) edge (m-2-5);
\end{tikzpicture}
\end{center}
Here $e(\Lambda^i)$ denotes the Euler class of
$\Lambda^i\times E\IS^1\to\Lambda^i\times_{\IS^1}E\IS^1$ for $i=2,3$. Using \eqref{inclusiondeg9iso} and the exact sequences of the triple, the vertical map in equivariant degree $8$ is also an isomorphism. Hence
\[
\coker(\delta_3)\cong\coker(\delta_2).
\]
Since $H^{10}(\Lambda^2,M;\IZ)=0$, exactness of the lower row gives
\[
\coker(\delta_2)\cong H^{10}_{\IS^1}(\Lambda^2,M;\IZ)
\cong\IZ\oplus\IZ/4\IZ.
\]
Finally $H^{10}(\Lambda^3,M;\IZ)\cong\IZ$ is free, so the exact sequence
\[
0\longrightarrow \coker(\delta_3)
\longrightarrow H^{10}_{\IS^1}(\Lambda^3,M;\IZ)
\longrightarrow H^{10}(\Lambda^3,M;\IZ)
\longrightarrow0
\]
splits. Therefore
\[
H^{10}_{\IS^1}(\Lambda^3,M;\IZ)
\cong(\IZ\oplus\IZ/4\IZ)\oplus\IZ,
\]
which proves the lemma.
\end{proof}

\begin{lemma}\label{lemmadegfundcorrespondence}
Recall that $i_0\in\IN$ was minimal with the property that $K_{i_0}$ is a regular critical manifold. Assume that $i_0>2$ and that the first two critical manifolds are singular. Then
\[
\#\pi_1(K_2)\mid\deg p|_{K_1},
\qquad
\deg p|_{K_2}\mid\#\pi_1(K_1).
\]
In particular, if $K_2=K_1^2$, then
\[
\deg(K_1)=\pm\deg p|_{K_2}
=\pm\#\pi_1(K_1)=\pm\#\pi_1(K_2),
\]
and in this case $H^6(\Lambda^2;\IZ)=0$.
\end{lemma}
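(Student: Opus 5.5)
The plan is to compare the Morse--Bott sequences of the first two sublevel sets with the known integral cohomology of $\Lambda M$, using cup product with $p^*([M])$ as the bridge. By Proposition \ref{propmappingdegreecorespondence}, on a level $(\Lambda^i,\Lambda^{i-1})$ this cup product becomes multiplication by $\deg p|_{K_i}$, viewed as a map $\IZ\to\IZ$ between the bottom class and the top class of the Thom-shifted lens space. Write $\alpha_i=\#\pi_1(K_i)$ and $d_i=\deg p|_{K_i}$; each $d_i$ is nonzero by Corollary \ref{nonzerodegree}. Since $K_1,K_2$ are singular, $\ind(K_1)=2$ and $\ind(K_2)=4$, and (\ref{eqrelativehomology2}) gives:
\begin{itemize}
\item $H^*(\Lambda^1,\Lambda^0;\IZ)$ is $\IZ$ in degrees $2,5$ and $\IZ/\alpha_1$ in degree $4$;
\item $H^*(\Lambda^2,\Lambda^1;\IZ)$ is $\IZ$ in degrees $4,7$ and $\IZ/\alpha_2$ in degree $6$.
\end{itemize}

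\emph{Step 1 (low degrees).} Corollary \ref{propbasicisos} identifies $H^*(\Lambda^1;\IZ)$ and $H^*(\Lambda^2;\IZ)$ with $H^*(\Lambda M;\IZ)$ in degrees $\le 3$ and $\le 5$. Because $p$ restricted to $M=\Lambda^0$ is the identity, the connecting map $H^3(M)\to H^4(\Lambda^1,M)$ vanishes. Hence $H^4(\Lambda^1;\IZ)\cong\IZ/\alpha_1$ and $H^5(\Lambda^1;\IZ)\cong\IZ$. The pair sequence of $(\Lambda^2,\Lambda^1)$ in degree $4$ then reads $0\to\IZ\to H^4(\Lambda^2)\cong\IZ\to\IZ/\alpha_1\to0$, so $H^4(\Lambda^2,\Lambda^1)\to H^4(\Lambda^2)$ is multiplication by $\pm\alpha_1$. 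In degree $5$ it gives the exact sequence
\[
H^5(\Lambda^2)\cong\IZ\longrightarrow H^5(\Lambda^1)\cong\IZ\xrightarrow{\ \delta\ }\IZ/\alpha_2\longrightarrow H^6(\Lambda^2)\longrightarrow 0 .
\]

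\emph{Step 2 (the two divisibilities).} For the first relation I use the square formed by cup product with $p^*([M])$ from degree $2$ to degree $5$. On $\Lambda M$, and hence on $\Lambda^2$, this cup product is an isomorphism by the integral K\"unneth decomposition (\ref{equationfundcupfreeloopspace}). On $\Lambda^1$ it is multiplication by $d_1$ via Proposition \ref{propmappingdegreecorespondence}. Therefore the restriction $H^5(\Lambda^2)\to H^5(\Lambda^1)$ has image $d_1\IZ$, and $\delta$ embeds $\IZ/d_1$ into $\IZ/\alpha_2$. This ties $d_1$ to $\alpha_2$, with the discrepancy measured by $H^6(\Lambda^2;\IZ)$. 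To upgrade this to the stated relation $\alpha_2\mid d_1$, I need $\delta$ to be onto, i.e.\ $H^6(\Lambda^2;\IZ)=0$. I would try to obtain this from the sequence of $(\Lambda^3,\Lambda^2)$ together with $H^6(\Lambda M;\IZ)\cong\IZ$ and $\ind(K_3)=6$, showing that the finite cyclic group $H^6(\Lambda^2)$ receives a surjection from $H^6(\Lambda^3)$ that factors through a torsion-free group.

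For the second relation I use the square from degree $4$ to degree $7$. Here $H^7(\Lambda^2)\cong H^7(\Lambda^2,\Lambda^1)\cong\IZ$, and the relative cup product is multiplication by $d_2$. Since the generator of $H^4(\Lambda^2,\Lambda^1)$ maps to $\alpha_1$ times a generator of $H^4(\Lambda^2)$, commutativity gives $\alpha_1\cdot m=\pm d_2$, where $m$ is the coefficient of $p^*([M])\cup h_4$ in $H^7(\Lambda^2)$. The stated divisibility $d_2\mid\alpha_1$ then amounts to showing $m=\pm1$ times the reciprocal factor, i.e.\ computing $m$. I would compute $m$ from the sequence of $(\Lambda^3,\Lambda^2)$ in degrees $7,8$ and the K\"unneth isomorphism $H^4(\Lambda M)\to H^7(\Lambda M)$.

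\emph{Step 3 (iterate case).} If $K_2=K_1^2$, then $\Phi(K_2)=\Phi(K_1)$ and $p|_{K_2}$ factors through the same lens space, so $\alpha_1=\alpha_2$ and $d_1=\pm d_2$. The two divisibilities then collapse to $d_1=\pm\alpha_1=\pm\alpha_2$. Consequently $\delta$ is an isomorphism, and the exact sequence in Step 1 yields $H^6(\Lambda^2;\IZ)=0$.

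The main obstacle is the \emph{direction} of the divisibilities in Step 2. The bare exact sequences naturally give $d_1\mid\alpha_2$ and $\alpha_1\mid d_2$. Obtaining the stated directions requires the additional surjectivity of $\delta$ and the computation of the coefficient $m$. That is where the information coming from $\Lambda^3$ and $H^*(\Lambda M;\IZ)$ must be fed in, and where I would concentrate effort first.
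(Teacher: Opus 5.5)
Your Steps 1--3 are the paper's proof essentially line by line. You use the same two naturality squares for $\cup p^*([M])$ (degrees $2\to5$ and $4\to7$), Proposition~\ref{propmappingdegreecorespondence} on the relative groups, the K\"unneth isomorphism on $\Lambda^2$ in degrees $\le5$, and the pair sequences of $(\Lambda^1,\Lambda^0)$ and $(\Lambda^2,\Lambda^1)$. Your diagnosis of the obstacle is also correct, and the paper does not resolve it. What its proof actually derives is $\deg(p|_{K_1})\mid\#\pi_1(K_2)$ and $\#\pi_1(K_1)\mid\deg(p|_{K_2})$, which are exactly your relations $d_1\mid\alpha_2$ and $\alpha_1\mid d_2$. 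The divisibilities displayed in the lemma are reversed relative to its own proof, and no input from $\Lambda^3$ is used there.

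So drop the upgrades; you do not need them. In Step~3, $\alpha_1=\alpha_2$ and $d_1=\pm d_2$ combined with the relations you \emph{did} prove give $d_1\mid\alpha_1\mid d_2=\pm d_1$, hence $d_1=\pm\alpha_1=\pm\alpha_2$. Then $\delta$ is onto (not an isomorphism, since its source is $\IZ$), so $H^6(\Lambda^2;\IZ)=0$. This is precisely how the paper concludes. The iterate case is also the only part of the lemma used later, in the proof of Theorem~\ref{thmoneorbittype}. Phrase Step~3 in terms of the proven relations, not the displayed ones.

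The upgrades would also not go through as sketched. When $K_3$ is singular, the sequence of $(\Lambda^3,\Lambda^2)$ reads
\[
0\to H^6(\Lambda^3,\Lambda^2;\IZ)\cong\IZ\to H^6(\Lambda^3;\IZ)\cong\IZ\to H^6(\Lambda^2;\IZ)\to 0 .
\]
So $H^6(\Lambda^2;\IZ)$ is a cyclic quotient of $\IZ$ of undetermined order. Being hit surjectively from a torsion-free group imposes no constraint on it.

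Likewise, your coefficient $m$ equals, up to sign, the degree of the restriction $H^7(\Lambda^3;\IZ)\to H^7(\Lambda^2;\IZ)$, and exactness only shows that this divides $\#\pi_1(K_3)$. The displayed directions amount to $d_1=\pm\alpha_2$ and $d_2=\pm\alpha_1$. These do hold when $i_0=3$: Lemma~\ref{lastevensingularlevelzero} gives $H^6(\Lambda^2;\IZ)=0$, and the same exact-sequence argument in degree $7$ with $K_3\cong T^1M$ gives $m=\pm1$. For $i_0>3$, neither your argument nor the paper's establishes them.
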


\begin{proof}
The proof is the same diagram chase as in the original version, using index monotonicity and the fact that there is at most one critical manifold at each critical energy. Let $m\in M$. As before,
\[
H^{*}(\Lambda M;\IZ)
\cong H^{*}(\Omega_mM;\IZ)\otimes_{\IZ}H^{*}(M;\IZ)
\cong H^{*}(\Omega\IS^3;\IZ)\otimes_{\IZ}H^{*}(\IS^3;\IZ),
\]
where $\Omega_m$ denotes the loop space of $M$ over the point $m\in M$.
Thus, for a generator $[M]\in H^3(M;\IZ)$,
\[
\cdot\cup p^*([M]):H^{2*}(\Lambda M;\IZ)\xrightarrow{\cong}H^{2*+3}(\Lambda M;\IZ).
\]
Since $\Lambda^0\cong M$ and Proposition \ref{propbasicisos} identifies the relevant degree-$3$ groups, the long exact sequence of $(\Lambda^1,\Lambda^0)$ gives that
$H^3(\Lambda^1;\IZ)\to H^3(\Lambda^0;\IZ)$ is an isomorphism and hence
\[
H^4(\Lambda^1;\IZ)\cong\IZ/\#\pi_1(K_1)\IZ.
\]

Proposition \ref{propmappingdegreecorespondence} identifies cupping product with the class $p^{*}M$ with multiplication by the mapping degree. The corresponding naturality square is
\begin{center}
\begin{tikzpicture}[commutative diagrams/every diagram]
\matrix[matrix of math nodes, name=m, column sep=1.1cm, row sep=0.7cm,
commutative diagrams/every cell, ampersand replacement=\&] {
H^2(\Lambda^1,\Lambda^0;\IZ)\cong\IZ \& H^2(\Lambda^1;\IZ)\cong\IZ \\
H^5(\Lambda^1,\Lambda^0;\IZ)\cong\IZ \& H^5(\Lambda^1;\IZ)\cong\IZ \\
};
\path[commutative diagrams/.cd, every arrow, every label]
(m-1-1) edge (m-1-2)
(m-2-1) edge (m-2-2)
(m-1-1) edge[right] node {$\cdot\deg(p|_{K_1})$} (m-2-1)
(m-1-2) edge[right] node {$\cup p^*([M])$} (m-2-2);
\end{tikzpicture}
\end{center}
It commutes up to sign; since the horizontal maps are isomorphisms, cup product on $H^2(\Lambda^1;\IZ)$ is multiplication by $\deg(p|_{K_1})$, up to sign. Now consider the commutative diagram of the long exact sequences for the pair $(\Lambda^2,\Lambda^1)$:
\begin{center}
\begin{tikzpicture}[commutative diagrams/every diagram]
\matrix[matrix of math nodes, name=m, column sep=0.8cm, row sep=0.65cm,
commutative diagrams/every cell, ampersand replacement=\&] {
0 \& H^5(\Lambda^2;\IZ)\cong\IZ \& H^5(\Lambda^1;\IZ)\cong\IZ \& H^6(\Lambda^2,\Lambda^1;\IZ)\cong\IZ/\#\pi_1(K_2)\IZ \\
0 \& H^2(\Lambda^2;\IZ) \& H^2(\Lambda^1;\IZ)\cong\IZ \& 0 \\
};
\path[commutative diagrams/.cd, every arrow, every label]
(m-1-1) edge (m-1-2)
(m-1-2) edge node {$j_1^5$} (m-1-3)
(m-1-3) edge (m-1-4)
(m-2-1) edge (m-2-2)
(m-2-2) edge node {$j_1^2$} (m-2-3)
(m-2-3) edge (m-2-4)
(m-2-2) edge[right] node {$\cup p^*([M])$} (m-1-2)
(m-2-3) edge[right] node {$\cdot\deg(p|_{K_1})$} (m-1-3);
\end{tikzpicture}
\end{center}
Hence $j_1^5$ is multiplication by $\deg(p|_{K_1})$, and the original diagram chase gives
\[
\deg(p|_{K_1})\mid\#\pi_1(K_2).
\]

Likewise, Proposition \ref{propmappingdegreecorespondence} identifies
$H^4(\Lambda^2,\Lambda^1;\IZ)\to H^7(\Lambda^2,\Lambda^1;\IZ)$
with multiplication by $\deg(p|_{K_2})$. Comparing this with the exact fragment
\[
0\longrightarrow H^4(\Lambda^2,\Lambda^1;\IZ)\cong\IZ
\xrightarrow{j_2^4}H^4(\Lambda^2;\IZ)\cong\IZ
\longrightarrow H^4(\Lambda^1;\IZ)\cong\IZ/\#\pi_1(K_1)\IZ
\longrightarrow0
\]
shows, exactly as in the original diagram, that $j_2^4$ is multiplication by $\deg(p|_{K_2})$ and hence
\[
\#\pi_1(K_1)\mid\deg(p|_{K_2}).
\]

If $K_2=K_1^2$, then $K_1$ and $K_2$ are diffeomorphic, and these relations imply
\[
\deg(K_1)=\pm\deg p|_{K_2}
=\pm\#\pi_1(K_1)=\pm\#\pi_1(K_2).
\]
The relevant part of the exact sequence is
\[
0\longrightarrow H^5(\Lambda^2;\IZ)\cong\IZ
\xrightarrow{\cdot\#\pi_1(K_2)}H^5(\Lambda^1;\IZ)\cong\IZ
\xrightarrow{\delta^5_{\Lambda^2,\Lambda^1}}
H^6(\Lambda^2,\Lambda^1;\IZ)\cong\IZ/\#\pi_1(K_2)\IZ
\xrightarrow{j_2^6}H^6(\Lambda^2;\IZ)\longrightarrow0.
\]
Thus $\delta^5_{\Lambda^2,\Lambda^1}$ is surjective, so $j_2^6=0$ and $H^6(\Lambda^2;\IZ)=0$.
\end{proof}

This allows us to prove Theorem \ref{thmoneorbittype}.

\begin{proof}[Proof of Theorem \ref{thmoneorbittype}]
Assume that the isotropy set of the geodesic flow is a single connected $3$-manifold $C$. Then $\Phi(K)=C$ for every singular critical manifold $K$. The lowest critical manifold $K_1$ consists of prime geodesics and is diffeomorphic to $C$. By Theorem \ref{coroddisoorder}, $K_2$ is singular as well, so
\[
\Phi(K_2)=\Phi(K_1),
\qquad
K_2=K_1^2.
\]
We compute $H^*_{\IS^1}(\Lambda M,M;\IZ)$ for this metric and compare it with Lemma \ref{lemequcohomgroups}. By Theorem \ref{thmcohomologycritmanifolds},
\begin{align*}
H^{*}_{\IS^1}(K_1;\IZ)&\cong
\begin{cases}
\IZ,&*=0,2,\\
0,&\textup{elsewhere},
\end{cases}\\[1mm]
H^{*}_{\IS^1}(K_2;\IZ)&\cong
\begin{cases}
\IZ,&*=0,\\
\IZ\oplus\IZ/2\IZ,&*=2,\\
\IZ/2\IZ\oplus\IZ/2\IZ,&*>2,\ *\equiv0\mod 2,\\
0,&\textup{elsewhere}.
\end{cases}
\end{align*}
The first three indices are
\[
\ind(K_1)=2,
\qquad \ind(K_2)=4,
\qquad \ind(K_3)=6.
\]
Hence the equivariant Thom isomorphism (\ref{subsevelthomisoequivariant}) gives
\begin{align*}
H^{*}_{\IS^1}(\Lambda^1,M;\IZ)&\cong
\begin{cases}
\IZ,&*=2,4,\\
0,&\textup{elsewhere},
\end{cases}\\
H^{*}_{\IS^1}(\Lambda^2,\Lambda^1;\IZ)&\cong
\begin{cases}
\IZ,&*=4,\\
\IZ\oplus\IZ/2\IZ,&*=6,\\
\IZ/2\IZ\oplus\IZ/2\IZ,&*>6,\ *\equiv0\mod 2,\\
0,&\textup{elsewhere}.
\end{cases}
\end{align*}
The long exact sequence of $(\Lambda^2,\Lambda^1,M)$ now yields
\begin{align*}
H^2_{\IS^1}(\Lambda^1,M;\IZ)&\cong\IZ,\\
H^4_{\IS^1}(\Lambda^2,M;\IZ)&\cong\IZ^2,\\
H^6_{\IS^1}(\Lambda^2,M;\IZ)&\cong\IZ\oplus\IZ/2\IZ,\\
H^{2*+1}_{\IS^1}(\Lambda^2,M;\IZ)&=0.
\end{align*}
The triple $(\Lambda^3,\Lambda^2,M)$ also gives
\[
H^3_{\IS^1}(\Lambda^3,M;\IZ)=0,
\qquad
H^4_{\IS^1}(\Lambda^3,M;\IZ)\cong\IZ^2.
\]
For $*\leq\ind(K_{i+1})-2$, the inclusion $\Lambda^i\hookrightarrow\Lambda M$ induces an isomorphism
\[
H^{*}_{\IS^1}(\Lambda M,M;\IZ)
\longrightarrow H^{*}_{\IS^1}(\Lambda^i,M;\IZ).
\]
In particular,
\[
H^6_{\IS^1}(\Lambda M,M;\IZ)
\cong H^6_{\IS^1}(\Lambda^3,M;\IZ).
\]

It remains to compute the latter group. Let $\delta_i$ be the connecting map in the relative Gysin sequence of the sublevel sets. As in the preceding proof the relative Gysin sequence and the long exact triple sequence give the commutative diagram with exact rows and columns
\begin{center}
\begin{tikzpicture}[commutative diagrams/every diagram]
\matrix[matrix of math nodes, name=m, column sep=0.9cm, row sep=0.7cm,
commutative diagrams/every cell, ampersand replacement=\&] {
0 \& H^5(\Lambda^3,M;\IZ) \& H^4_{\IS^1}(\Lambda^3,M;\IZ) \& H^6_{\IS^1}(\Lambda^3,M;\IZ) \& H^6(\Lambda^3,M;\IZ) \& 0 \\
0 \& H^5(\Lambda^2,M;\IZ) \& H^4_{\IS^1}(\Lambda^2,M;\IZ) \& H^6_{\IS^1}(\Lambda^2,M;\IZ) \& H^6(\Lambda^2,M;\IZ) \& 0 \\
};
\path[commutative diagrams/.cd, every arrow, every label]
(m-1-1) edge (m-1-2)
(m-1-2) edge node {$\delta_3$} (m-1-3)
(m-1-3) edge node {$\cup e(\Lambda^3)$} (m-1-4)
(m-1-4) edge node {} (m-1-5)
(m-1-5) edge (m-1-6)

(m-2-1) edge (m-2-2)
(m-2-2) edge node {$\delta_2$} (m-2-3)
(m-2-3) edge node {$\cup e(\Lambda^2)$} (m-2-4)
(m-2-4) edge node {} (m-2-5)
(m-2-5) edge (m-2-6)

(m-1-2) edge (m-2-2)
(m-1-3) edge (m-2-3)
(m-1-4) edge (m-2-4)
(m-1-5) edge (m-2-5);
\end{tikzpicture}
\end{center}
By Proposition \ref{firstnontrivialiso},
$H^5(\Lambda^3,M;\IZ)\to H^5(\Lambda^2,M;\IZ)$ is an isomorphism; exactness of the triple sequences gives the corresponding isomorphism in equivariant degree $4$. Since $K_2=K_1^2$, Lemma \ref{lemmadegfundcorrespondence} gives
\[
H^6(\Lambda^2;\IZ)=0,
\qquad\text{hence}\qquad
H^6(\Lambda^2,M;\IZ)=0.
\]
Therefore, exactly as in the proof of Lemma \ref{lemequcohomgroups},
\[
\coker(\delta_3)\cong\coker(\delta_2)
\cong H^6_{\IS^1}(\Lambda^2,M;\IZ)
\cong\IZ\oplus\IZ/2\IZ.
\]
Moreover, the long exact sequence of the triple $(M,\Lambda^2,\Lambda^3)$ gives $H^6(\Lambda^3,M;\IZ)\cong\IZ$. Thus
\[
0\longrightarrow\coker(\delta_3)
\longrightarrow H^6_{\IS^1}(\Lambda^3,M;\IZ)
\longrightarrow H^6(\Lambda^3,M;\IZ)
\longrightarrow0
\]
splits, and
\[
H^6_{\IS^1}(\Lambda^3,M;\IZ)
\cong(\IZ\oplus\IZ/2\IZ)\oplus\IZ.
\]
Hence $H^6_{\IS^1}(\Lambda M,M;\IZ)$ contains $2$-torsion, contradicting Lemma \ref{lemequcohomgroups}.
\end{proof}


\newpage















\printbibliography


\end{document}